\newtheorem{theorem}{Theorem}[section]
\newtheorem{lemma}[theorem]{Lemma}
\newtheorem{corollary}[theorem]{Corollary}
\newtheorem{proposition}[theorem]{Proposition}
\newtheorem{remark}[theorem]{Remark}
\theoremstyle{definition}
\newtheorem{definition}[theorem]{Definition}
\numberwithin{equation}{section}
\newcommand{\Tr}{\mathrm{Tr}}
\newcommand{\Ad}{\mathrm{Ad}}
\newcommand{\ad}{\mathrm{ad}}
\newcommand{\PW}{\mathrm{PW}}
\newcommand{\R}{\mathbb{R}}
\newcommand{\C}{\mathbb{C}}
\newcommand{\T}{\mathbb{T}}
\newcommand{\Z}{\mathbb{Z}}
\newcommand{\D}{\mathbb{D}}
\newcommand{\gL}{\Lambda} 
\newcommand{\supp}{\mathrm{supp}}
\newcommand{\eps}{\varepsilon}
\newcommand{\Exp}{\mathrm{Exp}}
\newcommand{\g}[1]{{\mathfrak{#1}}}
\newcommand{\im}{\mathrm{Im}}
\newcommand{\re}{\mathrm{Re}}
\newcommand{\wh}[1]{\widehat{#1}}
\newcommand{\wt}[1]{\widetilde{#1}}
\newcommand{\SO}{\mathrm{SO}}
\newcommand{\GL}{\mathrm{GL}}
\newcommand{\mS}{\mathcal{S}}
\newcommand{\dotcup}{\ensuremath{\mathaccent\cdot\cup}}
\newcommand{\bX}{\mathbf{X}}
\newcommand{\bY}{\mathbf{Y}}
\newcommand{\cL}{\mathcal{L}}
\newcommand{\cM}{\mathcal{M}}
\newcommand{\fa}{\mathfrak{a}}
\newcommand{\fb}{\mathfrak{b}}
\newcommand{\fg}{\mathfrak{g}}
\newcommand{\fh}{\mathfrak{h}}
\newcommand{\fk}{\mathfrak{k}}
\newcommand{\fn}{\mathfrak{n}}
\newcommand{\fq}{\mathfrak{q}}
\newcommand{\fs}{\mathfrak{s}}
\newcommand{\fu}{\mathfrak{u}}
\newcommand{\cD}{\mathcal{D}}
\newcommand{\cO}{\mathcal{O}}
\newcommand{\cS}{\mathcal{S}}
\def\sideremark#1{\ifvmode\leavevmode\fi\vadjust{\vbox to0pt{\vss
 \hbox to 0pt{\hskip\hsize\hskip1em
\vbox{\hsize2cm\tiny\raggedright\pretolerance10000 
 \noindent #1\hfill}\hss}\vbox to8pt{\vfil}\vss}}} 
\newcommand{\ipl}[2]{( #1,#2 )}
\newcommand{\ip}[2]{\langle #1,#2\rangle}
\begin{document}

\title[Paley-Wiener Theorem for Line Bundles]{Paley-Wiener Theorem for Line Bundles over Compact Symmetric Spaces
and New Estimates for the Heckman-Opdam Hypergeometric Functions}

\author{Vivian M. Ho}
\address{Department of Mathematics \\
Louisiana State University\\
Baton Rouge, Louisiana}
\email{vivian@math.lsu.edu}

\author{Gestur \'{O}lafsson}
\address{Department of Mathematics \\
Louisiana State University\\
Baton Rouge, Louisiana}
\email{olafsson@math.lsu.edu}
\begin{thanks}
{The research of G. \'Olafsson was supported by NSF grant DMS-1101337.} 
\end{thanks}

\subjclass[2010]{Primary 43A85, 33C67; Secondary 43A90, 53C35, 22E46}
\date{}
\keywords{Fourier transform; Hypergeometric function; Symmetric space; Paley-Wiener theorem}

\begin{abstract}
Paley-Wiener type theorems describe the image of a given space of
functions, often compactly supported functions, under an integral transform, usually a Fourier transform on
a group or homogeneous space. In this article we proved a Paley-Wiener theorem for smooth sections $f$ 
of homogeneous line bundles on a compact Riemannian symmetric space $U/K$. It characterizes $f$ 
with small support in terms of holomorphic extendability and exponential growth of their $\chi$-spherical Fourier transforms,
where $\chi$ is a character of $K$.

An important tool in our proof is a generalization of Opdam's estimate
for the hypergeometric functions associated to multiplicity functions that are not
necessarily positive. At the same time the radius of the domain where this estimate is valid is increased.
This is done in an appendix. 
\end{abstract}

\maketitle

\section{Introduction}
\label{introduction}

\subsection{Paley-Wiener Theorem}

\noindent
The classical Paley-Wiener theorem identifies the
space of smooth compactly supported functions on $\R^n$ with certain classes of holomorphic functions on $\C^n$ of
exponential growth  via the  Fourier transform on $\R^n$. The exponent is determined by the size of the support, 
see \cite[Theorem 7.3.1]{hoe90}.

There are several generalizations of this theorem to settings where $\R^n$ is replaced by a Lie group or a 
homogeneous space. Of all of those generalizations, the
Riemannian symmetric spaces are best understood, in particular, those  of the noncompact type due to
the work of Helgason \cite{h5, h6} and Gangolli
\cite{ga} for smooth functions, and Eguchi, Hashizume and Okamoto \cite{eho} and Dadok \cite{da} for 
distributions. The case of semisimple Lie groups was due to Arthur \cite{ar}, the case
of general reductive symmetric spaces was done by van den Ban and Schlichtkrull \cite{vs, vs1}, and the case of 
hyperbolic spaces was treated by Andersen \cite{an}. More recently the Paley-Wiener type theorems have been extended to 
the case of the Heckman-Opdam hypergeometric Fourier transform and
the compact settings, \cite{bop,bop2005b,go,op2004a,op2004b,op,os,os1,os2}
and even infinite dimensional Lie groups \cite{ow}. We refer to \cite{do2011} and references therein 
for overview and further discussions.
In the compact case every smooth function
has a compact support, so the Paley-Wiener theorem is a local
statement and   only
valid for functions supported in sufficiently small balls around the base point.

The first part of this article is  inspired by  \cite{os} which deals with functions
on compact symmetric spaces. We aim to generalize the results of \cite{os} to line bundles over
symmetric spaces of compact type. Similar results of noncompact type was treated in \cite{sh}.

Consider a Riemannian symmetric space $\bY =U/K$ of compact type. 
Let $\bX=G/K_o$ be the noncompact dual
symmetric space, $K_o$ the connected component
of $K$ containing the identity element (note that $K$ is connected
if $U$ is assumed to be simply connected). Let $\chi$ be a character of $K$. 
A homogeneous line bundle over $\bY$ is then given by the fiber-product
$U\times_\chi \C =: \cL_\chi$. The space of smooth sections on $\cL_\chi$ is isomorphic to the space
$C^\infty\, (\bY;\cL_\chi)$ of all smooth functions $f: U\to \C$ such that
\begin{equation}\label{eq:sections}
f(uk)=\chi (k)^{-1}f (u)\quad\text{ for all }\,
k\in K\text{ and  all } u\in U\, .
\end{equation}
Similarly, one defines the space of $L^2$-sections, $L^2(\bY;\cL_\chi)$. 
There is a natural unitary representation of $U$ on $L^2(\bY;\cL_\chi)$, the \textit{regular representation},
given by left translation
\[\lambda (g)f(u)=f(g^{-1}u)\, .\]
An irreducible representation of $U$ is said to be $\chi$-spherical if  there exists a nonzero vector
$e_\mu \in V_\mu$ such that
\begin{equation}\label{eq:chi-spher}
\pi_\mu (k)e_\mu =\chi (k)e_\mu\quad \text{ for all } k\in K\, .
\end{equation} 
As $U$ is compact the regular representation decomposes into direct sum of irreducible representations.
The representations that are contained in this sum are the
$\chi$-spherical representations and each is contained with multiplicity one, see \cite{sc}. The classification
of the $\chi$-spherical representations in terms of their highest weights can also be found in
\cite{sc}. In this article we give a slightly simpler description of this set, see  Proposition 
\ref{pro2}.

The replacement for the $K$-biinvariant functions studied in \cite{os} are
the $\chi$ bicovariant functions 
\begin{equation}\label{eq:shper} f(k_1uk_2)
 =\chi (k_1k_2)^{-1} f(u) \quad\text{ for all }\,
k_1,k_2\in K\text{ and  all } u\in U\, .
\end{equation}
This space is denoted by $C^\infty (U//K; \cL_\chi)$. The replacement for the spherical functions on $\bY$ are the
$\chi$-spherical functions $\psi_\mu$ given by
\begin{equation}\label{eq:spher}
\psi_\mu (u)=\ip{e_\mu}{\pi_\mu (u)e_\mu}
\end{equation}
and the $\chi$-spherical Fourier transform is given by
\begin{equation}\label{eq:FT}
\widehat{f}(\mu )=\ip{f}{\psi_\mu}=\int_U f(u)\psi_\mu (u^{-1})\, du\, .
\end{equation}

We investigate $\chi$-bicovariant sections $f$ supported in a closed metric ball $\overline{B}_r(x_o)$ of sufficiently 
small radius $r>0$ around a base point $x_o$. 
We show the $\chi$-spherical Fourier transform of $f$ extends to a holomorphic function of exponential type $r$ (it 
is exactly the size of the support of $f$). The image of $\chi$-spherical Fourier transform is certain space of holomorphic functions, 
see the Paley-Wiener theorem (Theorem \ref{thm1}). 



The proof of the Paley-Wiener theorem uses new estimates for the Heckman-Opdam hypergeometric functions.
This is the main content of the second part of this article. 


\subsection{The Hypergeometric Functions}
The Heckman-Opdam hypergeometric functions were introduced by a series of joint work of Heckman and Opdam \cite{hop, he, op1, op2}.
They are joint eigenfunctions of a commuting algebra of differential operators associated to
a root system and a multiplicity parameter (which is a Weyl group invariant function 
on the root system). The multiplicities can be arbitrary complex numbers. 
The hypergeometric functions are holomorphic, Weyl group invariant, and normalized by
 the value one at the identity.
When the root multiplicities do correspond to those of a Riemannian symmetric space, the hypergeometric functions are
simply the restrictions  of spherical functions to a Cartan subspace. 
We refer to \cite[Part I, Chapter $4$]{hs} and \cite[Section $3.2$]{sh} for an introduction to
the Harish-Chandra asymptotic expansion for the $\chi$-spherical functions on $G$.

The $\chi$-spherical functions on $G$ were identified as hypergeometric functions in \cite{hs}, but whose
multiplicity parameters are not necessarily positive. 
So far uniform exponential estimates on the growth
behavior of hypergeometric functions have only been proven with the condition that all multiplicities are positive \cite{op},
and hence not applicable in our situation. We therefore extend such results. Our estimate 
\begin{enumerate}
\item works for the hypergeometric functions whose multiplicity parameters are allowed to be certain 
negative numbers
\item the domain of the hypergeometric functions where this new estimate is valid is increased (
double the size of the original domain in \cite{op}).
\end{enumerate}
This is of crucial importance to obtain the right exponential type growth of $f$ under the $\chi$-spherical Fourier
transform (see the proof of part (1) of Theorem \ref{thm1}). The new estimate is proved in Proposition \ref{hypfun:estim}, Appendix \ref{hyper}.
The technique to prove it is inspired by ideas from \cite{op}. 

An alternative method to attack this problem is to apply some suitable shift operators 
in order to move negative multiplicities to positive ones. 
This allows us to use many well-known results of the hypergeometric functions associated with positive multiplicities. 
This method was investigated in \cite{ho}.

\subsection{Plan of the Article}
In Section \ref{preliminaries} we introduce basic notations and structure theory on
Riemannian symmetric spaces. In Section \ref{Fourier} and Section \ref{s:spheri} we discuss harmonic analysis related to line bundles over compact symmetric
spaces, including the theory of highest weights for $\chi$-spherical representations, elementary spherical functions 
of type $\chi$, and $\chi$-spherical Fourier transform. In Section \ref{s:PW} we define the relevant Paley-Wiener space and state the
main theorem (Theorem \ref{thm1}), to prove which we need some tools of differential operators (Sections \ref{s:holExt}) and
hypergeometric functions (Appendix \ref{hyper}). Sections \ref{s:PW} and \ref{s:bij} contains the main body of the proof.
In Section \ref{s:PW} we show the $\chi$-spherical Fourier transform maps into the Paley-Wiener space. In Section \ref{s:surj} we prove
the bijection of the $\chi$-spherical Fourier transform. The proof of injectivity is obvious, 
while the proof of surjectivity is by reduction to
the Paley-Wiener theorem for the group case, recalled in Section \ref{s:pwtgp}.

\tableofcontents 

\section{Notation and Preliminaries}
\label{preliminaries}
\noindent
The material in this section is standard. We refer to  \cite{h1} for references. We will
often need \cite{h2,h3} too. We use the notation from the introduction mostly
without reference.

\subsection{Symmetric spaces}

We recall some standard notations and facts related to symmetric spaces.

A Riemannian symmetric space of the compact type can be realized as $\bY = U / K$ where
$U$ is a connected semisimple compact Lie group and $K \subseteq U$ a closed symmetric subgroup.
Thus, there exists a nontrivial involution $\theta: U \to U$ such that
\[U_o^\theta \subseteq K \subseteq U^\theta=\{u\in U\mid \theta (u)=u\}\, .\] 
We fix the base point $x_o=eK$ and write $a\cdot (bK)=(ab)K$ for the action of $U$ on $\bY$.  
Assume $U$ is simply connected. Then $\bY$ is simply connected.
It is not necessary to assume that $\bY$ is simply connected, 
but it makes several arguments simpler. In particular, the classification of the $\chi$-spherical representation is simpler. 
Note that the spherical harmonic analysis on a general compact symmetric space can be reduced to the simply 
connected case (see \cite[p.4860]{opa} and \cite{os}).

Let $\g{u}$ be the Lie algebra of $U$. Then $\theta$ induces an involution on $\g{u}$, also denoted by $\theta$. 
Decompose $\g{u} = \g{k} \oplus \g{q}$ into $\pm 1$-eigenspaces of $\theta$, where $\g{k}=\g{u}^\theta$ is the
Lie algebra of $K$. We identify $\fq$ with the tangent space $T_{x_o}(\bY )$ of $\bY$ at $x_o$ by
\[D_Xf(x_o)=\left.\dfrac{d}{dt}\right|_{t=0}f(\Exp (tX) )\]
where $\Exp (Y)=\exp (Y)\cdot x_o$. Then $T\bY \cong \fq\times_K U$. 
Any positive $K$-invariant bilinear form on $\fq$ defines a
Riemannian metric on $\bY$. As an example, we let $\ip{\,\cdot\,}{\,\cdot\,}$ be the inner product on $\fu$ defined by
\[\ip{ X}{ Y } = - \Tr\, ( \ad(X)\ad(Y))\, .\] 
This inner product is $K$-invariant and defines a Riemannian metric on $\bY$.  
The inner product on $\g{u}$ gives an inner product on the
dual space $\g{u}^\ast$ in a canonical way, and by hermitian extension they induce
$U$-invariant inner products on 
$\g{u}_\C = \g{u} \otimes_\R \C $ and the complex dual space $\g{u}_\C^\ast$, all denoted by the
same symbol. We write $\|\lambda \|= \sqrt{\ip{\lambda}{\lambda}}$ for the corresponding norm.
Similar notations will be used for other Lie algebras and vector spaces.
The $\C$-bilinear extension to $\g{u}^\ast_\C$ will be denoted by
$\lambda ,\mu\mapsto \ipl{\lambda}{\mu}$. 

A maximal abelian subspace of $\fq$ is called a Cartan subspace for $\bY$ (or $(U,K)$).
All Cartan subspaces are $K$-conjugate and  their common dimension is
called  the rank of $\bY$.
>From now on we fix a Cartan subspace $\fb$. Let $n=\dim \fb$.  We fix a  Cartan subalgebra
$\g{h}$ of $\g{u}$ containing $\g{b}$. Then $\g{h}$ is $\theta$-stable and 
\begin{equation}
\label{eq20}
\g{h} = (\g{h} \cap \g{k}) \oplus \g{b}\, .
\end{equation}
Let $B=\exp (\g{b})$ be the analytic subgroup of $U$ with Lie algebra $\g{b}$.   
The subspace $B\cdot x_o\simeq B/B\cap K$ is a Cartan subgroup of $\bY$. Note that
$B\cap K$ is finite.

Since $U$ is compact, it admits a finite dimensional faithful unitary representation. Thus
$U\subset \mathrm{U}(p)\subset \GL (p,\C)$
for some $p$. As $\fu \subset \fu (p)$ it follows that $\fu \cap i \fu=\{0\}$ and
hence $\fu_\C\simeq \fu \oplus i\fu$. Let $U_\C$ denote the analytic subgroup
of $\GL (p,\C)$ with Lie algebra $\fu_\C$. Note that $U_\C$ is simply
connected. Let $\fs = i\fq$ and let $\fg=\fk\oplus \fs$. Note
that $\fg$ is a Lie algebra.  Denote
by $G$ the analytic subgroup of $\GL (p,\C)$ (and $U_\C$) with Lie algebra
$\g{g}$. Then $G=K\exp \g{s}$ and $\bX=G/K$ is a Riemannian symmetric space of the
noncompact type. It is called the Riemannian dual of $\bY$. The Riemannian structure on $\bX$ is again determined
by the inner product $\ip{X}{Y}=\Tr \, \ad (X)\ad (Y)$ on $\g{s}$.

We will from  now on view  $\bX$ and $\bY$ as real forms of the complex
homogeneous space $U_\C/K_\C$ where $K_\C=\exp \fk_\C\subset U_\C$. Again
$x_o=eK_\C$ is the common base point.

The involution
$\theta$ extends to a holomorphic involution on $U_\C$, also denoted by $\theta$. We
also write $\theta$ for the restriction to $G$ and note that $\theta$   is a Cartan
involution on $G$.   

Again, a maximal subspace of $\fs$ is called a Cartan subspace of $\bX$. The Cartan subspaces of $\bX$ are conjugate under $K$. We fix from now on the Cartan subspace
$\fa=i\fb$ where $\fb$ is the fixed Cartan subspace for $\bY$. Then
$A\simeq \g{a}$ is simply connected.  We have $\g{a}_\C =\g{b}_\C = \g{a}\oplus \g{b}$ and
$A_\C=\exp (\g{a}_\C) =AB$. We denote by $\log : A_\C\to \g{a}_\C$
the multivariable inverse of $\exp|_{\g{a}_\C}$.  It is a single valued isomorphism $\log :A \to \g{a}$.

\subsection{Line bundles on hermitian symmetric spaces}

We will from now on assume that $U$ is simple and that there exists nontrivial line bundles over $\bY$
although some of our statements are true in general.
Let $\chi : K\to \mathbb{T}$ be a character. Then the homogeneous line bundle $\cL_\chi\to \bY$ is defined as
\[\cL_\chi=U \times_\chi \C = (U \times \C) / K\]
where $\C$ denotes the complex numbers  with the
action $k\cdot z=\chi (k)^{-1}\, z$ and $K$ acts on $U\times \C_\chi$ by $(u,z)\cdot k=(uk,k\cdot z)$.
The existence of a homogeneous line bundle over $\bY$ is equivalent to the
existence of a nontrivial character which in turn is equivalent to $\dim Z(K)= 1$, 
where $Z(K)$ is the center of $K$. Let $\g{z}$ denote the center of $\g{k}$ and $\g{k}_1=[\g{k}, \g{k}]$. Then
$\dim \g{z}=1$ and $\g{k}=\g{z}\oplus \g{k}_1$. Let $K_1$ denote the analytic subgroup of $K$ (and $U$) with
Lie algebra $\g{k}_1$. Then $K_1$ is closed and $K=Z(K)_oK_1$. The spaces $\bX$ and $\bY$ are
hermitian symmetric spaces of the noncompact and compact type respectively. They are complex homogeneous
spaces where the complex structure is given by the adjoint action of a central element in $\g{k}$ with
eigenvalues $0$ and $\pm i$.

Up to coverings the irreducible spaces with $\dim \g{z}=1$ are given in the following table  (cf. \cite[p.516, 518]{h1}). 
Here $n=\dim \fb=\dim \fa$ is the
rank of $\bX$ and $\bY$ and $d = \dim_\R \bY=\dim_\R \bX$. The conditions listed in the last column
are given to prevent coincidence between different classes due to lower dimensional isomorphisms.

{\small
\begin{equation}\label{t1}
\centering
\begin{tabular}[c]{|c||c|c|c|c|c|c|c|}
\hline
\multicolumn{8}{|c|}{}\\
\multicolumn{8}{|c|}{The Hermitian symmetric spaces}\\
\multicolumn{8}{|c|}{}\\
\hline\hline
& class & $G$ & $U$ & $K$ & $n$ & $d$ & \\
\hline
\hline
$1$ & $A III$ & $\mathrm{SU}\, (p,\, q)$ & $\mathrm{SU}\, (p + q)$ & $\mathrm{S}\, (\mathrm{U} (p) \times \mathrm{U} (q) )$ &  $p$ & 
$2 p q$ & $q \geq p \geq 1$\\
\hline
$2$ & $BD I$ & $\SO_o\, (p,\, q)$ & $\SO\, (p+q)$ & $\SO\, (p) \times \SO\; (q)$ & $p$ & $p q$
& $p=2$, $q \geq 5$ \\
\hline
$3$ & $D III$ & $\SO^\ast\, (2\, j)$ & $\SO\, (2\, j)$ &  $\mathrm{U}\, (j)$ & $[\frac{1}{2}\, j]$ & $j (j-1)$ & $j \geq 5$\\
\hline
$4$ & $C I$ & $\mathrm{Sp}\, (j,\, \R)$ & $\mathrm{Sp}\, (j)$ & $\mathrm{U}\, (j)$ & $j$ & $j (j + 1)$ & $j \geq 2$ \\
\hline
$5$ & $E III$ & $\g{e}_{6\, (- 14)}$ & $\g{e}_{6\, (- 78)}$ & $\g{s o}\, (10) + \R$ & $2$ & $32$ &\\
\hline
$6$ & $E VII$ & $\g{e}_{7\, (- 25)}$ & $\g{e}_{7\, (- 133)}$ & $\g{e}_6 + \R$ & $3$ & $54$ &\\
\hline
\end{tabular}
\end{equation}
}

\begin{remark}{\rm 
 In Case $1$, 
$$\bY = \mathrm{SU}\, (p + q) / \mathrm{S}\, (\mathrm{U} (p) \times \mathrm{U} (q))$$ 
is the complex Grassmann manifold of $p$-dimensional subspaces in $\C^{p+q}$. In Case $2$, 
$$\bY = \SO\, (p+q) / (\SO\, (p) \times \SO\, (q))$$ 
is a covering of $\SO\, (p+q) / \mathrm{S}\, (\mathrm{O}\, (p) \times \mathrm{O}\, (q))$, 
the real Grassmann manifold of $p$-dimensional subspaces in $\R^{p+q}$. }
\end{remark}

We recall the parametrization of the group of characters given in \cite{sc}. Fix
$Z\in \g{z}$ as in \cite[p.283, (3.1)]{sc}. Thus  $\exp (t Z) \in Z (K)$ for all $t\in \R$, and
$\exp (t Z) \in K_1$ if and only if $t \in 2 \pi \Z$. 

\begin{proposition}[H. Schlichtkrull]
\label{pro0}
Let $l \in \Z$. Define $\chi_l: K \to \mathbb{T}$ by
\[ \chi_l (\exp (tZ)k)=e^{ilt}\, ,\; t\in \R \text{ and } k\in K_1\, .\] 
Then $\chi_l$ is a well defined character on $K$. If $\chi$ is a character on  $K$, then there is a unique $l \in \Z$ such that 
$\chi = \chi_l$.
\end{proposition}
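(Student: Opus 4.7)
The plan is to prove three things in turn: $\chi_l$ is well defined, $\chi_l$ is a group homomorphism (hence a character since continuity is automatic), and every character of $K$ arises uniquely this way. Since $K=Z(K)_o K_1$ was established above the statement, every element of $K$ can be written as $\exp(tZ)k$ with $t\in\R$ and $k\in K_1$, so the formula makes sense as soon as it is independent of the representation. Suppose $\exp(t_1Z)k_1=\exp(t_2Z)k_2$ with $k_1,k_2\in K_1$. Then $\exp((t_1-t_2)Z)=k_2k_1^{-1}\in K_1$, so the hypothesis on $Z$ forces $t_1-t_2\in 2\pi\Z$, hence $e^{ilt_1}=e^{ilt_2}$ because $l\in\Z$. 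For the homomorphism property, use that $Z\in\fz$ implies $\exp(sZ)$ commutes with every element of $K$; thus $\exp(t_1Z)k_1\cdot\exp(t_2Z)k_2=\exp((t_1+t_2)Z)k_1k_2$ with $k_1k_2\in K_1$, and multiplicativity of $\chi_l$ follows.

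For the surjectivity onto the character group, let $\chi:K\to\T$ be an arbitrary character. Since $\T$ is abelian, $\chi$ is trivial on the commutator subgroup $[K,K]$. Here I would use that $\fk_1=[\fk,\fk]$ is compact semisimple, so the analytic subgroup $K_1$ is closed and satisfies $K_1=[K_1,K_1]\subseteq[K,K]$; a dimension/Lie-algebra comparison gives $K_1=[K,K]$. Consequently $\chi$ is determined by its restriction to the one-parameter subgroup $\{\exp(tZ):t\in\R\}\supseteq Z(K)_o$, and any continuous homomorphism $\R\to\T$ has the form $t\mapsto e^{ict}$ for some $c\in\R$. The constraint $\exp(2\pi Z)\in K_1$ (from the hypothesis on $Z$) forces $e^{2\pi ic}=1$, so $c=:l$ lies in $\Z$, and then $\chi=\chi_l$ by construction. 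Uniqueness of $l$ is immediate from differentiating the identity $e^{ilt}=e^{il't}$ at $t=0$.

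There is no real obstacle in the argument; the one step that deserves a little care is identifying $K_1$ with the (algebraic) commutator subgroup $[K,K]$, since a priori $K_1$ is only defined as an analytic subgroup via its Lie algebra. Compactness and semisimplicity of $\fk_1$ are what makes this identification automatic and allow one to conclude that every character of $K$ is trivial on $K_1$, reducing the classification of characters to the classification of characters of the circle $Z(K)_o/(Z(K)_o\cap K_1)$, which is precisely what the parametrization $l\in\Z$ records.
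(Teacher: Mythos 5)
Your argument is correct and complete. Note, however, that the paper does not actually prove this proposition: it simply cites Proposition 3.4 of Schlichtkrull's paper \cite{sc} and the comment following it, so there is no in-paper proof to compare against line by line. What you have written is a self-contained verification of exactly the facts that citation encodes: well-definedness and the homomorphism property follow from the normalization of $Z$ (namely $\exp(tZ)\in K_1$ iff $t\in 2\pi\Z$) together with the centrality of $Z$ and the decomposition $K=Z(K)_oK_1$; surjectivity follows because $K_1$, being the analytic subgroup of the compact semisimple algebra $\fk_1=[\fk,\fk]$, is perfect and hence contained in $[K,K]$, so every character factors through $K/K_1$, a circle whose characters are indexed by $\Z$ via the $2\pi$-periodicity of $\exp(tZ)$ modulo $K_1$. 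The one step you rightly flag --- identifying the analytic subgroup $K_1$ with the commutator subgroup, or at least establishing $K_1=[K_1,K_1]\subseteq[K,K]$ --- is indeed the only nontrivial input, and it is standard for connected compact semisimple groups (Weyl's theorem gives compactness, hence closedness, of $K_1$ as well, which the paper asserts without proof). So your proof is a legitimate and arguably useful expansion of what the paper leaves to the reference.
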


\begin{proof}
See Proposition 3.4 in \cite{sc} and its following comment.
\end{proof}

Since all one dimensional representations $\chi$ of $K$ have this form, hereafter, we parametrize $\chi = \chi_l$ for
$l \in \Z$. If $l = 0$, then $\chi_0$ is trivial.

\subsection{Root structures and the Weyl group}  
For $\alpha \in \g{h}_{\C}^\ast$ let
$$\g{u}_{\C,\, \alpha} = \{X \in \g{u}_\C\; |\; (\forall\, H \in \g{h}_\C)\; [H,\, X] = \alpha\, (H)\, X\}.$$
If $\alpha \ne 0$ and $\g{u}_{\C,\, \alpha}\not=\{0\}$ then $\alpha $ is said to be a root. We write  $\Delta = \Delta\, (\g{u},\, \g{h})$
for the set of roots. Similarly we define $\g{u}_{\C,\, \beta}$ for $\beta \in \g{b}_\C^\ast$ and
write $\Sigma=\Sigma (\g{u},\g{b})$ for the set of (restricted) roots. Note that 
$\Delta \subset i \g{h}^\ast$, $\Sigma\subset i\fb^*$, and $\Sigma = \Delta|_{\fb}\setminus \{0\}$. 
The numbers  
\[m_\beta=\dim_\C \fu_{\C,\beta}=\# \{\alpha \in \Delta \mid \alpha|_{\fb}=\beta\},\quad \beta \in \Sigma\]
are called multiplicities. Also note that $\fu_{\C,\alpha}\cap \fu=\{0\}$ for all $\alpha \in \Delta\cup \Sigma$.

Similarly, we can define the roots of $\g{a}$ in $\g{g}$ and we have
$\Sigma = \Sigma (\g{g},\g{a})$. We have $\fu_{\C,\beta}=\fg_{\beta}\oplus i\fg_{\beta}$ and $m_\beta=\dim_\R \fg_\beta$ for all $\beta \in \Sigma$. Working with roots it is therefore more convenient
to work with $\fg$ and $\fa$ rather than the pair $\fu$ and $\fb$.

An element $X \in \g{a}$ is called regular if $\alpha (X) \ne 0$ for all $\alpha \in \Sigma$. 
The subset $\g{a}^{\mathrm{reg}} \subset \g{a}$ is dense and is a finite union of open cones called
Weyl chambers. We fix a Weyl chamber $\g{a}^+$ and let
\[\Sigma^+ :=\{\alpha \in\Sigma  \mid (\forall H\in \fa^+)\,\, \alpha (H)>0\}\,. \]
We choose a positive system $\Delta^+$ in $\Delta$ 
such that if $\alpha\in\Delta^+$ and $\alpha|_{\fa}\not= 0$ then $\alpha|_{\fa}\in \Sigma^+$. 
Let $\Delta_0 = \{\alpha \in \Delta\, \mid\, \alpha |_\fa = 0\}$ and $\Delta_0^+ = \Delta_0 \cap \Delta^+$. Let
\begin{equation}\label{eq:rho}
\rho = \frac{1}{2}\sum_{\alpha\in\Sigma^+} m_\alpha \alpha \quad \text{ and }\quad
\rho_\fh =\frac{1}{2}\sum_{\beta\in\Delta^+} \beta\, .
\end{equation}
Let $\rho_0=\sum_{\alpha\in\Delta_0^+}\alpha$. Then $\rho_0\in i(\fh\cap \fk)^*$ and
\begin{equation}\label{eq:resRho}
\rho_\fh |_{\fa}=\rho\quad \text{ and }\quad \rho_\fh=\rho +\rho_0\, .
\end{equation}

If $\alpha \in \Sigma$ then it can happen that either $\alpha / 2 \in \Sigma$ or $2\alpha \in \Sigma$, but not both.
A root $\alpha
\in \Sigma$ is said to be unmultiplicable if $2 \alpha \notin \Sigma$ and indivisible if $\alpha / 2 \notin \Sigma$. Denote by
\begin{equation}\label{eqSast}
\Sigma_\ast = \{\alpha \in \Sigma \mid 2 \alpha \notin \Sigma\}\,\quad\text{ and }\quad \Sigma_{\text{i}} = \{\alpha \in \Sigma \mid
\frac{1}{2}\, \alpha \notin \Sigma\}\, .
\end{equation}
Both $\Sigma_\ast$ and $\Sigma_i$ are reduced root systems. Set $\Sigma_\ast^+ = \Sigma_\ast \cap \Sigma^+$. Note that $\bY$ is
irreducible if and only if $\Sigma_i$ is irreducible. Let $\Pi = \{\beta_j\}_{j = 1}^n$ be the
fundamental system of simple roots in $\Sigma^+_\ast$. 
For any $\lambda \in \g{a}_{\C}^\ast$ and $\alpha \in  \g{a}^\ast$ with $\alpha \ne 0$, define
$$\lambda_\alpha := \frac{\langle \lambda,\; \alpha \rangle}{\langle \alpha,\; \alpha \rangle}\, .$$
We will use similar notation for $i\fh^*$ without comments.
Note that $2\, \lambda_\alpha = \lambda_{\alpha / 2}$.
Define  $\omega_j\in\fa^\ast$, $j=1,\ldots ,n$, by
\begin{equation}
\label{eq6}
(\omega_{j})_{\beta_i} = \delta_{i, j},\quad 1 \leq i,\, j \leq n.
\end{equation}
The weights $\omega_j$ are the class $1$ fundamental weights for $(\g{u},\, \g{k})$ and $(\fg,\fk)$.
We let
\begin{eqnarray}
\label{eq1_6}
\Lambda^+_0=\{\lambda\in \fa^*\mid (\forall \alpha \in \Sigma^+)\,\, \lambda_\alpha\in\Z^+\}
= \sum_{j=1}^n\Z^+\omega_j\, .
\end{eqnarray}

For $\alpha \in \Sigma$ define the reflection $r_\alpha : \fa^*\to \fa^*$ by 
$$r_\alpha\, (\lambda) := \lambda - 2\, \lambda_\alpha\, \alpha,\; \forall\, \lambda \in \, \g{a}^\ast\, .$$ 
The group $W=W(\Sigma )$ generated by $r_\alpha$, $\alpha \in\Sigma$, is finite and
called   the Weyl group associated to $\Sigma$. 
Note that $W=W (\Sigma_\ast)=W (\Sigma_{\text{i}})$ with the obvious notation. 
The $W$-action extends to $\g{a}$ by duality, and then to $\g{a}_\C$ and
$\g{a}_{\C}^\ast$ by $\C$-linearity, and to $A$ and $A_\C$ by $w\cdot \exp (H)
= \exp (w(H))$. This action can be written as $w\cdot b=kbk^{-1}$ where $k\in N_K(\fa)$
is such that $\Ad (k)|_{\fa}=w$. 
The group $W$ then acts  on functions $f$ on any of these spaces by $(w\cdot f) (x) :=
 f (w^{-1}\cdot x)$, $w \in W$.  We recall that  $W\cdot \g{a}^+ = 
\g{a}^{\mathrm{reg}}$.  

We now describe the root structures for the special case of irreducible hermitian symmetric spaces
in more details.  Fix an orthogonal base $\{\eps_1,\ldots ,\eps_n\}$ for $\fa^*=i\fb^*$. 
Then we have the following description of the root system $\Sigma$, see
Moore \cite[Theorem 5.2]{mo},  \cite[p.528, 532]{h1}: 

\begin{theorem} 
\label{thm5} 
There are two possibilities for the root system $\Sigma^+$:
\begin{eqnarray*}
\text{Case I:} & & \Sigma^+ = \{\eps_j \pm \eps_i\, (1 \leq i < j \leq n),\; 2\, \eps_j\, (1 \leq
j \leq n)\}\\
\text{Case II:} & & \Sigma^+ = \{\eps_j\, (1 \leq j \leq n),\, \eps_j \pm \eps_i\, (1 \leq i < j \leq n),\, 
2\, \eps_j\, (1 \leq j \leq n) \}.
\end{eqnarray*}
\end{theorem}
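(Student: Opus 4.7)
The plan is to exploit the Hermitian structure to pin down the restricted root system via Harish-Chandra's construction of strongly orthogonal roots, and then rule out everything that falls outside the $C_n$/$BC_n$ templates.

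First, I would use the central element $Z \in \fz$ defining the complex structure to decompose $\fg_\C = \fk_\C \oplus \mathfrak{p}^+ \oplus \mathfrak{p}^-$ into the $0, \pm i$ eigenspaces of $\ad Z$. Since $\fz$ lies in every maximal torus of $\fk$, we may assume $Z \in \fh\cap\fk$, so this decomposition is $\fh_\C$-stable and $\Delta$ splits as a disjoint union $\Delta = \Delta_c \cup \Delta_n^+ \cup \Delta_n^-$ of compact roots (those trivial on $Z$) and noncompact positive/negative roots (those with value $\pm i$ on $Z$). Choose the positive system $\Delta^+$ so that $\Delta_n^+ \subset \Delta^+$, compatibly with $\Sigma^+$.

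Next, I would construct Harish-Chandra's strongly orthogonal sequence $\gamma_1, \ldots, \gamma_n \in \Delta_n^+$: let $\gamma_1$ be the maximal noncompact positive root, and inductively let $\gamma_{j+1}$ be the maximal noncompact positive root strongly orthogonal to $\gamma_1, \ldots, \gamma_j$. Irreducibility of $\bY$ guarantees this sequence has length exactly $n = \rank \bY$. After a Cayley transform identifying $\fa$ with the span of the vectors $i(E_{\gamma_j} - E_{-\gamma_j})$, set $\varepsilon_j := \tfrac{1}{2}\gamma_j|_\fa$ via this identification. Then $\{\varepsilon_j\}$ is an orthogonal basis of $\fa^*$, and each $2\varepsilon_j = \gamma_j|_\fa$ lies in $\Sigma^+$, producing the long roots common to both cases.

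To locate the remaining roots, I would invoke Moore's restriction lemma: strong orthogonality of the $\gamma_j$, together with $[\mathfrak{p}^+,\mathfrak{p}^+]=0$, forces any root $\alpha \in \Delta$, when restricted and expressed in the $\varepsilon_j$ basis, to have coefficients in $\{0, \pm\tfrac{1}{2}, \pm 1\}$ with at most two of them nonzero. This Hermitian constraint limits $\alpha|_\fa$ to the shapes $\pm\varepsilon_j \pm \varepsilon_i$ (with $i < j$), $\pm\varepsilon_j$, or $\pm 2\varepsilon_j$. Compact roots satisfy $\alpha(Z) = 0$, so they cannot restrict to $\pm 2\varepsilon_j$. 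The medium roots $\varepsilon_j \pm \varepsilon_i$ for $i < j$ must all appear by $W$-invariance, since reflections in the long roots $2\varepsilon_j$ permute the $\varepsilon_i$, and irreducibility prevents the middle roots from vanishing. The only remaining freedom is whether the short roots $\pm\varepsilon_j$ occur, which bifurcates into Case I ($C_n$, tube type) and Case II ($BC_n$, non-tube type).

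The main obstacle I anticipate is the ``at most two nonzero coefficients'' part of Moore's lemma, which rests on delicate commutator relations between $\ad H_{\gamma_j}$ and the Cayley-transformed root vectors in $\mathfrak{p}^\pm$; this is the technical core of Moore's original argument. The remaining arguments, namely that the $\varepsilon_j \pm \varepsilon_i$ indeed all appear and that both cases are realized by entries of the classification table \eqref{t1}, reduce to routine Weyl-group orbit considerations together with a case-by-case inspection of the six irreducible Hermitian types.
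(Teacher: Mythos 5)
The paper does not prove Theorem \ref{thm5} at all: it is stated as a known classification result and attributed directly to Moore \cite[Theorem 5.2]{mo} and to Helgason \cite[p.528, 532]{h1}. What you have written is essentially a reconstruction of the argument contained in those references --- the Harish-Chandra decomposition $\fg_\C = \fk_\C \oplus \mathfrak{p}^+ \oplus \mathfrak{p}^-$, the strongly orthogonal sequence $\gamma_1,\dots,\gamma_n$, the Cayley transform identifying $\fa$ with the span of the $i(E_{\gamma_j}-E_{-\gamma_j})$, and Moore's restriction lemma bounding the coefficients of $\alpha|_\fa$ in the $\varepsilon_j$-basis. So your route is the ``same'' as the paper's in the only sense available: it is the proof the paper outsources. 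You are also candid that the technical core (the ``coefficients in $\{0,\pm\tfrac12,\pm1\}$ with at most two nonzero'' lemma) is invoked rather than proved, which is a fair division of labor for a result the paper itself only cites.

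One step as written is wrong, though easily repaired. You claim the medium roots $\varepsilon_j\pm\varepsilon_i$ must all appear ``by $W$-invariance, since reflections in the long roots $2\varepsilon_j$ permute the $\varepsilon_i$.'' They do not: $r_{2\varepsilon_j}$ negates $\varepsilon_j$ and fixes every $\varepsilon_i$ with $i\neq j$, so the long roots alone generate only the sign changes, and the permutations of the $\varepsilon_i$ are produced precisely by the reflections in the medium roots whose existence you are trying to establish --- the argument as stated is circular. The correct route is via irreducibility: if no root restricted to $\pm\varepsilon_i\pm\varepsilon_j$ for some pair $i\neq j$, then (given Moore's coefficient constraint) $\Sigma$ would split into mutually orthogonal pieces supported on disjoint subsets of the $\varepsilon_j$, contradicting irreducibility of $\Sigma_i$; once one medium root exists, transitivity of the resulting Weyl group on the $\gamma_j|_\fa$ (equivalently, Moore's $K$-conjugacy argument) yields all of them with equal multiplicity. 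Similarly, your one-line claim that compact roots cannot restrict to $\pm 2\varepsilon_j$ because $\alpha(Z)=0$ does not by itself settle the matter and is anyway not needed for the shape of $\Sigma^+$, only for the multiplicity $m_l$.
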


\begin{remark} \rm
In Case II, $\Sigma$ is of type $BC_n$. Since $\bY$ is irreducible, $\Sigma = \mathcal{O}_s\, \dotcup\, \mathcal{O}_m\, \dotcup\, \mathcal{O}_l$ 
is a disjoint union of three $W$-orbits in $\Sigma$ corresponding to short, medium, and long roots, respectively.
Let $\mathcal{O}_s^+ = \mathcal{O}_s \cap \Sigma^+$, $\mathcal{O}_m^+ = \mathcal{O}_m \cap \Sigma^+$, and 
$\mathcal{O}_l^+ = \mathcal{O}_l \cap \Sigma^+$. Thus, 
\[\mathcal{O}_s^+ = \{\eps_j\, (1 \leq j \leq n)\},\quad \mathcal{O}_m^+ = \{\eps_j \pm \eps_i\, (1 \leq i < j \leq n)\}, \quad 
\mathcal{O}_l^+ = \{2\, \eps_j\, (1 \leq j \leq n)\}.\]
Adopt the notation 
$$m = (m_\alpha) = (m_s,\; m_m,\; m_l)$$
for root multiplicities of short, medium, and long roots, respectively, where
\[m_s = m_{\eps_i},\quad m_m = m_{\eps_j \pm \eps_i}\; (i \ne j), \quad m_l = m_{2\, \eps_i}.\]
In Case I, $\Sigma$ is actually of type $C_n$. We consider it as being of type $BC_n$ with $m_s = 0$. 
In this way, the root system $\Sigma$ is of type $BC_n$ in both cases.  
\end{remark}
 
The individual cases from Table (\ref{t1}) are listed in the following table:  

{\small \begin{equation}\label{t2}
\centering
\begin{tabular}[c]{|c||c|c|c|c|}
\hline
\multicolumn{5}{|c|}{ } \\
\multicolumn{5}{|c|}{Hermitian symmetric spaces $U / K$: Root structures and multiplicities}\\
\multicolumn{5}{|c|}{ } \\
\hline \hline
& $U$ & $K$ & $\Sigma$ & $(m_s,\, m_m,\, m_l)$\\
\hline
$1$ & $\mathrm{SU}\, (p + q)$ & $\mathrm{S}\, (\mathrm{U}_p \times \mathrm{U}_q)$ & 
$\begin{matrix}
\text{case I} & p = q\\
\text{case II} & q > p
\end{matrix}$ & 
$\begin{matrix}
(0,\, 2,\, 1)\\
(2\, (q-p),\, 2,\, 1)
\end{matrix}$ \\
\hline
$2$ & $\SO\, (2+q)$ & $\SO\, (2) \times \SO\, (q)$ & case I & $(0,\, q - 2,\, 1)$\\
\hline
$3$ & $\SO\, (2\, j)$ &  $\mathrm{U}\, (j)$ & 
$\begin{matrix}
\text{case I} & \text{$j$ is even}\\
\text{case II} & \text{$j$ is odd}
\end{matrix}$ & $\begin{matrix}
(0,\, 4,\, 1)\\
(4,\, 4,\, 1)
\end{matrix}$\\
\hline
$4$ & $\mathrm{Sp}\, (j)$ & $\mathrm{U}\, (j)$ & case I & $(0,\, 1,\, 1)$\\
\hline
$5$ & $\g{e}_{6\, (- 78)}$ & $\g{s o}\, (10) + \R$ & case II & $(8,\, 6,\, 1)$\\
\hline
$6$ & $\g{e}_{7\, (- 133)}$ & $\g{e}_6 + \R$ & case I & $(0,\, 8,\, 1)$\\
\hline
\end{tabular}
 \end{equation} }

\subsection{Basic structure theory}
Let $\g{n} = \oplus_{\alpha \in \Sigma^+}\, \g{g}_\alpha$. Then $\fn$ is a nilpotent Lie algebra and
 $\fg=\fk\oplus \fa\oplus \fn$.
Let $N = \exp\, \g{n}$ be the
analytic subgroup of $G$ with Lie algebra $\g{n}$. Then $N$ is nilpotent, simply
connected and closed.  The group $G$ is analytically diffeomorphic to $  K \times A \times N$ via the
multiplication map
$K \times A \times N \ni (k, a, n) \mapsto k a n \in G$. The inverse is denoted by
$x\mapsto (\kappa (x),a(x),n(x))$. We write $H(x)=\log (a(x))$. This is the Iwasawa decomposition
of $G$.  Furthermore $K_\C A_\C N_\C$ is open and dense in $G_\C$.

For the compact group $U$ we have the Cartan decomposition $U = K B K$. We have then the corresponding integral 
formulas for the Lie group $U$.

\begin{lemma}\label{le:int1} Let $\delta (\exp H)=\prod_{\alpha \in\Sigma^+}\left|\sin\left(\frac{1}{i}\alpha (H)\right)\right|^{m_\alpha}$
for $H \in \fb$. Then there exists a constant $c>0$ such that for all $f\in L^1(\bY)$
\[\int_{\bY} f(y)\, dy=c\int_K\, \int_{B} f(ka \cdot x_o)\delta (a)\, da\, d k\, .\] 
\end{lemma}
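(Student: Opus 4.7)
The plan is to establish the formula as the change-of-variable identity for the Cartan (polar) decomposition $U = KBK$, mirroring the classical integration formula on noncompact Riemannian symmetric spaces (\cite{h2}). The sine factors replace the usual hyperbolic sines because $\fb = i\fa$.

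First I consider the smooth map
\[\Phi : K \times \fb \to \bY, \qquad \Phi(k, H) = k\Exp(H).\]
By $U = KBK$ and $K \cdot x_o = x_o$, every point of $\bY$ lies in the image of $\Phi$. To compute the Jacobian at a point $(e, H_0)$ with $H_0 \in \fb^{\mathrm{reg}}$, I identify $T_{\Exp(H_0)} \bY$ with $\fq$ via left translation by $\exp(-H_0)$. A short Baker--Campbell--Hausdorff computation then yields
\[d\Phi_{(e, H_0)}(X, Y) = \mathrm{pr}_\fq\bigl(\Ad(\exp(-H_0))\, X\bigr) + Y, \qquad X \in \fk,\ Y \in \fb,\]
where $\mathrm{pr}_\fq : \fu \to \fq$ is orthogonal projection along $\fk$.

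Next I decompose $\fk = \fm \oplus \fm^\perp$ with $\fm = Z_\fk(\fb)$. Since $[\fm, \fb] = 0$, $\Ad(\exp(-H_0))$ fixes $\fm$ pointwise, hence $\mathrm{pr}_\fq$ annihilates it; thus $\fm \oplus \{0\}$ lies in $\ker d\Phi$ and $\Phi$ descends to a map on $(K/M) \times \fb$. For each $\alpha \in \Sigma^+$ set $\fu_\alpha := (\fu_{\C, \alpha} \oplus \fu_{\C, -\alpha}) \cap \fu$ and decompose $\fu_\alpha = \fk_\alpha \oplus \fq_\alpha$ under $\theta$, with $\dim \fk_\alpha = \dim \fq_\alpha = m_\alpha$. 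One has orthogonal direct sums
\[\fm^\perp = \bigoplus_{\alpha \in \Sigma^+} \fk_\alpha, \qquad \fb^\perp \cap \fq = \bigoplus_{\alpha \in \Sigma^+} \fq_\alpha.\]
Choosing paired real bases $(X_\alpha^{(j)}, Y_\alpha^{(j)})_{j=1}^{m_\alpha}$ of $\fk_\alpha$ and $\fq_\alpha$ arising from real and imaginary parts of complex eigenvectors in $\fu_{\C, \pm\alpha}$, and using that $\Ad(\exp(-H_0))$ acts on $\fu_{\C, \pm\alpha}$ by $e^{\mp \alpha(H_0)}$ with $\alpha(H_0) \in i\R$, one obtains
\[\mathrm{pr}_\fq\bigl(\Ad(\exp(-H_0))\, X_\alpha^{(j)}\bigr) = \sin\!\left(\tfrac{1}{i}\alpha(H_0)\right) Y_\alpha^{(j)}.\]

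Consequently the absolute Jacobian of $\Phi$ restricted to $\fm^\perp \oplus \fb$ at $(e, H_0)$ equals $\delta(\exp H_0)$, and $K$-equivariance propagates this to every $(k, H_0)$. Pulling the $U$-invariant measure on $\bY$ back through $\Phi$, integrating out the $\fm$ direction, using the surjection $\exp : \fb \to B$, and absorbing the degree of the covering $(K/M) \times B^{\mathrm{reg}} \to \bY^{\mathrm{reg}}$ (including the $W$-action on $B$) into a single positive constant $c$, yields the stated formula for every $f \in L^1(\bY)$. The principal obstacle is the third step: producing simultaneous bases $(X_\alpha^{(j)}, Y_\alpha^{(j)})$ that respect both the complexified root space decomposition and the $\theta$-decomposition $\fu = \fk \oplus \fq$, so that $\Ad(\exp(-H_0))$ yields a clean $\sin(\alpha(H_0)/i)$ factor; this is standard but requires care when nonreduced restricted roots of type $BC_n$ are present.
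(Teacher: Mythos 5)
Your argument is correct: the Jacobian computation for the polar map $(k,H)\mapsto k\Exp(H)$, with the root-space pairing $\fk_\alpha\leftrightarrow\fq_\alpha$ producing the factor $\left|\sin\left(\tfrac{1}{i}\alpha(H)\right)\right|^{m_\alpha}$, is exactly the standard derivation of the $KBK$ integration formula, and the covering multiplicities are legitimately absorbed into the constant $c$. The paper offers no proof of this lemma at all --- it simply records it as the integral formula attached to the Cartan decomposition $U=KBK$ (citing the standard references) --- so your write-up supplies precisely the argument the authors are implicitly invoking.
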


\section{Spherical Fourier Analysis on $\bY$}
\label{Fourier}
\noindent
In this section we recall the classification of $\chi_l$-spherical representations of $U$ (and $G$) due
to H. Schlichtkrull \cite{sc}. We then recall the Plancherel formula for $L^2(\bY;\cL_l)$ where $\cL_l=\cL_{\chi_l}$.
Next, we discuss the $\chi_l$-spherical functions and the decomposition of $L^2 (\bY;\cL_l)$.  

\subsection{The $\chi_l$-spherical representations}  
\label{s:sphRep}

Let $\Lambda^+ (U) \subset i\, \g{h}^\ast$ be the semi-lattice of highest weights of irreducible representations of $U$. 
As we are assuming that $U$ is simply connected we have
$$\Lambda^+ (U) = \left\{\lambda \in \g{h}_{\C}^\ast \mid (\forall \alpha \in \Delta^+)\,\,  2 \lambda_\alpha \in \Z^+\right\}\, .$$ 

For $\lambda \in \Lambda^+ (U)$ choose an irreducible unitary representation
$(\pi_\lambda,\, V_\lambda)$ of $U$. For $l\in \Z$ let
\begin{equation}\label{eq:chi}
V_\lambda^{\chi_l}=V_\lambda^l=\{v\in V_\lambda \mid (\forall k\in K)\,\, \pi_\lambda (k)v=
\chi_l (k)v\}\, .
\end{equation}
Note that $V_\lambda^0=V_\lambda^K=\{v\in V_\lambda \mid (\forall k\in K)\,\, \pi_\lambda (k)v=v\}$
is the space of $K$-fixed vectors. The representation $(\pi_\lambda,V_\lambda)$ is said to be
$\chi_l$-spherical if $V_\lambda^l\not=\{0\}$ and spherical if it is $\chi_0$-spherical. If $(\pi_\lambda, V_\lambda)$
is $\chi_l$-spherical then  $\dim V_\lambda^l =1$. 
Denote by $\Lambda_l^+(U)$ the set of highest weights of $\chi_l$-spherical representations of $U$. Let  
\begin{equation}\label{eq:Lplus}
\Lambda_l^+ := \{\mu \in \g{a}^\ast\; \mid\; \mu = \lambda |_\g{a},\; \lambda \in \Lambda_l^+\, (U)\}\, .
\end{equation}
 
According to \cite[p.535 and p.538]{h2} we have that if $\lambda\in\gL^+_0(U)$ then
$\lambda|_{\fh\cap \fk}=0$ and the set
$\Lambda_0^+$ is exactly the set introduced in (\ref{eq1_6}). 

According to  \cite{sc}, we can decompose $\g{h} \cap \g{k}$ as
$$\g{h} \cap \g{k} = (\g{h}    \cap \g{k}_1)  \oplus  \R  X$$
where $X$ is defined as in \cite[p.285, $(4.4)$]{sc} so that
\begin{enumerate}
\item $e^{t\, X} \in K_1$ if and only if $t \in 2 \pi\, i\, \Z$,
\item $Z - X \in \g{k}_1$ where $Z$ is the same as in Proposition \ref{pro0} (see Lemma $4.3$ in \cite{sc}).
\end{enumerate}
Note that $X=0$ in Case I. 
For $\lambda\in i\,\fh^*$ we write accordingly $\lambda = (\mu,\lambda_1,\mu_0)$ where 
$\mu=\lambda|_{\fa}$, $\lambda_1 = \lambda|_{\fk_1\cap \fh}$ and $\mu_0=\lambda (iX)$. If $X=0$ then
we write $\mu_0=0$. When $\chi_l$ is fixed for some $l \in \mathbb{Z}$, $\mu_0$ is then fixed. 
Note that $\mu_0$ is independent of $\lambda$. If $\lambda \in \Lambda_l^+\, (U)$ then $\lambda\, \big|_{\g{h} \cap \g{k}_1} = 0$.
Hence, $\lambda$ is uniquely determined by its restriction $\mu$. 
Thus there is a bijective correspondence $\Lambda_l^+\, (U) \cong \Lambda_l^+$ via $\lambda = (\mu, 0, \mu_0) \mapsto \mu$.
For convenience, we sometimes write $(\mu,\, 0,\, \mu_0) = \mu + \mu_0$.
We identify $\fa_\C^*$ with $\C^n$ by $\mu =(\mu_1,\ldots ,\mu_n)$ with
$\mu_j=\mu_{\eps_j}$.  Recall that $\fa^\ast =i\fb^\ast$.

\begin{theorem}[H. Schlichtkrull]
Let $U$ be a compact simply connected semisimple Lie group and $K$ the fixed point group of an involution of $U$. Let
$l\in \Z$. The set $\Lambda_l^+$ of highest restricted weights of irreducible $\chi_l$-spherical
representations of $U$ is given by 
\begin{equation}
\label{eq11}
\Lambda_l^+ = \left\{\mu \in \g{a}^\ast\; \left|\; 
\begin{array}{l}
\mu_j - \mu_i \in 2\, \Z^+\, (1 \leq i < j \leq n)\\
\mu_0 = 0\; \text{(Case I)};\; \mu_0 = l\; \text{(Case II)}\\
\mu_1 \in |l| + 2\, \Z^+
\end{array}\right.\right\}.
\end{equation}
\end{theorem}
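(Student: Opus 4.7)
The plan is to derive this explicit description directly from Schlichtkrull's classification of $\Lambda_l^+(U)$ in \cite{sc}, combined with the bijection $\Lambda_l^+(U) \cong \Lambda_l^+$ given by $\lambda = (\mu,0,\mu_0) \mapsto \mu$ and the explicit $BC_n$-description of $\Sigma^+$ from Moore's Theorem \ref{thm5}. In other words, the content of the statement is not a new classification result, but a convenient reformulation in coordinates $(\mu_1,\ldots,\mu_n)$ adapted to the orthogonal basis $\{\eps_1,\ldots,\eps_n\}$ for $\fa^*$.

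First I would unpack Schlichtkrull's characterization: an element $\lambda \in i\fh^*$ gives a $\chi_l$-spherical irreducible representation of the simply connected compact group $U$ iff (a) $\lambda$ is a dominant analytically integral weight, i.e.\ $2\lambda_\alpha \in \Z^+$ for every $\alpha \in \Delta^+$; (b) $\lambda$ restricts to $0$ on $\fh \cap \fk_1$; and (c) $\lambda(iX) = l$, which is forced by the $\chi_l$-transformation law on $Z(K)_o$ together with the relation $Z-X\in\fk_1$ from the preamble. Condition (b) together with $\fh=(\fh\cap\fk)\oplus\fb$ says that $\lambda$ is determined by the pair $(\mu,\mu_0)$ with $\mu=\lambda|_\fa$, making the bijection $\Lambda_l^+(U)\cong\Lambda_l^+$ transparent. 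Now I would translate (a) into conditions on the coordinates $\mu_j=\mu_{\eps_j}$. Using the explicit list from Theorem \ref{thm5}, restricting (a) to simple roots of $\Delta^+$ whose $\fb$-restrictions are the medium roots $\eps_j-\eps_i$ produces $\mu_j-\mu_i\in 2\Z^+$ for $i<j$; restricting to simple roots lying over the long root $2\eps_1$ (Case I) or the short root $\eps_1$ (Case II) gives the condition on $\mu_1$, while the remaining simple roots in $\Delta_0^+$ automatically kill $\lambda|_{\fh\cap\fk_1}$ so no extra constraint appears.

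The main obstacle will be to show that the final integrality condition at the end of the chain of simple roots produces exactly the shift $\mu_1\in |l|+2\Z^+$ rather than, say, $\mu_1-l\in 2\Z^+$. This requires keeping track of how $\lambda(iX)=l$ couples to the $\fa$-component through the relation $Z-X\in\fh\cap\fk_1$: in Case II the root $\eps_1$ is short and the minimal $\mu_1$ equals $|l|$ because $X\neq 0$ contributes to the integrality on the short simple root, whereas in Case I one has $X=0$, so $\mu_0=0$ automatically and the minimal $\mu_1$ equals $|l|=0$. The absolute value $|l|$ (rather than $l$) appears because, after fixing the Weyl chamber $\fa^+$, the dominance requirement $\mu_n\geq\cdots\geq\mu_1\geq 0$ forces the sign of the contribution of the central character to be non-negative; equivalently, replacing $\chi_l$ by $\chi_{-l}$ yields a Weyl-equivalent highest weight under the sign changes in $W(BC_n)$. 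I would handle the two cases in parallel, verify the identification on each of the six entries of Table \ref{t2}, and thereby recover the uniform formula \eqref{eq11}.
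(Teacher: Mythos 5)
The paper's own proof of this theorem is a bare citation: the description \eqref{eq11} is taken verbatim from Proposition 7.1 and Theorem 7.2 of \cite{sc}. Your proposal instead attempts a re-derivation, and it fails at the very first step. You take as ``Schlichtkrull's characterization'' the claim that $\pi_\lambda$ is $\chi_l$-spherical \emph{if and only if} $\lambda$ is dominant integral for $\Delta^+$, vanishes on $\fh\cap\fk_1$, and satisfies $\lambda(iX)=l$. Only the ``only if'' direction of that is true; the hard content of Schlichtkrull's theorem (a line-bundle version of the Cartan--Helgason theorem) is precisely the additional branching condition that $\pi_\lambda|_K$ actually contain $\chi_l$. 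The correct condition is an integrality requirement on the \emph{restricted} root system of the Cartan--Helgason type $\mu_\alpha\in\Z^+$ (compare (\ref{eq1_6}), where the condition is $\lambda_\alpha\in\Z^+$, not $2\lambda_\alpha\in\Z^+$), suitably modified by $l$, and this is strictly stronger than dominance. Concretely, for a medium root one has $2\mu_{\eps_j-\eps_i}=\mu_j-\mu_i$, so dominance only yields $\mu_j-\mu_i\in\Z^+$, whereas \eqref{eq11} demands $\mu_j-\mu_i\in2\Z^+$; likewise $2\mu_{2\eps_j}=\mu_j$, so dominance gives $\mu_j\in\Z^+$ while sphericality (for $l=0$) forces $\mu_j\in2\Z^+$. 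A counterexample to your ``iff'' is $U/K=\mathrm{Sp}(2)/\mathrm{U}(2)$ with $l=0$ and $\mu=\eps_2$: here $\fh=\fb$, so your conditions (b) and (c) are vacuous and $\eps_2$ is dominant integral, yet the standard $4$-dimensional representation of $\mathrm{Sp}(2)$ decomposes over $\mathrm{U}(2)$ as $\C^2\oplus(\C^2)^*$ and has no fixed vector.

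Consequently the factors of $2$ and the shift by $|l|$ in \eqref{eq11} cannot be obtained by ``restricting the dominance condition to simple roots'' as you propose; they come from the lowest-$K$-type/branching analysis carried out in \cite{sc}, which your outline assumes rather than proves. Your closing suggestion to verify the formula on the six entries of Table \eqref{t2} could in principle supply the missing branching argument, but that is then the entire proof (including the exceptional cases), not a sanity check on easy bookkeeping. If instead you start from Schlichtkrull's theorem as actually stated --- in terms of restricted roots, with the $\Z^+$-integrality and the conditions tied to $X$ and $l$ --- then the passage to the coordinates $\mu_j=\mu_{\eps_j}$ is indeed routine, and that is exactly what the paper does by citing \cite{sc}.
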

\begin{proof} See Proposition 7.1 and Theorem 7.2 in \cite{sc}.
\end{proof}

\begin{remark}{\rm It follows from the description of $\Lambda_l^+$ that in Case I for some $l\in \Z$, a $\chi_l$-spherical representation 
$\pi_\mu$ is also spherical if $l$ is even. In fact it is shown in \cite[Thm. 7.2]{sc} that if $l$ is even $\pi_\mu$ 
must also contain the character $\chi_0$. }
\end{remark}

A simpler description of the set $\gL^+_l$ is given by the following proposition. For that let
\begin{equation}
\label{eqrhos}
\rho_s= \frac{1}{2}\sum_{\alpha\in\cO_s^+}\alpha = \frac{1}{2}\, \sum_{j=1}^n\, \eps_j.
\end{equation}

\begin{proposition}
\label{pro2}
For $l \in \Z$ we have $\Lambda_l^+ = \Lambda_0^+ + 2 |l|\, \rho_s $.
\end{proposition}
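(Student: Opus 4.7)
The claim is an equality of subsets of $\fa^*$, so by the description of $\Lambda_l^+$ given in \eqref{eq11} it suffices to check that the two defining conditions match up under translation by $2|l|\rho_s$. My plan is to make this matching explicit by a direct coordinate computation.

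First I would rewrite $2|l|\rho_s$ explicitly using \eqref{eqrhos}: since $\rho_s = \tfrac{1}{2}\sum_{j=1}^n \eps_j$, we have $2|l|\rho_s = |l|(\eps_1+\cdots+\eps_n)$, so in the coordinates $\mu=(\mu_1,\ldots,\mu_n)$ with $\mu_j=\mu_{\eps_j}$, adding $2|l|\rho_s$ simply translates every coordinate by $|l|$. Next I would check the forward inclusion $\Lambda_0^++2|l|\rho_s\subseteq \Lambda_l^+$: given $\mu\in\Lambda_0^+$, set $\mu':=\mu+2|l|\rho_s$. The condition $\mu'_j-\mu'_i=\mu_j-\mu_i\in 2\Z^+$ is unaffected by a uniform translation, and $\mu'_1=\mu_1+|l|\in|l|+2\Z^+$ follows from $\mu_1\in 2\Z^+$. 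The conditions on $\mu_0$ in \eqref{eq11} are automatic, since $\mu_0$ is determined solely by $l$ (namely $\mu_0=0$ in Case I because $X=0$, and $\mu_0=l$ in Case II, independently of $\mu\in\fa^*$).

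The reverse inclusion is proved the same way: for $\nu\in\Lambda_l^+$, set $\mu:=\nu-2|l|\rho_s$, so $\mu_j=\nu_j-|l|$. The differences $\mu_j-\mu_i=\nu_j-\nu_i$ remain in $2\Z^+$, and $\mu_1=\nu_1-|l|\in 2\Z^+$ since $\nu_1\in|l|+2\Z^+$. Hence $\mu\in\Lambda_0^+$ and $\nu=\mu+2|l|\rho_s\in\Lambda_0^++2|l|\rho_s$.

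I do not anticipate any real obstacle here; the content of the proposition is essentially the observation that the only place where the parameter $l$ enters the combinatorial description \eqref{eq11} of $\Lambda_l^+$ in the $\fa^*$-coordinates is through the shift of the first coordinate by $|l|$, and that this shift is exactly the one produced by translating by $2|l|\rho_s=|l|\sum_{j=1}^n\eps_j$. The only point worth mentioning is that this uniform statement requires Case I and Case II to be treated simultaneously, which is possible because in Case I the short roots are absent ($m_s=0$) but $\rho_s$ and the lattice description of $\Lambda_l^+$ still make sense and agree with the general formula.
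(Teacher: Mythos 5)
Your proposal is correct and follows essentially the same route as the paper: both arguments reduce the statement to the observation that adding $2|l|\rho_s=|l|\sum_j\eps_j$ shifts each coordinate $\mu_j$ by $|l|$, leaving the differences $\mu_j-\mu_i$ unchanged and moving $\mu_1$ between $2\Z^+$ and $|l|+2\Z^+$, with $\mu_0$ fixed by $l$. The paper phrases the verification as $(\mu-2|l|\rho_s)_\alpha\in\Z^+$ checked orbit by orbit against the characterization \eqref{eq1_6} of $\Lambda_0^+$, which amounts to the same computation as yours.
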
 

\begin{proof}
Let $\mu \in \Lambda_l^+$.  We want to show that $(\mu - 2|l|\rho_s)_\alpha =\mu_\alpha-2|l|\, (\rho_s)_\alpha
\in \Z^+$ for all $\alpha \in \Sigma^+$. Let $r=|l|$. We have
\begin{equation}
\label{eq1pr}
\mu_{\eps_j}-2r (\rho_s)_{\eps_j}=\mu_j-r \in 2\Z^+
\end{equation}
and
\begin{equation}
\label{eq3pr}
\mu_{\eps_j\pm \eps_i} - 2r (\rho_s)_{\eps_j \pm \eps_i}=\frac{1}{2} (\mu_j\pm \mu_i - (r \pm r))\in \Z^+
\end{equation}
according to (\ref{eq11}). Finally, again by (\ref{eq11}), we have
\begin{equation}
\label{eq2pr}
\mu_{2\eps_j}-2r\, (\rho_s)_{2\eps_j}=\frac{1}{2}\left(\mu_j - r\right)\in \Z^+\, .
\end{equation} 
Thus $\mu-2|l|\rho_s\in\gL^+_0$.

On the other hand, if $\mu^0\in \gL^+_0$ define $\mu=\mu^0+2|l|\rho_s$. Then (\ref{eq1pr}), (\ref{eq3pr}) and
(\ref{eq2pr}) together with (\ref{eq11}) show that $\mu \in\gL^+_l$. 
\end{proof}
 
Recall that the fundamental spherical weights $\omega_j$ are defined by (\ref{eq6}). 
Then $\gL_0^+=\Z^+\omega_1 \oplus \dotsb \oplus \Z^+\omega_n$. Hence
\begin{equation}\label{eq:gLiso}
 (\Z^+)^n\simeq \gL_l^+\, ,\quad (k_1,\ldots ,k_n)\mapsto k_1\omega_1+ \dotsb + k_n \omega_n +2|l|\rho_s\, .
\end{equation}

\subsection{The Fourier transform}
In this section we recall the basic facts about Fourier analysis on $L^2(\bY;\cL_l)$. 
For $\lambda =(\mu ,0,\mu_0)\in \gL^+_l (U)$. As $l$, and hence $\mu_0$ will be fixed most
of the time, the only variable is the first coordinate $\mu$ and sometimes $(\mu,\, l)$. 
We therefore simply write  $\mu$ instead of $\lambda$.

Let $L^2(\bY;\cL_l)$ be the space of $L^2$-sections of the line bundles $\cL_l$,
$L^2(U//K; \cL_l)$ the space of elements in $L^2(\bY;\cL_l)$ such that $f(k_1uk_2)=\chi_l (k_1k_2)^{-1}f(u)$
for all $k_1,k_2\in K$ and $u\in U$. Finally $C^\infty (U//K; \cL_l)$ is the space of smooth elements in 
$L^2(U//K;\cL_l)$.

Let $d (\mu ) := \dim\, V_\mu$. Then $\mu\mapsto d (\mu )$ is a polynomial map. 
Fix $ e_{\mu,l} \in V_\mu^{l}$ of length one. We will mostly write $e_\mu$ for $e_{\mu,l}$ as $l$ will be fixed.
 
We normalize the invariant measure on all compact groups so that the total measure is one. Define
\begin{equation}\label{Pchi}
P_{\mu,l}(u):=\int_K \chi_l(k)^{-1}\pi_\mu (k)u\, dk\, .
\end{equation}
Then $P_{\mu,l}$ is the orthogonal projection: $V_\mu \to V_\mu^l$. In
particular, $P_{\mu,l}(u)=\ip{u}{e_\mu}\, e_\mu$. If
$f\in L^2(\bY;\cL_l)$ then $\pi_\mu (f)=\pi_\mu (f)P_{\mu,l}$,
where, as usually,
\[\pi_\mu (f)= \int_U f(u)\pi_\mu (u)\, du.\]
It is therefore natural to define the vector valued Fourier transform of $f$ to be
\begin{equation}\label{eq:Fourier}
\widehat{f}(\mu,l)=\widehat{f}(\mu)=\pi_\mu (f)e_\mu\, .
\end{equation}
Note that 
\[\Tr (\pi_\mu (f))=\ip{\pi_\mu (f)e_\mu}{e_\mu}=
\int_{U} f(u)\ip{\pi_\mu (u)e_\mu}{e_\mu}\, du =\ip{f}{\psi_{\mu,l}}.\]
The function
\begin{equation}\label{eq:psi}
\psi_{\mu,l}(u)=\ip{e_\mu}{\pi_\mu (u)e_\mu},\; u \in U
\end{equation}
is the $(\mu,l)$, or $\chi_l$, \textit{spherical function} on $U$ which we will discuss in more details in
the next section. 
Furthermore, if  $f\in L^2 (U//K; \cL_l)$ then $\pi_\mu (f)e_\mu$ is again a scalar multiple of $e_\mu$ and so
\[\pi_\mu (f)e_\mu =\ip{\pi_\mu (f)e_\mu}{e_\mu}\, e_\mu=
\ip{f}{\psi_{\mu,l}}\, e_\mu.\]
The $\chi_l$-spherical function is the unique element in $C^\infty (U// K;\cL_l)$ such that
$\psi_{\mu,l}(e)=1$. Furthermore $\{d(\mu)^{1/2}\, \psi_{\mu,l}\}$  is an orthogonal basis for
$L^2(U//K;\cL_l)$. Note that
\[\overline{\psi_{\mu,l}(u)}=\ip{\pi_\mu (u)e_\mu}{e_\mu}=\ip{e_\mu}{\pi_\mu (u^{-1})e_\mu}=\psi_{\mu,l}(u^{-1})\]
Let
\begin{equation}\label{eq:ell}
\ell^2_{d} (\gL^+_l) =\{(a(\mu))_{\mu\in\gL^+_l}\mid \sum_{\mu \in \gL_l^+} d(\mu )|a(\mu)|^2<\infty\}\, .
\end{equation}
Then $\ell^2_d(\gL^+_l)$ is an Hilbert space with inner product $(a, b)=\sum d(\mu) a(\mu)\overline{b (\mu)}$.
The \textit{$\chi_l$-spherical Fourier transform} $\cS_l: L^2(U//K,\cL_l)\to \ell^2_d(\gL^+_l)$ defined by
\begin{equation}\label{eq:SpFt}
\cS_l\, (f)\, (\mu) = \int_U\, f\, (u)\, \psi_{\mu,\, l}\, (u^{-1})\, d\, u = \ip{f}{\psi_{\mu,\, l}}
\end{equation}
is an unitary isomorphism. We collect the main facts in the following theorem:

\begin{theorem}[The Plancherel Theorem]\label{pro13}
Assume that $\bY$ is a simply connected. For $\mu \in\Lambda_l^+(U)$ and
$v\in V_{\mu}$
let $f_{\mu,v}(x)=\ip{v}{\pi_{\mu}\, (x)\, e_\mu}$, $v\in V_\mu$ and  $L_\mu^2(\bY;\cL_l)=\{f_{\mu ,v}\mid
v\in V_{\mu}\}$. Then the following holds true:

(1) If $f\in L^2(\bY;\cL_l)$ then
\begin{eqnarray*}
\|f\|^2 & = &\sum_{\mu \in \gL^+_l} d(\mu)\|\wh{f}\|_{V_\mu}^2\\
f(x) & = & \sum_{\mu\in\gL^+_l} d (\mu ) \ip{\wh{f}(\mu)}{\pi_\mu (x)e_\mu}
\end{eqnarray*}
where the convergence is in the $L^2$-norm topology. The convergence is uniform if $f$ is smooth.

(2) $L^2(\bY;\cL_l)\simeq \bigoplus_{\mu\in \gL^+_l} L^2_\mu(\bY ; \cL_l)$.  

(3) If $f\in L^2(U//K;\cL_l)$ then
\begin{eqnarray*}
\|f\|^2 & = &\sum_{\mu\in\gL^+_l} d(\mu )|\cS_l\, (f)\,(\mu)|^2\\
f & = & \sum_{\mu\in\gL^+_l}\, d(\mu ) \cS_l\, (f)\, (\mu )\psi_{\mu,l}
\end{eqnarray*}
where the sum is understood in the $L^2$-norm sense and uniformly if $f$ is smooth.

(4) $L^2(U//K;\cL_l)\simeq \ell^2_d(\gL^+_l)$.
\end{theorem}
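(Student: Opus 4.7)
The plan is to derive Theorem \ref{pro13} from the Peter-Weyl theorem for $L^2(U)$ together with the classification of $\chi_l$-spherical representations recalled in Section \ref{s:sphRep}.

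First I would view $L^2(\bY;\cL_l)$ as the closed subspace of $L^2(U)$ of functions satisfying the right-equivariance $f(uk) = \chi_l(k)^{-1} f(u)$; equivalently, it is the $\chi_l^{-1}$-isotypic component of $L^2(U)$ for the right regular action of $K$. The Peter-Weyl theorem gives a unitary $U \times U$-equivariant isomorphism
\begin{equation*}
L^2(U) \;\cong\; \widehat{\bigoplus}_{\lambda \in \Lambda^+(U)} V_\lambda \otimes V_\lambda^*,
\end{equation*}
realized by suitably normalized matrix coefficients. Extracting the $\chi_l^{-1}$-isotypic subspace of each $V_\lambda^*$ under the right $K$-action amounts to selecting $(V_\lambda^l)^*$, which by the classification is nonzero precisely when $\lambda \in \gL_l^+(U)$, and is then the one-dimensional line $\C e_\mu^*$ dual to $\C e_\mu$. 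This produces (2), with $L^2_\mu(\bY;\cL_l) = \{f_{\mu,v} : v \in V_\mu\}$.

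Next I would compute the Fourier transform $\widehat{f}(\mu) = \pi_\mu(f) e_\mu$ under this identification. Using the projection $P_{\mu,l}$ from \eqref{Pchi} and Schur orthogonality, a short calculation shows that if the $\mu$-component of $f$ corresponds under Peter-Weyl to $v \otimes e_\mu^*$ then $\widehat{f}(\mu) = d(\mu)^{-1} v$ while its $L^2$-norm equals $\|v\|^2 / d(\mu)$. Summing over $\mu$ yields the Plancherel identity and the Fourier inversion formula of (1). Intersecting this decomposition with the left $\chi_l^{-1}$-semi-invariant subspace $L^2(U//K;\cL_l)$ cuts each $L^2_\mu(\bY;\cL_l)$ down to the one-dimensional line spanned by $\psi_{\mu,l}$; this gives (4) and, after identifying $\cS_l(f)(\mu) = \ip{f}{\psi_{\mu,l}}$ with the above pairing, the bi-covariant Plancherel statement in (3).

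The remaining step, which I expect to be the most delicate, is upgrading the $L^2$-convergence in (1) and (3) to uniform convergence when $f$ is smooth. My approach is to apply an elliptic invariant operator such as $1 - \Omega$, where $\Omega$ is the Casimir of $U$, whose eigenvalue $c_\mu$ on $\pi_\mu$ grows quadratically in $\|\mu\|$. For smooth $f$, the Fourier coefficients of $(1-\Omega)^N f$ are $(1+c_\mu)^N \widehat{f}(\mu)$ and lie in the relevant $\ell^2$ space for every $N$; combined with the polynomial Weyl bound on $d(\mu)$, the pointwise estimate $\|\pi_\mu(u) e_\mu\| \leq 1$, and the polynomial growth of $\#\{\mu \in \gL_l^+ : \|\mu\| \leq R\}$, choosing $N$ large enough dominates every power of $\|\mu\|$ and gives absolute uniform convergence of both series.
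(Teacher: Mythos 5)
Your proof is correct, and it follows exactly the route the paper implicitly relies on: the paper states Theorem \ref{pro13} without proof as a collection of standard facts, resting on the Peter--Weyl decomposition of $L^2(U)$, the multiplicity-one property $\dim V_\mu^l=1$ from \cite{sc}, and Schur orthogonality, which is precisely your argument. Your Casimir/Sobolev argument for upgrading to uniform convergence in the smooth case is likewise the standard one and is sound.
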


\section{The $\chi_l$-Spherical Functions}\label{s:spheri}
\noindent
The $\chi_l$-spherical functions were already introduced in the last section. We now
discuss them in more details 
and present the results needed for the proof of the 
Paley-Wiener Theorem in Section \ref{s:PW}. The standard
references for the material in this section are \cite{hs}. Also see \cite{sh}. Our assumptions are the
same as in the last section. In particular $\bY=U/K$ is an irreducible Hermitian symmetric 
space with $U$ simple and simply connected.

\subsection{The $\chi_l$-spherical functions on $G$} Let us start by recalling the definition of
a $\chi_l$-spherical function.

\begin{definition}
Let $H$ be a locally compact Hausdorff group and $L \subset H$ a compact subgroup.  Let $\chi_l: L\to \T$ be
a continuous homomorphism. A continuous function $\varphi: H
\to \C$ is an \textit{(elementary) spherical function of type $\chi_l$} if $\psi$ is not identically $0$
and
\begin{equation}
\label{eq13}
\int_L  \varphi (a k  b)\, \chi_l (k)\, d k = \varphi (a) \varphi (b)\, ,\quad \text{ for all }\quad  a,b\in H\, .
\end{equation}
\end{definition}
We will mostly say that $\varphi$ is a $\chi_l$-spherical function or a spherical function of type $\chi_l$.

\begin{lemma} Let $\varphi $ be a spherical function of type $\chi_l$. Then
$$\varphi  (k_1 h  k_2) = \chi_l  (k_1 k_2)^{-1}\, \varphi (h )\, , \quad \text{ for all } \quad  h \in H \text{ and } k_1, k_2 \in L\, .$$
Furthermore, $\varphi (e)=1$. 
\end{lemma}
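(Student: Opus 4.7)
The plan is to extract the two one-sided covariance properties by a change-of-variable trick in the defining integral, combine them using that $\chi_l$ is a homomorphism, and only then deduce $\varphi(e)=1$ from the functional equation evaluated at the identity.

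First I would prove left covariance $\varphi(k_2 h)=\chi_l(k_2)^{-1}\varphi(h)$. Replacing $b$ by $k_2 b$ in (\ref{eq13}) gives
\[
\int_L \varphi(a k k_2 b)\,\chi_l(k)\,dk=\varphi(a)\varphi(k_2 b).
\]
Substituting $k\mapsto k k_2^{-1}$ and using the (bi-)invariance of the Haar measure on the compact group $L$ together with the multiplicativity of $\chi_l$ turns the left-hand side into $\chi_l(k_2)^{-1}\int_L \varphi(a k b)\chi_l(k)\,dk=\chi_l(k_2)^{-1}\varphi(a)\varphi(b)$. Because $\varphi\not\equiv 0$, there exists $a_0\in H$ with $\varphi(a_0)\neq 0$; specializing $a=a_0$ and cancelling gives the left covariance for all $b$ and all $k_2\in L$. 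The analogous substitution $a\mapsto ak_1$ and the change of variable $k\mapsto k_1^{-1}k$ yield the right covariance $\varphi(hk_1)=\chi_l(k_1)^{-1}\varphi(h)$, using $\varphi(b_0)\neq 0$ for some $b_0$. Applying these two identities successively and invoking $\chi_l(k_1k_2)=\chi_l(k_1)\chi_l(k_2)$ gives the stated formula $\varphi(k_1 h k_2)=\chi_l(k_1 k_2)^{-1}\varphi(h)$.

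For the normalization $\varphi(e)=1$, set $a=b=e$ in (\ref{eq13}):
\[
\int_L \varphi(k)\chi_l(k)\,dk=\varphi(e)^2.
\]
From the left-covariance just proved, $\varphi(k)=\chi_l(k)^{-1}\varphi(e)$ for $k\in L$, so the integrand equals $\varphi(e)$ and the total mass of $L$ is one. Hence $\varphi(e)=\varphi(e)^2$, giving $\varphi(e)\in\{0,1\}$. If $\varphi(e)=0$, then taking $a=e$ in (\ref{eq13}) gives $\int_L\varphi(kb)\chi_l(k)\,dk=0$ for every $b$; but left covariance turns the left-hand side into $\varphi(b)$, contradicting $\varphi\not\equiv 0$. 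Therefore $\varphi(e)=1$.

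The only mildly delicate point is the circularity one must avoid: the normalization $\varphi(e)=1$ is derived after, not before, the covariance identities, and the covariance identities themselves rest solely on the existence of some point where $\varphi$ does not vanish. Once the order of steps is arranged as above, the argument is a straightforward manipulation of Haar integrals on the compact group $L$.
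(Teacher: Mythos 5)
Your proof is correct and follows essentially the same route as the paper: both derive the one-sided covariances by a change of variable in the Haar integral of the defining functional equation, using a point where $\varphi$ does not vanish, and then deduce $\varphi(e)=1$ afterwards. The only cosmetic difference is that the paper fixes $b$ with $\varphi(b)\neq 0$ and shifts the variable $a$, while you fix $a$ and shift $b$, and your derivation of $\varphi(e)=1$ is slightly more roundabout (via $\varphi(e)^2=\varphi(e)$) than the paper's direct substitution $b=e$.
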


\begin{proof} Let $b\in H$ be so that $\varphi (b)\not= 0$. Let $a\in H$ and $m\in L$. Then 
\[\varphi (am) = \frac{1}{\varphi (b)}\int_L \varphi (amkb)\chi_l (k)\, dk 
 =  \frac{1}{\varphi (b)}\int_L \varphi (akb)\chi_l (m^{-1}k)\, dk = \chi_l (m)^{-1}\varphi (a)\, .\]
One can show that $\varphi (ma)=\chi_l (m)^{-1}\varphi (a)$ in the same way by applying (\ref{eq13}) to 
\[\frac{1}{\varphi (b)}\int_L \varphi (bka)\chi_l (k\, m^{-1})\, dk.\]
That $\varphi (e)=1$ follows from (\ref{eq13}) by taking $b=e$.
\end{proof}
 
As $G_\C=U_\C$ is simply connected it follows that
$\pi_\mu$ extends to an irreducible holomorphic representation of $G_\C$ which we also denote by
$\pi_\mu$. Thus  $\chi_l$ extends to a homomorphism of $K_\C$ also denoted by
$\chi_l$.

\begin{lemma}
Let $\mu \in \Lambda_l^+$. Then $\psi_{\mu,l}$ is a spherical function of $\chi_l$ type.
It extends to a holomorphic function on $U_\C$. The extension is given by
$$\check{\psi}_{\mu,\, l}\, (g) = \ip{\pi_\mu (g^{-1}) e_\mu}{ e_\mu}\, , \qquad g \in U_\C\, .$$
Furthermore, the holomorphic extension satisfies
$\check{\psi}_{\mu,l}(k_1gk_2)=\chi_l (k_1k_2)^{-1}\check{\psi}_{\mu, l} (g)$ for all
$k_1,k_2\in K_\C$ and $g\in U_\C$.
\end{lemma}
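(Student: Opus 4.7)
My plan is to handle the three assertions of the lemma in sequence: (i) sphericality of $\psi_{\mu,l}$, (ii) the holomorphic extension to $U_\C$ with the displayed matrix-coefficient formula, and (iii) bicovariance of the extension under $K_\C \times K_\C$. Throughout I would rely on the matrix-coefficient definition of $\psi_{\mu,l}$ together with Weyl's unitary trick to pass from $U$ to $U_\C$.

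For (i), to verify the defining integral equation (\ref{eq13}) I would compute, for $a,b \in U$,
\[\int_K \psi_{\mu,l}(akb)\,\chi_l(k)\,dk = \ip{\pi_\mu(a^{-1})e_\mu}{P_{\mu,l}\,\pi_\mu(b)e_\mu}\]
after moving $\pi_\mu(a)$ across the inner product by unitarity and pulling the integral through the sesquilinear form, so that the $\chi_l$ factor combines with $\pi_\mu(k)$ into the projection $P_{\mu,l}$ of (\ref{Pchi}). Since $V_\mu^l = \C e_\mu$ is one-dimensional, $P_{\mu,l}\,\pi_\mu(b)e_\mu$ is a scalar multiple of $e_\mu$ whose coefficient is itself a matrix coefficient at $b$, and the full expression then factors as $\psi_{\mu,l}(a)\psi_{\mu,l}(b)$.

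For (ii), because $U_\C$ is simply connected with Lie algebra $\fu_\C$, Weyl's unitary trick lets $\pi_\mu$ extend uniquely to an irreducible holomorphic representation of $U_\C$, still denoted $\pi_\mu$. The function $g \mapsto \ip{\pi_\mu(g^{-1})e_\mu}{e_\mu}$ is then a matrix coefficient of this holomorphic representation composed with the holomorphic inversion map, and is consequently holomorphic on $U_\C$. Its restriction to $U$ coincides with $\psi_{\mu,l}$ because unitarity of $\pi_\mu$ on $U$ gives $\pi_\mu(u^{-1}) = \pi_\mu(u)^*$, whence $\ip{\pi_\mu(u^{-1})e_\mu}{e_\mu} = \ip{e_\mu}{\pi_\mu(u)e_\mu} = \psi_{\mu,l}(u)$. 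Since $U$ is a totally real form of $U_\C$, the identity theorem identifies this as the unique holomorphic extension $\check{\psi}_{\mu,l}$.

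For (iii), I would first extend the $\chi_l$-equivariance of $e_\mu$ from $K$ to $K_\C$: the equality $\pi_\mu(k)e_\mu = \chi_l(k)e_\mu$ has both sides holomorphic in $k \in K_\C$ (using that $\chi_l$ extends holomorphically to $K_\C$), so by the identity theorem it holds for all $k \in K_\C$. Combined with $(k_1 g k_2)^{-1} = k_2^{-1} g^{-1} k_1^{-1}$ and the multiplicativity of $\pi_\mu$, the bicovariance formula
\[\check{\psi}_{\mu,l}(k_1 g k_2) = \chi_l(k_1 k_2)^{-1}\check{\psi}_{\mu,l}(g)\]
is straightforward to verify directly on $K \times U \times K$ (where unitarity allows one to move $k_2^{-1}$ through the inner product), and it then extends to $K_\C \times U_\C \times K_\C$ by analytic continuation in $(k_1,g,k_2)$. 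The main subtlety, and the place where I would be most careful, is precisely this last step: on $K_\C$ the extended representation $\pi_\mu$ is no longer unitary, so adjoints and inverses no longer coincide off the real form. Establishing the bicovariance first on the compact form $K$ and then appealing to the identity theorem sidesteps this issue cleanly, without having to manipulate adjoints of $\pi_\mu(k)$ for $k \in K_\C \setminus K$.
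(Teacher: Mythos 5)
Your proposal is correct and matches the paper's argument: the verification of the functional equation (\ref{eq13}) via the projection $P_{\mu,l}$ and one-dimensionality of $V_\mu^l$ is exactly the computation the paper carries out, while the holomorphic extension and the $K_\C$-bicovariance (which the paper dismisses as standard) are filled in by you with the natural analytic-continuation arguments. Your remark about first establishing bicovariance on $K\times U\times K$, where unitarity is available, before continuing analytically is a sensible way to make the "standard" part precise.
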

\begin{proof} This is standard, but let us show that $\psi_{\mu,l}$ satisfies (\ref{eq13}). For that we note that
\[\int_K \chi_l (k)^{-1}\pi_{\mu}(k)\pi_{\mu }(b)e_\mu\, d\, k =\ip{\pi_\mu (b)e_\mu}{e_\mu}\, e_\mu,\quad b \in U. \]
Hence, for $a, b \in U$,
\begin{eqnarray*}
\int_{K}\psi_{\mu,l}(akb)\chi_l\, (k)\, dk&=&\int_K \ip{\pi_{\mu}(a^{-1})e_\mu}{\chi_l\, (k)^{-1}\pi_\mu (k)\pi_\mu (b)
e_\mu}\, dk\\
&=&\ip{e_\mu}{\pi_\mu (a)e_\mu}\overline{\ip{\pi_\mu (b)e_\mu}{e_\mu}}\\
&=& \psi_{\mu,\, l} (a)\, \psi_{\mu,\, l} (b)\, .
\end{eqnarray*}
\end{proof}

For $\lambda \in \g{a}_\C^\ast$ define $\varphi_{\lambda,\, l}: G \to \C$ by 
\begin{equation}
\label{eq5}
\varphi_{\lambda,\, l} (g) = \int_K a (g^{-1} k)^{\lambda - \rho} \chi_l (\kappa (g^{-1} k) k^{-1})\, d k.
\end{equation}

\begin{remark}
\label{rm:varphi}{\rm
Notice that the formula (\ref{eq5}) differs from the one  in \cite[p.82, (5.4.1)]{hs} by
an inverse sign. The definition (\ref{eq5}) for $\varphi_{\lambda, l}$ is equivalent to
$$\varphi_{\lambda,\, l}(g) = \int_K  a (g k)^{- \lambda - \rho} \chi_l (\kappa  (g k)^{-1} k)\, d k.$$
When $l = 0$, $\varphi_\lambda\, (g) = \varphi_{\lambda,\, 0}\, (g)$ is  the  Harish-Chandra spherical
function on $G$. }
\end{remark}

For the following theorem see \cite[p.82, Proposition 5.4.1]{hs}, \cite[Proposition 3.3, Corollary 3.7]{sh}, and Remark \ref{rm:varphi}:

\begin{theorem}\label{th:symm} The function $\varphi_{\lambda,\, l}$ is a spherical function of type $\chi_l$ on $G$.
If $\psi$ is a spherical function of type $\chi_l$ then there exists $\lambda\in \fa_\C^*$ such 
that $\psi=\varphi_{\lambda,\, l}$. Furthermore the following holds true:
\begin{enumerate}
\item  $\varphi_{\lambda,\, l}\, (g)$ is real analytic in $g \in G$, and holomorphic in
$\lambda  \in \g{b}_{\C}^\ast$.
\item $\varphi_{\lambda,\, l} = \varphi_{\mu, l}$ if and only if there is a $w \in W$ such that $\lambda = w \mu$.
\item $\varphi_{\lambda,\, l}\, (w\, a) = \varphi_{\lambda,\, l}\, (a)$, $\forall\, w \in W$.
\item $\varphi_{\lambda, l} (a) = \varphi_{\lambda, - l} (a) = \varphi_{-\lambda, - l} (a) = 
\varphi_{-\lambda,l}(a)$, $\forall\, a \in A$. 
\item $\varphi_{\lambda, l} (g) = \varphi_{-\lambda, -l}(g^{-1}) =
 \varphi_{\lambda, -l} (g^{-1}) = \varphi_{- \lambda, l} (g)$, $\forall\, g \in G$.
\end{enumerate}
\end{theorem}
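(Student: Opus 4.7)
The plan is to establish the theorem in three stages, corresponding to existence, classification, and symmetries.

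\textbf{Stage 1 (Existence and the functional equation).} I would realize $\varphi_{\lambda,l}$ as a matrix coefficient of a minimal principal series. Let $\pi_{\lambda,l} = \mathrm{Ind}_{MAN}^G(\chi_l \otimes e^{\lambda-\rho}\otimes 1)$ be the representation on smooth sections $\xi : G\to \C$ with $\xi(g\,man) = \chi_l(m)^{-1} a^{-(\lambda+\rho)}\xi(g)$, acting by left translation. The function $\xi_l$ on $K$ equal to $\chi_l^{-1}$, extended to $G$ via the Iwasawa decomposition, is $\chi_l$-covariant under $K$, and a substitution $k\mapsto gk$ in (\ref{eq5}) identifies $\varphi_{\lambda,l}(g) = \langle \pi_{\lambda,l}(g)\xi_l,\xi_l\rangle_{L^2(K)}$. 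The spherical functional equation (\ref{eq13}) then follows from the projection identity $\int_K \chi_l(k)\pi_{\lambda,l}(k)\,dk\cdot \xi_l = \xi_l$. Equivalently, (\ref{eq13}) can be verified directly by unfolding via the Iwasawa cocycles $a(xy) = a(x)\,a(\kappa(x)y)$ and $\kappa(xy) = \kappa(\kappa(x)y)$, after which the integral over $K$ collapses by invariance of Haar measure. Real analyticity in $g$ and holomorphy in $\lambda$ (item (1)) are then immediate from (\ref{eq5}), compactness of $K$, and smoothness of the integrand.

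\textbf{Stage 2 (Classification).} For the statement that every $\chi_l$-spherical function $\psi$ has the form $\varphi_{\lambda,l}$, and for item (2), I would argue that $\psi$ is a joint eigenfunction of the commutative algebra $\mathbb{D}(G,K;\chi_l)$ of $G$-invariant differential operators on the line bundle $\cL_l$; this follows by differentiating (\ref{eq13}) under the integral, since operators in the algebra commute with left translation and preserve $\chi_l$-covariance. A line bundle analogue of the Harish-Chandra isomorphism, due to Shimeno and developed in \cite{hs,sh}, identifies characters of $\mathbb{D}(G,K;\chi_l)$ with $W$-orbits in $\fa_\C^*$. Combined with $\chi_l$-bicovariance and the normalization $\psi(e)=1$, this determines $\psi$ uniquely by its $W$-orbit, giving $\psi = \varphi_{\lambda,l}$. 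This is the part I expect to be most delicate, since analyzing $\mathbb{D}(G,K;\chi_l)$ requires identifying its radial parts with Heckman-Opdam hypergeometric operators whose multiplicity parameters are shifted in an $l$-dependent way, and it is precisely this identification that forces the non-positive multiplicities appearing in the appendix.

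\textbf{Stage 3 (Symmetries).} Item (3) is immediate: any $w\in W$ is realized as $\Ad(k)|_\fa$ for some $k\in N_K(A)$, and $\chi_l$-bicovariance gives $\varphi_{\lambda,l}(kak^{-1}) = \chi_l(kk^{-1})^{-1}\varphi_{\lambda,l}(a) = \varphi_{\lambda,l}(a)$. For items (4) and (5) I would combine two ingredients: (a) for every hermitian symmetric pair the longest Weyl element acts as $-\mathrm{id}$ on $\fa^*$ (visible from Theorem \ref{thm5}, where $\Sigma$ is of type $BC_n$ or $C_n$), so Stage 2 yields $\varphi_{-\lambda,l} = \varphi_{\lambda,l}$; and (b) substituting $g\mapsto g^{-1}$ in (\ref{eq5}) and comparing with the alternative integral formula in Remark \ref{rm:varphi}, together with $\chi_l(k^{-1}) = \chi_{-l}(k)$, yields $\varphi_{\lambda,l}(g^{-1}) = \varphi_{-\lambda,-l}(g)$. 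The full four-fold equalities in (4) and (5) then follow by composing (a) and (b) and, where appropriate, restricting to $A$.
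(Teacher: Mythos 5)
The paper itself gives no proof of Theorem \ref{th:symm} — it is quoted from \cite[Prop.~5.4.1]{hs} and \cite[Prop.~3.3, Cor.~3.7]{sh} — and your three stages are a faithful reconstruction of the arguments in those references: the principal-series realization (or the direct Iwasawa-cocycle computation) for the functional equation, the line-bundle Harish--Chandra isomorphism $\D_l(\bX)\simeq S(\fa)^W$ plus uniqueness of the normalized joint eigenfunction for the classification and for (2), and the facts that $-1\in W$ for $C_n/BC_n$ and that inversion in (\ref{eq5}) swaps $(\lambda,l)\leftrightarrow(-\lambda,-l)$ for (3)--(5); so your route is the same one the paper relies on. The only overstatement is in item (1): real analyticity in $g$ is not ``immediate from smoothness of the integrand'' — that argument only yields smoothness of $\varphi_{\lambda,l}$; one gets analyticity either by holomorphically extending the Iwasawa maps $a(\cdot)$, $\kappa(\cdot)$ to a neighborhood of $G$ in $G_\C$ (using that $K_\C A_\C N_\C$ is open), or from ellipticity and analytic hypoellipticity of the invariant Laplacian applied to the eigenfunction property you establish in Stage 2. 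With that repair the proposal is sound.
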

  
The following lemma gives one of the main steps to analytically continue the $\chi_l$-spherical Fourier transform.

\begin{lemma}
\label{lem1}
Let $\mu \in \Lambda_l^+$. Then $\varphi_{\mu+\rho,l}$ extends to
a holomorphic function on $G_\C$, denoted again by $\varphi_{\mu+\rho,l}$, and $\check{\psi}_{\mu,l}=\varphi_{\mu+\rho,l}$. 
\end{lemma}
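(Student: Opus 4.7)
My strategy would be to reduce the theorem to an identification of two $\chi_l$-spherical functions on $G$. Since $\pi_\mu$ extends to a holomorphic representation of $U_\C = G_\C$, the function $\check\psi_{\mu,l}(g) = \ip{\pi_\mu(g^{-1})e_\mu}{e_\mu}$ is already defined and holomorphic on $G_\C$ by the previous lemma. Thus it suffices to prove the equality $\check\psi_{\mu,l} = \varphi_{\mu+\rho,l}$ as functions on $G$; the holomorphic extension of $\varphi_{\mu+\rho,l}$ to $G_\C$ is then automatic, and the stated $K_\C$-biequivariance under $\chi_l^{-1}$ is inherited from $\check\psi_{\mu,l}$.

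For this equality I would proceed in two steps. First, I would verify that $\check\psi_{\mu,l}|_G$ is itself an elementary $\chi_l$-spherical function on $G$. The functional identity \eqref{eq13} is established for $\check\psi_{\mu,l}$ on $U$ in the preceding lemma, and since both sides of \eqref{eq13} are holomorphic in $(a,b)\in U_\C\times U_\C$ (the $K$-integral is over a compact set), the identity propagates by analytic continuation to all of $U_\C$, and in particular restricts to $a,b\in G\subset U_\C$. Theorem~\ref{th:symm} then yields $\check\psi_{\mu,l}|_G = \varphi_{\lambda,l}$ for some $\lambda \in \fa_\C^*$, unique modulo $W$.

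The remaining step is to pin down $\lambda \in W\cdot(\mu+\rho)$. I would do so by comparing infinitesimal characters under the centre $\mathcal{Z}(\fu_\C)$ (or concretely the Casimir). As a matrix coefficient of the irreducible $\pi_\mu$, the function $\check\psi_{\mu,l}$ has the Harish-Chandra infinitesimal character attached to the shifted highest weight $(\mu,0,\mu_0) + \rho_\fh$; under the orthogonal decomposition $\rho_\fh = \rho + \rho_0$ from \eqref{eq:resRho} and the observation that the $\mu_0$-component is rigidly determined by $\chi_l$, this translates on the radial side into the parameter $\mu + \rho \in \fa^*$. Since $\varphi_{\lambda,l}$ has spectral parameter $\lambda$ on the same algebra of invariants, we conclude $\lambda \in W\cdot(\mu+\rho)$, and Theorem~\ref{th:symm}(2) gives $\varphi_{\lambda,l} = \varphi_{\mu+\rho,l}$.

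The main obstacle is the bookkeeping in the last step: one must precisely match the infinitesimal character coming from the compact/holomorphic side (naturally living on $\fh_\C^*$) with the spectral parameter on the noncompact-dual side (living on $\fa_\C^*$), and check that the Weyl-group ambiguity reduces exactly to $W=W(\Sigma)$ rather than the larger Weyl group of $\Delta$. The clean split \eqref{eq:resRho} together with the classification of $\Lambda_l^+$ (which fixes $\mu_0$ in terms of $l$) is what makes this translation tractable; once it is carried out, the theorem follows.
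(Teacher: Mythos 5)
Your reduction to the identity $\check\psi_{\mu,l}|_G=\varphi_{\mu+\rho,l}$ and your first step are fine: the functional equation (\ref{eq13}) for $\check\psi_{\mu,l}$ does propagate from $U\times U$ to $U_\C\times U_\C$ by analytic continuation (the $K$-integral of a holomorphic function is holomorphic, and $U$ is totally real in $U_\C$), so $\check\psi_{\mu,l}|_G$ is an elementary spherical function of type $\chi_l$ on $G$ and hence equals some $\varphi_{\lambda,l}$ by Theorem \ref{th:symm}. The gap is in your last step. The centre $\mathcal{Z}(\fu_\C)$ (a fortiori the Casimir alone, which gives a single quadratic condition $\ipl{\lambda}{\lambda}=\ipl{\mu+\rho}{\mu+\rho}$ up to normalization) determines the lifted parameter only up to the large Weyl group $W_\fh$, and its image in $\D_l(\bX)$ need not be all of $\D_l(\bX)\simeq S(\fa)^W$; so "comparing infinitesimal characters" does not by itself cut the ambiguity down to the $W(\Sigma)$-orbit of $\mu+\rho$. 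To make your strategy work you must use the full algebra $\D_l(\bX)$: since $\pi_\mu(u)e_\mu\in V_\mu^l=\C e_\mu$ for $u\in U(\fg)^K$, the matrix coefficient $\check\psi_{\mu,l}$ is indeed a joint eigenfunction of $\D_l(\bX)$, but you must then compute the eigenvalue homomorphism and show it equals $\zeta_l(\,\cdot\,;\mu+\rho)$. That computation is carried out by writing $e_\mu=P_{\mu,l}u_\mu$ for a highest weight vector $u_\mu$ normalized by $\ip{u_\mu}{e_\mu}=1$ and using the Iwasawa decomposition --- which is exactly the content you have deferred to "bookkeeping," and it is where the shift by $\rho$ (rather than $\rho_\fh$ or $\rho_0$) actually gets produced.

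For comparison, the paper bypasses the classification of spherical functions entirely and does this computation directly: for $g\in G$,
\begin{equation*}
\check\psi_{\mu,l}(g)=\ip{\pi_\mu(g^{-1})e_\mu}{e_\mu}
=\int_K\ip{\chi_l(k)^{-1}\pi_\mu(g^{-1}k)u_\mu}{e_\mu}\,dk
=\int_K a(g^{-1}k)^{\mu}\,\chi_l\bigl(\kappa(g^{-1}k)k^{-1}\bigr)\,dk
=\varphi_{\mu+\rho,l}(g),
\end{equation*}
using that $u_\mu$ is fixed by $N$, transforms by $a^{\mu}$ under $A$, and that $\ip{\pi_\mu(k')u_\mu}{e_\mu}=\chi_l(k')$. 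This three-line argument simultaneously proves the functional-equation statement you establish separately and identifies the parameter, so if you adopt your route you should either substitute this computation for the infinitesimal-character step, or replace that step by a comparison of leading exponents in the asymptotics along $A^+$ (the matrix coefficient grows like $a^{\mu}$, while $\varphi_{\lambda,l}(a)\sim c(\lambda,l)a^{\lambda-\rho}$), which also pins down $\lambda=\mu+\rho$ up to $W$ without invoking the centre.
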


\begin{proof} As $G$ is totally real in $G_\C$ it is enough to show
that $\check{\psi}_{\mu,l}|_G = \varphi_{\mu+\rho, l}$. Let $u_\mu\in V_{\mu,l}$ be a nonzero highest weight vector. Then $V_{\mu,l}$
is generated by $\pi_\mu (K_\C)\, u_\mu$.  As $K_\C A_\C N_\C$ is dense it follows that $\ip{u_\mu}{e_\mu}\not= 0$.
Choose $u_\mu$ so that $\ip{u_\mu}{e_\mu}=1$. Then $P_{\mu,l}u_\mu =e_\mu$. Thus for $g\in G$:
\begin{eqnarray*}
\check{\psi}_{\mu,l} (g)&=& \ip{\pi_\mu (g^{-1})e_\mu}{e_\mu}\\
&=& \int_K \ip{\chi_l(k)^{-1}\pi_\mu (g^{-1}k)u_\mu}{e_\mu}\, dk\\
&=&\int_K  a(g^{-1}k)^{\mu} \chi_l(\kappa\, (g^{-1}\, k)\, k^{-1}) \, dk\\
&=& \varphi_{\mu +\rho,\, l}(g)\, .
\end{eqnarray*}
\end{proof}
  
\subsection{Holomorphic extension and estimates for $\varphi_{\lambda,l}$}\label{s:holExt}
We refer to Chapter 5 in \cite{hs} and Section 2 in \cite{sh} for detailed discussion about invariant differential operators on $\cL_l$. Here we
just recall what we need. Let $\D_l (\bX)
\simeq \D_l (\bY)$ be the algebra of invariant differential operators
$D : C^\infty (\bX;\cL_l)\to C^\infty (\bX;\cL_l)$.  Let $U(\fg )^K$ be the $\Ad (K)$-invariant elements in
the universal enveloping algebra of $\fg_\C=\fu_\C$. Then there exists a surjective
map $u\mapsto D_u$ of $U(\fg )^K$ onto $\D_l (\bX)$. We denote by $\gamma_l : U(\fg )^K\to S(\fa )^W$ the
Harish-Chandra homomorphism. Here 
$S(\fa )^W$ is the commutative algebra of $W$-invariant polynomials on $\fa_\C^\ast$. Then
$\gamma_l$ induces a algebra isomorphism $\D_l (\bX)\simeq S(\fa )^W$, see \cite[Thm. 5.1.10]{hs}. Define
a homomorphism $\zeta_{\lambda ,l}: \D_l(\bX)\to \C$ by $\zeta_{\lambda ,l}(D)=\gamma_{l}(D)(\lambda )$.
We also write $\zeta_{l}(D;\lambda)$ for $\zeta_{\lambda, l}(D)$. 
We then have: 
\begin{lemma}[Theorem 3.2, \cite{sh}] Let $D\in \D_l (\bX)$ and $\lambda \in \fa_\C^*$. Then
\begin{equation}\label{eq:sDiff}
D\varphi_{\lambda, l}=\zeta_{l}(D;\lambda)\varphi_{\lambda,\, l}\, .
\end{equation}
If $\varphi\in C^\infty(U//K;\cL_l)$ is a solution of the system of differential equations (\ref{eq:sDiff}) and
$\varphi (e)=1$, then $\varphi =\varphi_{\lambda,\, l}$.
\end{lemma}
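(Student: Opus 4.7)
The plan is to address the two assertions separately. For the eigenfunction equation, I would work directly with the integral formula (\ref{eq5}) and push $D_u$ through the integral sign (justified by compactness of $K$ and smoothness of the integrand in $g$). For each fixed $k\in K$, the integrand $g\mapsto a(g^{-1}k)^{\lambda-\rho}\chi_l(\kappa(g^{-1}k)k^{-1})$ is, after an Iwasawa factorization and the substitution $g\mapsto gk$, essentially the quasi-exponential $q_{\lambda,l}(g):=a(g^{-1})^{\lambda-\rho}\chi_l(\kappa(g^{-1}))$. The key observation is that $q_{\lambda,l}$ is right-$N$-invariant, right-$A$-equivariant with weight $\lambda-\rho$, and left-$K$-equivariant by $\chi_l$. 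Using the Iwasawa decomposition $U(\fg_\C)=U(\fa_\C)\oplus(\fk_\C U(\fg_\C)+U(\fg_\C)\fn_\C)$, write any $u\in U(\fg)^K$ modulo the second summand as an element of $S(\fa)$; after the $\rho$-shift built into the Harish-Chandra homomorphism this element acts on $q_{\lambda,l}$ by the scalar $\gamma_l(u)(\lambda)=\zeta_l(D_u;\lambda)$. Averaging over $K$ gives $D_u\varphi_{\lambda,l}=\zeta_l(D_u;\lambda)\varphi_{\lambda,l}$.

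For the uniqueness, my plan is to reduce to the radial part on $A$. By the Cartan decomposition and the $\chi_l$-bicovariance, any $\varphi\in C^\infty(U//K;\cL_l)$ is determined by $\varphi|_A$, which is $W$-invariant. On $A^+$ the system $D\varphi=\zeta_l(D;\lambda)\varphi$ becomes, via Harish-Chandra's radial part construction for line bundles (cf.\ Chapter~5 of \cite{hs} and Section~3 of \cite{sh}), the Heckman-Opdam hypergeometric system of rank $n$ with the multiplicity function determined by the restricted roots and a twist encoding $\chi_l$. At generic $\lambda$ the solution space near a regular point of $A$ is $|W|$-dimensional, spanned by Harish-Chandra series $\Phi_{w\lambda,l}$, $w\in W$. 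Smoothness across the walls of the Weyl chamber forces a $W$-symmetric combination and cuts the solution space down to a line; the normalization $\varphi(e)=1$ then pins $\varphi$ to $\varphi_{\lambda,l}$.

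The principal obstacle is the uniqueness step at non-generic $\lambda$, where the $W$-translates of $\lambda$ coincide and the Harish-Chandra series acquire logarithmic corrections or otherwise degenerate. I would handle this by holomorphic continuation: both sides of the identity $\varphi=\varphi_{\lambda,l}$ are holomorphic in $\lambda\in\fa_\C^*$ (the left side because solutions depend holomorphically on the parameter, and the right by Theorem~\ref{th:symm}(1)), so equality at generic $\lambda$ forces equality everywhere. An alternative route, used elsewhere in the paper, is to apply Opdam's shift operators to move to a multiplicity parameter where the generic argument applies directly; either approach suffices, but the shift-operator route has the advantage of giving a uniform argument across all $\chi_l$.
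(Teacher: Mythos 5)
The paper does not actually prove this lemma: it is imported verbatim as Theorem 3.2 of Shimeno \cite{sh} (the bracketed attribution in the lemma header is the ``proof''), so your attempt is being measured against Shimeno's argument rather than anything in this article. With that said, your outline of the eigenfunction identity is essentially the standard one and is sound: differentiating (\ref{eq5}) under the integral, reducing to the quasi-exponential $a(g^{-1})^{\lambda-\rho}\chi_l(\kappa(g^{-1}))$, and invoking the Iwasawa-type decomposition of $U(\fg)^K$ modulo $\fn_\C U(\fg_\C)$ and the $\fk_\C$-part (on which one must let $d\chi_l$ act, not zero --- this is exactly what the twisted Harish-Chandra homomorphism $\gamma_l$ is built to absorb) is how Shimeno and Heckman--Schlichtkrull, Chapter 5, do it.

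The genuine gap is in your treatment of uniqueness at non-generic $\lambda$. The holomorphic continuation step is ill-posed: the hypothetical second solution $\varphi$ is a single function attached to one fixed $\lambda_0$, not a member of a family depending holomorphically on $\lambda$, so there is no identity $\varphi=\varphi_{\lambda,l}$ of holomorphic functions of $\lambda$ to propagate from generic parameters. Asserting that ``solutions depend holomorphically on the parameter'' presupposes exactly the one-dimensionality you are trying to prove. Moreover, the dimension of the space of solutions regular at $e$ can in principle jump \emph{up} at resonant parameters (as it threatens to do already for the Gauss equation when exponent differences are integral), so genericity plus continuity does not settle the degenerate case. The correct route is the one Shimeno takes via Heckman--Opdam: the radial system is a holonomic system of rank $|W|$ with regular singularities, and the space of $W$-invariant solutions analytic at the identity is exactly one-dimensional for \emph{all} $\lambda$ in the regular parameter range (see \cite{hs}, Part I, Theorems 4.2.5 and 4.4.2, transported to the multiplicities $m_\pm(l)$ of (\ref{mplus})--(\ref{mminus})); alternatively Harish-Chandra's functional-equation argument works uniformly in $\lambda$. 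Your fallback via shift operators could also be made to work, but as stated it is only a pointer, not an argument.
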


For $l\in \Z$ define 
\begin{equation}
\label{eq:eta}
\eta_l^+ := \prod_{\alpha \in \mathcal{O}_s^+} \left(\frac{e^\alpha + e^{- \alpha}}{2}\right)^{2|l|},\qquad 
\eta_l^- := \prod_{\alpha \in \mathcal{O}_s^+} \left(\frac{e^\alpha + e^{- \alpha}}{2}\right)^{-2|l|}\, .
\end{equation}
We will write $\eta_l$ for $\eta_l^+$.

Recall that a multiplicity function $m$ is a $W$-invariant functions $m: \Sigma \to \C$. In our case it can only take three
values $m=(m_s,m_m,m_l)$ \footnote{Our multiplicity notation is different from the one used by Heckman and Opdam.
The root system $R$ they use is related to our
$2\Sigma$, and the multiplicity function $k$ in Heckman and Opdam's work is related to our $m$ by 
$k_{2\alpha}=\frac{1}{2}m_\alpha$.}. It is possible that one or more of those
number is zero. For $l \in \Z$ define multiplicity functions 

\begin{eqnarray}
m_+(l)&=& (m_s-2|l|,m_m,m_l+2|l|)\label{mplus}\\
m_-(l)&=& (m_s+ 2|l|,m_m,m_l-2|l|)\label{mminus}\, .
\end{eqnarray}
We will also write $m (l)$ for $m_+(l)$.

Note that the radial part of the Laplace-Beltrami operator on $\bX$ (acting on $\chi_l$-covariant functions) is 
exactly the operator
$$L\, (l) = L\, (m (l)) := \sum_{j = 1}^n\, \partial_j^2 + \sum_{\alpha \in \Sigma^+}\, m (l)_\alpha\, \frac{1 + e^{- 2
\alpha}}{1 - e^{-2 \alpha}}\, \partial_\alpha$$
associated with the root system $\Sigma$ and the multiplicity $m (l)$. This operator is actually defined on 
$A_{\C}^{\text{reg}}=\exp (\fa^{\text{reg}}_\C)$. We write $\rho (l) = \rho\, (m\, (l))$. It was shown that 
$$\zeta_l\, (L\, (l);\, \lambda) = (\lambda,\, \lambda) - ( \rho (l),\, \rho (l) ).$$

\begin{theorem}\label{th:estim} Let 
\begin{equation}
\label{eq:Omega}
\Omega=\{X\in \fb\; \mid\; (\forall \alpha \in \Sigma)\,\, |\alpha (X)|<\pi\}.
\end{equation}
The function $A\times \fa_\C^*\to \C$,
$(a,\lambda )\mapsto \varphi_{\lambda,l}(a)$, extends to a holomorphic function
$(b,\lambda )\mapsto \varphi_{\lambda,l}(b)$ on $A\exp (\Omega )\times \fa_\C^*$.
The extension satisfies the symmetry conditions in Theorem \ref{th:symm}.
Furthermore there exists a constant $C>0$ such that for $X\in \Omega$ we have
\[|\varphi_{\lambda ,l}(\exp X)|\le C e^{\|X\|\|\re (\lambda )\|}\, .\]
\end{theorem}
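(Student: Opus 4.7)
The plan is to reduce Theorem~\ref{th:estim} to the new estimate for Heckman-Opdam hypergeometric functions proven in Proposition~\ref{hypfun:estim} of Appendix~\ref{hyper}: I identify $\varphi_{\lambda,l}|_A$ with a hypergeometric function, apply the appendix result, and then transfer the extension and symmetries back to $\varphi_{\lambda,l}$.

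First I would restrict $\varphi_{\lambda,l}$ to the Cartan subgroup $A$. By $\chi_l$-bicovariance and the decomposition $G=KAK$, together with the Weyl group invariance from Theorem~\ref{th:symm}(3), the function $\varphi_{\lambda,l}|_A$ is $W$-invariant and real-analytic on $A$, and takes the value $1$ at the identity. Using the eigenvalue equations (\ref{eq:sDiff}) for all $D\in\D_l(\bX)$, the Harish-Chandra isomorphism $\D_l(\bX)\simeq S(\fa)^W$, and the stated identification of the radial part of the Laplace-Beltrami operator on $\chi_l$-covariant functions with the Heckman-Opdam operator $L(l)$ associated to $(\Sigma,m(l))$, the restriction $\varphi_{\lambda,l}|_A$ is a $W$-invariant solution of the full commuting Heckman-Opdam hypergeometric system with parameters $(\Sigma,m(l))$, where $m(l)=(m_s-2|l|,\,m_m,\,m_l+2|l|)$. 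Uniqueness of the normalized solution forces this restriction to equal the Heckman-Opdam hypergeometric function $F(\lambda,m(l);\cdot)$, as was already observed in \cite{hs}.

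Next I would invoke Proposition~\ref{hypfun:estim}, which is tailored to exactly this multiplicity: it allows the possibly negative entry $m_s-2|l|$ that appears when $2|l|>m_s$, and it extends the domain of validity from that of \cite{op} to the doubled region $\Omega$. It supplies both a holomorphic extension of $F(\lambda,m(l);\cdot)$ to $A\exp(\Omega)\subset A_\C$ and the bound $|F(\lambda,m(l);\exp X)|\le C\,e^{\|X\|\,\|\re\lambda\|}$ for $X\in\Omega$. Joint holomorphy on $A\exp(\Omega)\times\fa_\C^*$ follows from the standard holomorphic dependence of the hypergeometric function on the spectral parameter, and the symmetry conditions of Theorem~\ref{th:symm}(2)--(5) transfer from $A$ to the holomorphic extension by analytic continuation, since both sides are holomorphic on the connected set $A\exp(\Omega)\times\fa_\C^*$.

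The main obstacle is of course hidden inside Proposition~\ref{hypfun:estim} itself: Opdam's original convexity and majorization arguments in \cite{op} require all multiplicities to be positive and simply do not apply when $m_s-2|l|<0$, so the appendix must introduce a modified technique that simultaneously doubles the domain and accommodates the negative multiplicity. Once that appendix is in hand, the theorem at this point reduces to bookkeeping, translating a hypergeometric statement back into one about spherical functions of type $\chi_l$ on $G$.
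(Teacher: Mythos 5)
Your overall strategy is the paper's: restrict to $A$, identify the restriction with a Heckman--Opdam hypergeometric function for the shifted multiplicity $m(l)$, and quote Proposition~\ref{hypfun:estim}. But there is a genuine error in the identification step. It is not true that $\varphi_{\lambda,l}|_A=F(\lambda,m(l);\cdot)$, and consequently your uniqueness argument ("$\varphi_{\lambda,l}|_A$ is a $W$-invariant normalized solution of the hypergeometric system for $(\Sigma,m(l))$, hence equals $F$") does not apply to $\varphi_{\lambda,l}|_A$ itself. The correct statement, which is \cite[Theorem 5.2.2]{hs} and is recorded in the appendix, is
\[
\varphi_{\lambda,l}|_A=\eta_l^{\pm}\,F(\lambda,m_\pm(l);\cdot),\qquad
\eta_l^{\pm}=\prod_{\alpha\in\mathcal{O}_s^+}\Bigl(\tfrac{e^\alpha+e^{-\alpha}}{2}\Bigr)^{\pm 2|l|}.
\]
The radial parts of the operators in $\D_l(\bX)$ become the hypergeometric system for $(\Sigma,m(l))$ only after conjugation by the explicit factor $\eta_l$; it is $\eta_l^{-}\,\varphi_{\lambda,l}|_A$, not $\varphi_{\lambda,l}|_A$, that solves that system and is caught by the uniqueness of the normalized $W$-invariant solution. (Both functions equal $1$ at $e$ since $\eta_l(e)=1$, which is why the error is easy to overlook.)

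The omission is repairable, and the repair is precisely what the paper's proof points to via Remark~\ref{eta:estim}: $\eta_l=\eta_l^{+}$ is a nonnegative integer power of $\prod_{\alpha\in\mathcal{O}_s^+}\cosh\alpha$, hence extends holomorphically to all of $A_\C$, is $W$-invariant, and satisfies $|\eta_l(\exp X)|=\prod_{\alpha\in\mathcal{O}_s^+}|\cos\im\alpha(X)|^{2|l|}\le 1$ for $X\in\Omega$ because $\alpha(\fb)\subset i\R$. Multiplying the estimate of Proposition~\ref{hypfun:estim} by this bounded holomorphic factor gives the claimed extension, symmetries, and exponential bound for $\varphi_{\lambda,l}$. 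You should insert this factor explicitly; note also that it matters that one uses $\eta_l^{+}$ together with $m_+(l)$ here (the other choice $\eta_l^{-}$ has poles on $A\exp(\Omega)$ where $\cosh\alpha=0$, so it cannot be used for the holomorphic extension to the full domain). A final small point: Proposition~\ref{hypfun:estim} yields a bound of the form $C\exp(\max_{w}\re w\lambda(X)-\min_w\im w\lambda(Y))$, so a line converting this into $Ce^{\|X\|\|\re\lambda\|}$ for $X\in\Omega\subset\fb$ via Cauchy--Schwarz and $W$-invariance of the norm is needed; you assert the final form of the bound directly without this translation.
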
 

\begin{proof}
This is done in the Appendix. The estimate for $\varphi_{\lambda ,l}$ follows from Remark \ref{eta:estim} and Proposition \ref{hypfun:estim}.
\end{proof}

Let $\eps > 0$ and $\Omega_\eps=\{X\in \fb\mid (\forall \alpha \in \Sigma)\,\, |\alpha (X)|\leq \pi - \eps\}$. 
Then $\Omega = \cup_{\eps > 0}\, \Omega_\eps$.
Let $Y_j\in \fa$ be such that $\eps_i(Y_j)=\delta_{i,j}$. For $Y\in \fa_\C$ write
\[Y=\sum_{j=1}^n y_j Y_j=\sum_{j=1}^n \eps_j(Y)Y_j\, .\]
We have
\[\fa+\Omega_{\eps}=\{Y\in \fa_\C\; \mid\;  | \im\; y_j|\leq \frac{1}{2}(\pi -\eps)\}\, .\]

\section{The Paley-Wiener Theorem}\label{s:PW}
\noindent
Let $\|\; \,\|$ be the norm on $\g{u}$ with respect to the inner product $\ip{\,\cdot\,}{\,\cdot\,}$ defined earlier using
the Cartan-Killing form.
For $r > 0$, let $B_r\, (0) = \{X \in \g{q}:\; \|X\| < r\}$ be the open ball in $\g{q}$ centered at
$0$ with radius $r$.  Let $B_r(x_o)=\Exp (B_r(0))$. We will fix $R>0$ such that $R$ is smaller than the
injectivity radius and $B_R(0)\cap \fb\subset \Omega$. In particular $\Exp : B_r(0)\to B_r(x_o)$ is a diffeomorphism
and $\varphi_{\lambda ,l} $ is well defined on the closure of $B_r(x_o)$ for all $0<r<R$. We therefore let 
$\overline B_r(0)$ be the closed ball in $\fq$ with
radius $r$ and $\overline B_r(x_o)=\Exp (\overline B_r(0))$ the closure of $B_r(x_o)$. Finally 
we let $C^\infty_r(U//K; \cL_l)$ be the space of functions in $C^\infty (U// K;\cL_l)$ with support in
$\overline B_r(x_o)$. As $R$ is smaller than the injectivity radius it follows that for $0<r<R$ we have 
\begin{equation}\label{eq:res}
C_r^\infty\, (B)^W \cong C_r^\infty\, (B \cdot o)^W
\quad\text{ and }\quad C_r^\infty\, (U / / K;\, \cL_l) \xlongrightarrow{\cong} \eta_l \cdot C_r^\infty\, (B)^W
\, .
\end{equation}

For $r > 0$ denote by $\PW_r (\g{b}_{\C}^\ast)$ the space of holomorphic functions $\varphi$ on
$\g{b}_\C^\ast$ of exponential type $r$. Thus a holomorphic function $F$ on
$\fb_\C^\ast$ is in $\PW_r (\fb_\C^\ast)$ if and only if
  for every $k \in \mathbb{N}$, there is a constant $C_k$ such that
$$|\varphi\, (\lambda)| \leq C_k\, (1 + \|\lambda\|)^{- k}\, e^{r\, \|\re\, \lambda\|},\quad \forall\, \lambda \in
\g{b}_\C^\ast\, .$$

There are two natural actions of the Weyl group. The first one is the usual conjugation
of the variable, and the second is the $\rho$-shifted affine action 
$R(w)F(\lambda )=F(w^{-1}(\lambda +\rho )-\rho)$. Let
\[\PW_r(\fb_\C^\ast)^{R(W)}=\{F\in \PW_r(\fb_\C^\ast)\mid (\forall w\in W)\,\,
R(w)F=F\}\, .\] 
We note that this is the same Paley-Wiener space as in \cite{os}.  

Similarly one defines the space $\PW_r(\fb_\C^\ast)^W$ where we now use the standard
action of the Weyl group. We note that those spaces are isomorphic via the map
\[F\mapsto \Psi (F): \lambda \mapsto F(\lambda -\rho)\]
with inverse
\[G\mapsto \Psi^{-1}(G): \lambda \mapsto G(\lambda +\rho)\, .\]
To see that $\Psi (F)$ is $W$-invariant a simple calculation gives:
\[\Psi (F)(w\lambda )= F(w\lambda - \rho)
=F(w(\lambda -\rho + \rho)-\rho)=
F(\lambda -\rho )=\Psi (F)(\lambda )\, .\]
Similarly for $\Psi^{-1}(G)$.

We will use this isomorphism in the following to connect results from \cite{ow} on
restriction of Paley-Wiener spaces to subspaces without comment that one has sometimes to
use the above isomorphism.

\begin{theorem}[Paley-Wiener Theorem]
\label{thm1}
The (extended) $\chi_l$-spherical Fourier transform $\mS_l$ gives a linear bijection
\begin{equation}
\label{eq22}
\mS_l: C_r^\infty\, (U / / K;\; \cL_l)\; \stackrel{\cong}{\longrightarrow}\; \PW_r\, (\g{b}_{\C}^\ast)^{R(W)}
\end{equation}
for each $0 < r < R$. Precisely,
\begin{enumerate}
\item If $f \in C_r^\infty\, (U / / K;\, \cL_l)$, then $\mS_l\, (f): \Lambda_l^+ \to \C$ extends to a function in
$\PW_r\, (\g{b}_{\C}^\ast)^{R(W)}$;
\item Let $\varphi \in \PW_r\, (\g{b}_{\C}^\ast)^{R(W)}$. There exists a unique $f \in C_r^\infty\, (U//K;\, \cL_l)$
such that 
\[\mS_l\, (f)\, (\mu) = \varphi\, (\mu),\quad \forall\, \mu \in \Lambda_l^+;\]
\item The functions in $\PW_r\, (\g{b}_{\C}^\ast)^{R(W)}$ are uniquely determined by their values on
$\Lambda_l^+$.
\end{enumerate}
\end{theorem}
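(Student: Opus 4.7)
The injectivity of $\mS_l$ on $C_r^\infty(U//K;\cL_l)$ is immediate from the Plancherel theorem (Theorem~\ref{pro13}): if $\mS_l(f)(\mu)=0$ for every $\mu\in\Lambda_l^+$, all Fourier coefficients of $f$ vanish, so $f$ is zero in $L^2$ and hence everywhere by smoothness. The substantive content of Theorem~\ref{thm1} lies in parts~(1), (2), and (3).

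The plan for part~(1) is to combine Lemma~\ref{lem1}, which identifies $\psi_{\mu,l}(u^{-1})$ with the restriction of $\varphi_{\mu+\rho,l}$ to $U$, with the integration formula of Lemma~\ref{le:int1} and the $\chi_l$-bicovariance of $f$, producing
\[
\mS_l(f)(\mu) \;=\; c\int_{\fb}\tilde{f}(H)\,\varphi_{\mu+\rho,l}(\exp H)\,\delta(\exp H)\,dH,
\]
with $\tilde{f}$ supported in $\overline{B}_r(0)\cap\fb\subset\Omega$ (by the choice of $R$). Theorem~\ref{th:estim} then gives the holomorphic extension in $\mu\in\fb_\C^\ast$ and the bound $|\mS_l(f)(\mu)|\le C\,e^{r\|\re\mu\|}$. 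To upgrade this to Paley-Wiener decay, I would use the Harish-Chandra isomorphism $\gamma_l:\D_l(\bX)\to S(\fa)^W$: for each $p\in S(\fa)^W$, formula~(\ref{eq:sDiff}) and integration by parts yield an identity of the form $p(\mu+\rho)\,\mS_l(f)(\mu)=\mS_l(D^{\sharp}f)(\mu)$ with $D^{\sharp}f$ smooth and supported in $\overline{B}_r(x_o)$, so the exponential bound of the previous step applies to it and we obtain decay of every polynomial order. The $R(W)$-invariance is automatic from Theorem~\ref{th:symm}(2): $\varphi_{w\lambda,l}=\varphi_{\lambda,l}$ forces $\mS_l(f)(w^{-1}(\mu+\rho)-\rho)=\mS_l(f)(\mu)$.

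For parts~(2) and~(3), my plan is to reduce surjectivity to the scalar Paley-Wiener theorem recalled in Section~\ref{s:pwtgp}. The isomorphism~(\ref{eq:res}) identifies a $\chi_l$-bicovariant section on $U$ with $\eta_l$ times a $W$-invariant smooth function on the torus $B$, while Proposition~\ref{pro2} aligns the spectral parameter with $\Lambda_0^+$ via the shift by $2|l|\rho_s$. Given $\varphi\in\PW_r(\fb_\C^\ast)^{R(W)}$, these operations convert it into input data for the spherical Paley-Wiener theorem, which supplies a $W$-invariant smooth function on $B$ supported in the corresponding ball; multiplying by $\eta_l$ and lifting back to $U$ yields the desired preimage $f\in C_r^\infty(U//K;\cL_l)$. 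Part~(3) then follows because two elements of $\PW_r(\fb_\C^\ast)^{R(W)}$ agreeing on $\Lambda_l^+$ pull back to Paley-Wiener functions agreeing on $\Lambda_0^+$, where uniqueness is already known.

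The hardest step, in my view, is controlling the effect of the $\eta_l$ transfer on the exponential type and on support: multiplication by $\eta_l$ (or division by it) could in principle introduce extra growth in $\lambda$ through the zeros of $\eta_l^{-1}$, and the support of $\eta_l\cdot g$ lifted to $U$ must genuinely sit inside $\overline{B}_r(x_o)$. This forces Theorem~\ref{th:estim} to hold on a domain large enough to accommodate every $r<R$, which is precisely where the enlarged hypergeometric estimate of Proposition~\ref{hypfun:estim} (Appendix~\ref{hyper}), rather than Opdam's classical bound in \cite{op}, becomes indispensable.
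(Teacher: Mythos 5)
Your treatment of part (1) follows the paper's argument essentially verbatim: reduction to an integral over $B$ against $\varphi_{\mu+\rho,l}$, holomorphic extension and the exponential bound from Theorem \ref{th:estim}, polynomial decay via invariant differential operators and $\gamma_l$, and $R(W)$-invariance from Theorem \ref{th:symm}. Part (3) is acceptable in substance, though the actual mechanism --- a several-variable Carleson-type uniqueness theorem applied after identifying $\Lambda_l^+$ with $(\Z^+)^n$ via (\ref{eq:gLiso}) --- should be named rather than summarized as ``uniqueness is already known.''

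The genuine gap is in part (2). Your reduction rests on the premise that multiplying a $W$-invariant function $g$ on $B$ by $\eta_l$ and shifting the spectral parameter by $2|l|\rho_s$ intertwines the $l=0$ spherical transform with $\cS_l$. That is false: by the appendix, $\varphi_{\lambda,l}|_A=\eta_l\,F(\lambda,m(l);\cdot)$, where $F(\cdot,m(l);\cdot)$ is the hypergeometric function for the \emph{shifted multiplicity} $m(l)=(m_s-2|l|,m_m,m_l+2|l|)$; this is not the $l=0$ spherical function evaluated at a translated parameter. Proposition \ref{pro2} relates only the parameter sets $\Lambda_l^+$ and $\Lambda_0^+$, not the transforms, and the assertion that $\cS_0^{-1}\circ\cS_l$ preserves small supports is Corollary \ref{co:4.3} --- a \emph{consequence} of the theorem --- so invoking it here would be circular. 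The paper's surjectivity proof takes a different route entirely: it reduces to Gonzalez's Paley--Wiener theorem for central functions on the group $U$ (Theorem \ref{th:PWgroup}), using the averaging operator $Q_l(f)(u)=\int_K f(uk)\chi_l(k)\,dk$ (Lemma \ref{lemma1}, which also controls supports), Cowling's theorem to extend $\Psi(F)$ from $\fa_\C^*$ to a Paley--Wiener function on $\fh_\C^*$, the Rais--Clozel--Delorme decomposition $E=\sum_j P_jG_j$ over $W_\fh$-invariant Paley--Wiener functions, Lemmas \ref{lem:mu0} and \ref{lem:inv} to handle the $\mu_0$ and $\rho_0$ shifts under restriction to $\fa_\C^*$, and finally invariant differential operators $D_j$ to absorb the polynomial factors without enlarging supports. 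None of these ingredients, nor substitutes for them, appear in your plan, so the surjectivity is not established.
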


\begin{corollary}\label{co:4.3} Let $l,k\in\Z$ and $0<r<R$. Then 
\[\cS_k^{-1}\circ \cS_l :C_r^\infty (U// K; \cL_l) \cong C_r^\infty\, (U//K;\, \cL_k)\]
is a linear isomorphism. 
\end{corollary}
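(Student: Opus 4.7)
The plan is to invoke Theorem \ref{thm1} directly and observe that it does all the work. The crucial point to highlight is that the target Paley--Wiener space $\PW_r(\fb_\C^*)^{R(W)}$ in the bijection (\ref{eq22}) depends only on the radius $r$ and on the root system data (through the Weyl group action and the shift by $\rho$), but \emph{not} on the character $\chi_l$ itself. Once this is recorded, the corollary is a one-line consequence.

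More precisely, I would first apply Theorem \ref{thm1} with the parameter $l$ to obtain the linear bijection
\[
\mS_l : C_r^\infty(U//K;\,\cL_l) \xrightarrow{\ \cong\ } \PW_r(\fb_\C^*)^{R(W)},
\]
where $\mS_l(f)$ is understood (as in the statement of Theorem \ref{thm1}) as the unique holomorphic extension of the function $\mu \mapsto \mS_l(f)(\mu)$ on $\Lambda_l^+$ to an element of $\PW_r(\fb_\C^*)^{R(W)}$; uniqueness of this extension is guaranteed by part (3) of Theorem \ref{thm1}. Next, apply Theorem \ref{thm1} a second time with the parameter $k$ in place of $l$ to obtain a second linear bijection
\[
\mS_k : C_r^\infty(U//K;\,\cL_k) \xrightarrow{\ \cong\ } \PW_r(\fb_\C^*)^{R(W)}
\]
with the \emph{same} target space. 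In particular $\mS_k^{-1}$ is a well-defined linear bijection from $\PW_r(\fb_\C^*)^{R(W)}$ onto $C_r^\infty(U//K;\,\cL_k)$.

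Finally, the composition
\[
\mS_k^{-1} \circ \mS_l : C_r^\infty(U//K;\,\cL_l) \longrightarrow C_r^\infty(U//K;\,\cL_k)
\]
is a composition of two linear bijections, hence a linear bijection. Since there is no actual work beyond identifying the common target, there is no genuine obstacle; the only point that might benefit from a sentence of explanation is the character-independence of $\PW_r(\fb_\C^*)^{R(W)}$, which is visible from its definition (a space of holomorphic functions on $\fb_\C^*$ of exponential type $r$, symmetric under the $\rho$-shifted action $R(w)$ of the Weyl group), and from the fact that in Theorem \ref{thm1} the dependence on $l$ is absorbed entirely into the definition of $\mS_l$ and into the spectrum $\Lambda_l^+$ on which values are prescribed.
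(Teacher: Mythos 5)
Your proposal is correct and is exactly the paper's argument: the authors also prove this corollary by applying Theorem \ref{thm1} to both $l$ and $k$ and composing the two bijections through the common Paley--Wiener space $\PW_r(\fb_\C^*)^{R(W)}$. Your additional remark that the target space is independent of the character is a helpful (and accurate) elaboration, but nothing beyond the paper's one-line proof is needed.
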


\begin{proof}
This follows from Theorem \ref{thm1} applied to $l$ and $k$.
\end{proof}

\begin{remark}{\rm 1) As remarked in \cite[Remark 4.3]{os} one can use different $R$ in (1), (2) and (3), 
and then  take the minimum of those constants for the map (\ref{eq22}) to be a bijection. 
\smallskip

2) In \cite{os} the authors  used for $\Omega$ the domain where
$|\alpha (X)|\leq \pi/2$. This is because \cite{os} used the Opdam estimates \cite{op} which were shown for this domain.
\smallskip

3) We note that  $C^\infty_r(\bY)^K=C^\infty_r(U// K; \cL_0)$ so this case is cowered in the corollary.}
\end{remark}

The hard part of Theorem \ref{thm1} is (2) so we start with (1) and (3) and leave (2)
for the next section. The proof follows closely \cite{os}. 

\begin{proof}(Part (1)) Let $\mu \in \gL^+_l$ and $f\in C_r^\infty (U//K; \cL_l)$. It is easy to see that the function 
$u\mapsto f(u)\overline{\psi_{\mu, l} (u)}$ is $K$-biinvariant. Using Lemma \ref{le:int1} and using that $-1\in W$, we get
\begin{eqnarray*}
\cS_l(f)(\mu)&=&\frac{1}{|W|} \int_B f(b)\delta (b)\overline{\psi_{\mu, l} (b)} \, db\\
 &=& 
\frac{1}{|W|} \int_B [f(b)\delta (b)]\psi_{\mu, l} (b )\, db\\
&=&
\frac{1}{|W|} \int_B [f(b)\delta (b)]\varphi_{\mu+\rho, l} (b )\, db\, .
\end{eqnarray*}

As $\supp (f|_{B\cdot x_o})\subseteq \exp (\Omega )\cdot x_o$ we can, using Theorem \ref{th:estim}
define the holomorphic extension of $\cS_l (f)$ to $\fa_{\C}^\ast$ by 
\[\lambda \mapsto \cS_l (f)(\lambda) =\frac{1}{|W|} \int_B [f(b)\delta (b)]\varphi_{\lambda+\rho,l} (b )\, db\, .\]
Then, by the $W$-invariance of $\varphi_{\lambda,\, l}$ it follows that
$\cS_l (f)(w(\lambda +\rho )-\rho)=\cS_l (f)(\lambda)$.

As $b \mapsto f(b)\delta (b)$ is in $C^\infty_r (B)^W$ it follows again from Theorem \ref{th:estim}
that 
\[|\cS_l (f)(\lambda) |\le C e^{r\|\re\, \lambda\|}\, .\]
The polynomial estimate follows by applying $D\in\cD_l (\bY)$ to $\varphi_{\lambda,l}$ and noticing that 
\[\zeta_l(D;\lambda )\cS_l(f) (\lambda )=\int_{\bY} f(y) D \varphi_{\lambda,l} (y^{-1})\, dy
= \int_\bY D^* f(y) \varphi_{\lambda,l} (y^{-1})\, dy
=\cS_l(D^* f) (\lambda)
\] 
 where $D^*$ is the adjoint of $D$. 
\end{proof}

\begin{proof}(Part (3)) This follows from a generalization of Carleson's theorem for
higher dimensions, see \cite[Lem. 7.1]{os}. Here we use the fundamental weights
$\omega_1,\ldots ,\omega_n$ and (\ref{eq:gLiso}) to view $\cS_l(f)$ as a function
on $\C^n$. Denote by $\|\lambda \|_0$ the standard norm on $\C^n$. Then there exists
$C>0$ such that $ \|\lambda\|\le C\|\lambda \|_0$. It follows that there exists $C_1>0$ such
that
\[ |\cS_lf(\sum \lambda_j \omega_j + 2\, |l|\, \rho_s)|\le  C_1e^{(rC) \| \lambda \|_0}\, .\] 
Hence, if $rC<\pi$ and $\cS_lf(\sum k_j \omega_j + 2\, |l|\, \rho_s)=0 $ for all $k_j\in\Z^+$ then Carleson's theorem implies that $\cS_lf =0$ and hence the extension is unique.
\end{proof}

\section{The Surjectivity}
\label{s:bij}

\noindent
In this section we prove part (2) of Theorem \ref{thm1}.
The proof is reduced to the Paley-Wiener theorem
for central functions on compact Lie group originally proved by F. Gonzalez in \cite{go}. 
The reduction depends on a surjectivity criteria for restriction of Paley-Wiener spaces.

\subsection{The Paley-Wiener theorem for central functions on $U$}\label{s:pwtgp}
This is a special case of the Paley-Wiener theorem for compact symmetric spaces as $U\simeq U\times 
U/K$ where $K$ is the diagonal in $U \times U$, i.e. $K=\{(u,u)\mid u\in U\} \simeq U$. The corresponding involution is
$\tau (a,b)=(b,a)$ and the action of $U\times U$ on $U$ is $(a,b)\cdot u=aub^{-1}$. In particular we have
\[C^\infty (U)^U =\{f\in C^\infty (U)\mid (\forall u,k\in U)\, \, f(kuk^{-1})=f(u)\}\, . \]
The spherical functions on $U=U\times U/K$ are the normalized trace functions
\[\xi_\mu (u)=\frac{1}{d(\mu)}\, \Tr (\pi_\mu (u^{-1}))\, .\]
The noncompact dual is $G/K=U_\C/K$. The role of $\fa$ is played by the Cartan
subalgebra $i\, \fh$, $W$ by $W_\fh$, the Weyl group associated to $\Delta$, and $\gL_0^+$ by
$\gL^+(U)$, the semi-lattice of all highest weights of irreducible representations of
$U$. Finally, the spherical functions on $U_\C/K$ are given by, see \cite[Thm. 5.7, Chapter IV]{h2}:
\[\varphi_\lambda (a) =\frac{\pi (\rho (\fh ))}{\pi (\lambda )}\, \
\frac{\sum_{w\in W} (\det w)a^{w\lambda}}{\sum_{w\in W} (\det w)a^{w\rho_\fh}}\, ,\quad
\text{ where } \quad \pi (\mu )=\prod_{\alpha\in \Delta^+}\ip{\alpha}{\mu}\, .\]

The result of \cite{go} was used by \cite{os} to prove the surjectivity part of the Paley-Wiener theorem for $K$-invariant
functions on $U/K$. The simple reformulation corresponding to results of \cite{os}, i.e., using
the normalized trace function $\xi_\mu$ instead of the character $\Tr \circ \pi_\mu$, was done
in \cite[Lem. 5.4]{ow}. The formulation of Gonzalez theorem, in the form we need it, is then:

\begin{theorem}\label{th:PWgroup} Let $\mathrm{PW}_r(\fh_\C^\ast)^{R(W_\fh)}$ be the space of holomorphic functions on
$\fh_\C$ of exponential growth $r$ and such that $F(w(\lambda +\rho_\fh)-\rho_\fh)=
F(\lambda)$ for all $\lambda \in \fh_\C^*$ and $w \in W_\fh$. Then there exists a $R>0$ such that
for all $0<r<R$ the spherical Fourier transform 
\[\widetilde{f}(\lambda ) =\int_U f(u) \varphi_{\lambda + \rho}\, (u^{-1})\, du\]
is a surjective linear map $C_r^\infty(U)^U\to \mathrm{PW}_r(\fh_\C^*)^{R(W_\fh)}$.
\end{theorem}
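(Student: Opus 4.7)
The plan is to realize Theorem \ref{th:PWgroup} as a reformulation of Gonzalez's Paley--Wiener theorem \cite{go} for smooth central functions on the compact group $U$, rewritten with the normalized trace character $\xi_\mu$ in place of $\Tr\circ\pi_\mu$ as in \cite[Lem.~5.4]{ow}. The bridge to the framework of the previous sections is the identification $U\cong (U\times U)/K$ with $K=\{(u,u):u\in U\}$ the diagonal subgroup: under this identification, central functions on $U$ correspond to $K$-biinvariant functions on $U\times U$, the spherical functions for this symmetric pair are precisely the $\xi_\mu$ for $\mu\in\Lambda^+(U)$, and the Harish-Chandra spherical function $\varphi_{\lambda+\rho_\fh}$ on the noncompact dual specializes at $\lambda=\mu\in\Lambda^+(U)$ to $\xi_\mu$ by the Weyl character formula recalled in the excerpt.

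For the direction $\widetilde{f}\in\mathrm{PW}_r(\fh_\C^*)^{R(W_\fh)}$ whenever $f\in C^\infty_r(U)^U$, the argument parallels Part (1) of Theorem \ref{thm1}. Holomorphic extendability of $\lambda\mapsto\varphi_{\lambda+\rho_\fh}(a)$ is immediate from the Weyl character formula; exponential type $r$ follows from the estimate of $\varphi_{\lambda+\rho_\fh}(\exp X)$ for $X$ in a ball of radius smaller than the injectivity radius of $\exp\colon\fh\to B$; polynomial decay is obtained by integrating by parts against invariant differential operators via $\widetilde{D^*f}(\lambda)=\gamma(D)(\lambda+\rho_\fh)\widetilde{f}(\lambda)$; and the shifted $W_\fh$-invariance is inherent in $\varphi_{\lambda+\rho_\fh}$.

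The substantive content is surjectivity. Given $F\in\mathrm{PW}_r(\fh_\C^*)^{R(W_\fh)}$, the candidate preimage is
\[
f(u) = \sum_{\mu\in\Lambda^+(U)} d(\mu)^2\, F(\mu)\, \xi_\mu(u),
\]
whose convergence in $C^\infty(U)$ is ensured by the polynomial decay of $F$ against the polynomial growth of $d(\mu)$; that $\widetilde{f}=F$ on $\Lambda^+(U)$ is then a consequence of the orthogonality relations. The only nontrivial step is to show $\supp f\subseteq\overline{B}_r(e)$. Since $f$ is central it suffices to verify this on the maximal torus $B$. Multiplying through by the Weyl denominator converts $\xi_\mu|_B$ into an alternating exponential sum indexed by $W_\fh\cdot(\mu+\rho_\fh)$, and combining the sum over $\mu\in\Lambda^+(U)$ with this antisymmetrization produces a sum over the full shifted weight lattice $\Lambda(U)+\rho_\fh$; Poisson summation then rewrites this as an integral over $\fh_\C^*$ whose contour can be shifted by a large real multiple of the direction dictated by the target point, using the bound $|F(\lambda)|\lesssim e^{r\|\re\lambda\|}$ exactly as in the classical Paley--Wiener argument on $\fh$. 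The constant $R$ is determined by the injectivity radius of $\exp\colon\fh\to B$ and by the requirement that the contour deformation stay inside the domain of holomorphy of the integrand. I expect this contour-shift step, with the attendant bookkeeping of the Weyl denominator zeros and the alternating sums, to be the main technical obstacle.
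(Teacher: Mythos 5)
The paper offers no proof of Theorem \ref{th:PWgroup}: it is quoted as Gonzalez's Paley--Wiener theorem for central functions \cite{go}, in the reformulation with the normalized trace $\xi_\mu$ in place of $\Tr\circ\pi_\mu$ given in \cite[Lem.~5.4]{ow}. What you have written is therefore not a different route so much as an unrolling of that citation, and your outline is essentially Gonzalez's argument; it is correct in structure (the candidate preimage $f=\sum_\mu d(\mu)^2F(\mu)\xi_\mu$ is exactly the one the paper itself uses later in Lemma \ref{lemma1}, and $\widetilde f=F$ on $\Lambda^+(U)$ by orthogonality of characters). Three points in the support argument deserve to be made explicit rather than left to ``bookkeeping''. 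First, the passage from $\sum_{\mu\in\Lambda^+(U)}\sum_{w\in W_\fh}$ to a sum over the full shifted lattice $\Lambda(U)+\rho_\fh$ needs not only the Weyl denominator but also the Weyl dimension formula $d(\mu)=\pi(\mu+\rho_\fh)/\pi(\rho_\fh)$, which turns $d(\mu)\det(w)$ into $\pi(w(\mu+\rho_\fh))/\pi(\rho_\fh)$; the resulting coefficient $\nu\mapsto\pi(\nu)F(\nu-\rho_\fh)$ is a well-defined function on the whole lattice precisely because of the $R(W_\fh)$-invariance of $F$, and it vanishes at singular $\nu$ --- this is the one place the hypothesis on $F$ is actually used, so it should not be buried. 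Second, after Poisson summation no fresh contour-shift analysis is needed: the lattice sum is a sum of lattice translates of the Euclidean inverse Fourier transform of $\pi(\cdot)F(\cdot-\rho_\fh)$, which is supported in $\overline{B}_r(0)$ by the classical theorem \cite[Theorem 7.3.1]{hoe90}; the constant $R$ only has to keep the $r$-balls around distinct points of the unit lattice disjoint and stay below the injectivity radius, so the step you flag as the main obstacle is already packaged in the classical statement. Third, reducing the support question to the maximal torus uses that every conjugacy class meets the torus and that, for the bi-invariant metric, the distance from $k\exp(H)k^{-1}$ to $e$ equals $\|H\|$ for small $H$; and dividing out the Weyl denominator at the end is harmless because its zero set has empty interior and $f$ is continuous. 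With those points filled in, your proof is complete and matches the source the paper relies on.
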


\subsection{The Surjectivity of the $\chi_l$-Spherical Fourier Transform}
\label{s:surj}
It is easy to see that if $F\in \mathrm{PW}_r(\fb^*_\C)^{R(W)}$ then the function
\[f(u)=\sum_{\mu \in \gL_l^+} d(\mu ) F(\mu)\psi_{\mu, l}(u)\]
is smooth and $\cS_l (f)=F$. The hard part is to see that $\supp (f)\subseteq
\overline{B}_r(x_o)$.
To use Theorem \ref{th:PWgroup} we define
\[Q_l (f)(u)=\int_K f(uk)\chi_l (k)\, dk=
\int_K \chi_l (k)f(ku)\, dk\, ,\quad f\in C_r^\infty (U)^U\, .\]

\begin{lemma} 
\label{lemma1}
Assume that $ f\in C_r^\infty\,(U)^U$, then $Q_l(f)\in C_r^\infty (U// K,\cL_l)$.
Furthermore, if $F\in
\mathrm{PW}_r(\fh^*_\C)^{R(W_\fh)}$ and
$f=\sum_{\mu\in \gL^+(U)} d(\mu )^2 F(\mu )\xi_\mu$, then
\begin{equation}\label{eq:Ql}
Q_l(f)(u)=\sum_{\mu\in \gL_l^+} d((\mu,0,\mu_0))F( (\mu,0,\mu_0)) \psi_{\mu, l}(u)\, .
\end{equation}
\end{lemma}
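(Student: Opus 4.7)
The proof breaks into two independent parts, matching the two assertions.

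For the first, membership $Q_l(f)\in C^\infty_r(U//K;\cL_l)$: bicovariance is a direct change-of-variables computation. One writes
\begin{equation*}
Q_l(f)(k_1 u k_2)=\int_K f(k_1 u k_2 k)\chi_l(k)\,dk
\end{equation*}
and then successively substitutes $k\mapsto k_2^{-1}k$ (producing $\chi_l(k_2)^{-1}$), uses the $U$-conjugation invariance of $f$ in the form $f(k_1 u k')=f(u k' k_1)$, and substitutes $k'\mapsto k' k_1^{-1}$ (producing $\chi_l(k_1)^{-1}$) to arrive at $\chi_l(k_1 k_2)^{-1}Q_l(f)(u)$, which is exactly (\ref{eq:shper}). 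For the support, $Q_l(f)(u)\ne 0$ forces $uk\in\overline B_r(e)\subset U$ for some $k\in K$. Since the canonical projection $\pi\colon U\to\bY$ is a Riemannian submersion with respect to the metrics induced from the Cartan-Killing form, it is distance non-increasing, so
\[
d_\bY(uK,x_o)=d_\bY(\pi(uk),\pi(e))\leq d_U(uk,e)\leq r,
\]
giving $\supp Q_l(f)\subseteq \overline B_r(x_o)$.

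For the second assertion, I would insert $\xi_\lambda(u)=d(\lambda)^{-1}\Tr\pi_\lambda(u^{-1})$ into the given expansion of $f$ and commute the sum with the Haar integral and the trace (justified by the rapid decay of $F\in\PW_r$ against the polynomial growth of $d(\lambda)$), obtaining
\begin{equation*}
Q_l(f)(u)=\sum_{\lambda\in \gL^+(U)} d(\lambda) F(\lambda)\,\Tr\!\left(\pi_\lambda(u^{-1})\int_K \pi_\lambda(k^{-1})\chi_l(k)\,dk\right).
\end{equation*}
The substitution $k\mapsto k^{-1}$ identifies the inner integral with the orthogonal projector $P_{\lambda,l}$ from (\ref{Pchi}). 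Hence only $\chi_l$-spherical $\lambda=(\mu,0,\mu_0)\in\gL^+_l(U)$ contribute, and for such $\lambda$ the projector has rank one onto $\C e_\lambda$. A one-line computation in an orthonormal basis extending $e_\lambda$ then gives
\begin{equation*}
\Tr(\pi_\lambda(u^{-1})P_{\lambda,l})=\ip{\pi_\lambda(u^{-1})e_\lambda}{e_\lambda}=\ip{e_\lambda}{\pi_\lambda(u)e_\lambda}=\psi_{\mu,l}(u),
\end{equation*}
where the middle equality is unitarity and the last is the definition (\ref{eq:psi}). Reindexing by $\mu$ via the bijection $\gL^+_l(U)\cong\gL^+_l$, $\lambda\leftrightarrow \mu$, yields exactly (\ref{eq:Ql}).

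I expect no serious obstacles. The one subtle point is the support step, which relies on $\pi\colon U\to\bY$ being a Riemannian submersion for the chosen metrics, so that the size of the support of $f$ on $U$ transfers correctly to that of $Q_l(f)$ on $\bY$; the remaining manipulations are routine Haar-integral bookkeeping together with the Schur-type identification of the averaged representation as the isotypic projector.
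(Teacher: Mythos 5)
Your proof is correct and takes essentially the same approach as the paper: expand $f$ using the rapid decay of $F$, identify the $K$-average of $\xi_\mu$ with the rank-one projection $P_{\mu,l}$ of (\ref{Pchi}) via a trace computation in an orthonormal basis containing $e_{\mu,l}$, and reindex over $\gL_l^+(U)\cong\gL_l^+$. The only divergence is that the paper disposes of the support claim by citing Lemma 9.3 of \cite{os}, whereas you supply the underlying argument (the projection $U\to\bY$ is distance non-increasing for the Killing-form metrics), which is the same idea as in that reference.
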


\begin{proof}
If $f=\sum_{\mu\in \gL^+(U)} d(\mu )^2 F(\mu )\xi_\mu$ then $f$ is 
the inverse Fourier transform of $F$. Because of the
rapid decay of $F$ it follows that
\[Q_l(f)=\sum_{\mu\in \gL^+(U)} d(\mu )^2 F (\mu) Q_l(\xi_\mu )\, .\]
Note that the square in $d(\mu )^2$ comes from the fact that the representation
that we are in fact using in $V_\mu \otimes V_\mu^*$ has dimension
$d(\mu )^2$.

Recall that the $P_{\mu,l}v=\int_K \chi_l(k)^{-1}\pi_\mu (k)v$ is
the orthogonal projection $V_\mu \to V_\mu^l$. In particular, if
$\pi_\mu$ is not $\chi_l$-spherical, then $P_{\mu,l}(V_\mu)=0$. Fix an orthonormal
basis of $V_\mu$, say $v_1,\ldots , v_{d(\mu )}$. In case $V_\mu$ is $\chi_l$-spherical, we assume
that $v_1=e_{\mu,l}$. Then 
\begin{eqnarray*}
d(\mu ) \int_K \xi_\mu (uk)\chi_l(k)\, dk&=& \sum_{j=1}^{d (\mu )} 
\ip{v_j}{\int_K\, \chi_l(k^{-1})\pi_\mu (u)\pi_\mu (k)v_j\, d k}\\
&=&\sum_{i=1}^{d(\mu )}\ip{v_j}{\pi_{\mu}(u)P_{\mu ,l}v_j}\\
&=&\left\{\begin{matrix} 0 & \text{ if } & \pi_\mu \text{ is not } \chi_l \text{ spherical}\\
\psi_{\mu_{\fb},l} & \text{ if } & \pi_\mu \text{ is  } \chi_l \text{ spherical}
\end{matrix}\right.
\end{eqnarray*}
where $\mu_{\fb}$ is the projection of $\mu\in i\fh^*$ onto $i\fb^*$.  The claim (\ref{eq:Ql}) now 
follows from our description of 
$\gL_l^+(U)= \{(\mu , 0 , \mu_0)\mid \mu \in \gL^+_l\}$. The claim that $Q_l(f)$ is supported in a ball
of radius $r$ follows as Lemma 9.3 in \cite{os}.
\end{proof}

In our case there are extra factors $\mu_0$ and $\rho_0$ which would come into the picture of restriction of Paley-Wiener 
spaces to subspaces. We use some facts on restriction of Weyl groups to take care of these factors, as in the following lemmas.

Recall that $\rho_0 = \rho_\fh |_{\mathfrak{a}^\perp}$ and $\rho_\fh = \rho + \rho_0$. The map 
\[F  \longmapsto  \Psi_\mathfrak{h}\, (F),\; \Psi_\mathfrak{h}\, (F)\, (\lambda) = F\, (\lambda - \rho_\fh)\]
is an isomorphism from $\mathrm{PW}_r\, (\mathfrak{h}_{\mathbb{C}}^\ast)^{R\, (W_\fh)}$ onto 
$\mathrm{PW}_r\, (\mathfrak{h}_{\mathbb{C}}^\ast)^{W_\fh}$
with the inverse $\Psi_\mathfrak{h}^{-1}\, (F)\, (\lambda) = F\, (\lambda + \rho_\fh)$.
As mentioned earlier, we simply write $(0,\, 0,\, \mu_0)$ as $\mu_0$. The map 
\[F \longmapsto \Phi\, (F),\quad \Phi\, (F)\, (\lambda) = F\, (\lambda + \mu_0 + \rho_0)\]
is an isomorphism of $\mathrm{PW}_r(\fh_\C^*)$ onto itself.  

Recall that $\Delta_0 = \{\alpha \in \Delta\, \mid\, \alpha |_\fa = 0\}$. Let 
\begin{eqnarray*}
\widetilde{W} & = & \{w \in W_\fh\, \mid\, w\, (\fa_\C) = \fa_\C\}\\
W_0 & = & \{w \in W_\fh\, \mid\, w |_\fa = \text{Id}\}.
\end{eqnarray*}
We have $\widetilde{W} \subseteq W_\fh$ and $W_0 \subseteq \widetilde{W}$.  

\begin{lemma}
\label{lem:mu0}
If $w \in \widetilde{W}$, then $w\, \mu_0 = \mu_0$.
\end{lemma}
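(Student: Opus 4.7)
The plan is to reduce the statement to the $\Ad(K)$-invariance of the one-dimensional center $\fz = \R Z$ of $\fk$, via three steps.

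First I would re-express the functional using $Z$ in place of $X$: since $Z - X \in \fk_1 \cap \fh$ and the linear functional $\mu_0 = (0, 0, \mu_0) \in i\fh^\ast$ vanishes on $\fk_1 \cap \fh$ by construction, its value at $iX$ equals its value at $iZ$. Hence $\mu_0$ can be characterized as the element of $i\fh^\ast$ that vanishes on $\fb \oplus (\fk_1 \cap \fh)$ and takes value $\mu_0$ on $iZ$. It therefore suffices to show that $w$ preserves each summand of the orthogonal decomposition $\fh = \fb \oplus (\fk_1 \cap \fh) \oplus \R Z$, and, on the last summand, actually fixes $Z$ rather than merely preserving its line.

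Second I would verify preservation of the first two summands. The summand $\fb$ is preserved by the definition of $\widetilde{W}$. For $\fk_1 \cap \fh$, I would note that $w$ permutes $\Delta$, and since $w(\fa_\C) = \fa_\C$, it also permutes the subsystem $\Delta_0 = \{\alpha \in \Delta : \alpha|_\fa = 0\}$. The coroots of $\Delta_0$ are exactly the coroots of the semisimple algebra $\fk_1$ with respect to its compact Cartan $\fk_1 \cap \fh$, and therefore span $(\fk_1 \cap \fh)_\C$. Hence $w$ stabilizes $(\fk_1 \cap \fh)_\C$, and by isometry it also stabilizes the orthogonal complement $\C Z$ inside $\fh_\C$.

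Third, to upgrade $w(\C Z) = \C Z$ to $w Z = Z$, I would realize $w$ by a group element in $N_K(\fh) \cap N_K(\fb)$. Using the standard identification $W = N_K(\fb)/Z_K(\fb)$, pick $k_0 \in N_K(\fb)$ with $\Ad(k_0)|_\fb = w|_\fb$. The two subspaces $\Ad(k_0)(\fk \cap \fh)$ and $\fk \cap \fh$ are both Cartan subalgebras of the centralizer $Z_\fk(\fb)$, hence conjugate by some $k_1 \in Z_K(\fb)$; so $k := k_1 k_0$ lies in $N_K(\fh) \cap N_K(\fb)$. The element $w' := \Ad(k)|_\fh \in \widetilde{W}$ satisfies $w'|_\fb = w|_\fb$, whence $w^{-1} w' \in W_0 = W(\Delta_0)$. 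Each generating reflection $s_\alpha$ of $W_0$ fixes $Z$ because $\alpha(Z) = 0$ ($Z$ being central in $\fk$ and $\alpha \in \Delta_0$ a root of $\fk$), while $\Ad(k) Z = Z$ by the same centrality. Therefore $w Z = w' Z = \Ad(k) Z = Z$, and $w \mu_0 = \mu_0$ follows by evaluating both sides on each summand. The main obstacle is the construction of this $K$-representative $k$, which rests on the conjugacy of Cartan subalgebras inside the compact centralizer $Z_K(\fb)$.
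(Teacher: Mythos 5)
Your argument breaks at its first step, and the error propagates. You write $Z - X \in \fk_1 \cap \fh$ and then evaluate $\mu_0$ at $iZ$, which presupposes $Z \in \fh$. But $Z$ spans the center of $\fk$ and $\ad (Z)$ acts on $\fq_\C$ with eigenvalues $\pm i$ (it defines the complex structure), so $[Z,\fb]\neq 0$ and $Z$ does not centralize $\fh = (\fh\cap\fk)\oplus\fb$; hence $Z\notin\fh$. The paper only asserts $Z-X\in\fk_1$, not $Z-X\in\fk_1\cap\fh$, and indeed $Z-X\notin\fh$ in general (compute it for $\mathrm{SU}(2,1)$). Consequently the decomposition $\fh=\fb\oplus(\fk_1\cap\fh)\oplus\R Z$, the identity $\alpha(Z)=0$ for $\alpha\in\Delta_0$, and the concluding chain $wZ=w'Z=\Ad(k)Z=Z$ are not meaningful; everything has to be phrased in terms of $X$, for which none of this is automatic. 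Your second step has an independent defect: $\fk_1\cap\fh$ is not a Cartan subalgebra of $\fk_1$ (already $\dim(\fh\cap\fk)=\rank\fk-\rank\bY$ is too small), and $\Delta_0$ is the root system of $\fm=\fz_{\fk}(\fb)$ with respect to $\fh\cap\fk$, not of $\fk_1$. The span of the coroots of $\Delta_0$ is $(\fh\cap[\fm,\fm])_\C$, which is a \emph{proper} subspace of $(\fk_1\cap\fh)_\C$ whenever $\dim\fz(\fm)>1$, e.g.\ for $\mathrm{SU}(2+4)/\mathrm{S}(\mathrm{U}(2)\times\mathrm{U}(4))$ where the coroots of $\Delta_0$ span a line inside the two-dimensional $(\fk_1\cap\fh)_\C$. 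So the claimed stabilization of $(\fk_1\cap\fh)_\C$ is not proved by that argument.

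The kernel of your third step is, however, a good idea and can be repaired into a proof genuinely different from the paper's. Construct $k\in N_K(\fb)\cap N_K(\fh)$ with $\Ad(k)|_{\fb}=w|_{\fb}$ exactly as you do. Since $k\in K$ and $K$ is connected, $\Ad(k)Z=Z$; since $\Ad(k)$ preserves $\fk_1$, $\fh$ and $\fb$, it preserves $\fh\cap\fk_1$; and from $Z-\Ad(k)X=\Ad(k)(Z-X)\in\fk_1$ together with $\Ad(k)X\in\fh\cap\fk$ one gets $\Ad(k)X\equiv X$ modulo $\fh\cap\fk_1$, which is exactly what $w'\mu_0=\mu_0$ requires (you do not get, and do not need, $w'X=X$ on the nose). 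The discrepancy $w^{-1}w'\in W_0$ is handled the same way, because each $r_\alpha$ with $\alpha\in\Delta_0$ is realized by an element of $Z_K(\fb)_o$ (the root vectors of $\Delta_0$ lie in $\fm_\C\subset\fk_\C$). The paper argues quite differently: it uses that $\widetilde W|_{\fa}=W$ consists of signed permutations of the $\eps_j$, lifts each transposition to $r_\beta r_{\overline\beta}\in\widetilde W$ for the orthogonal pair $\beta=\beta^++\beta^-$, $\overline\beta=-\beta^++\beta^-$, and checks $wX=X$ generator by generator. As written, though, your proposal is not a correct proof.
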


\begin{proof}
Since $\mu_0 = \lambda\, (i\, X)$ (see Section \ref{s:sphRep}), it remains to show that 
if $w \in \widetilde{W}$ then $w\, X = X$. Recall \cite[(4.4)]{sc} for the definition of $X$. Note that 
$\widetilde{W} |_\mathfrak{a} = W$, and elements of $W$ are permutations and sign changes. The permutations of $\eps_j$'s
are given by products of reflections $r_{\frac{1}{2}\, (\eps_i - \eps_j)}$'s. Let $\beta \in \Delta$ be given by 
$\beta = \beta^+ + \beta^-$ where $\beta^- = \frac{1}{2}\, (\eps_i - \eps_j) \in \fa^\ast$ and $\beta^+ \in (\fa^\perp)^\ast$.
Let $\overline{\beta} = -\beta^+ + \beta^-$. Then $\langle \beta, \overline{\beta} \rangle = 0$, 
$r_\beta\, r_{\overline{\beta}} \in \widetilde{W}$, and 
\[r_\beta\, r_{\overline{\beta}} |_\mathfrak{a} = r_{\frac{1}{2}\, (\eps_i - \eps_j)}.\]
We have $(r_\beta\, r_{\overline{\beta}})\, X = X$. 
It is easy to see that any $w \in \widetilde{W}$ with $w |_\fa = r_{\frac{1}{2}\, (\eps_i - \eps_j)}$
also satisfies $w\, X = X$.
On the other hand, the sign changes of $\eps_j$'s are given by products of $r_{\eps_j}$'s. 
But then $r_{\eps_j}\, X = X$ since $X \perp \fa$. This is correct for any element in $\widetilde{W}$ whose restriction
on $\mathfrak{a}$ is a sign change. Therefore, $w\, X = X$ for any $w \in \widetilde{W}$.
\end{proof}

\begin{lemma}
\label{lem:inv}
Let $G \in \mathrm{PW}_r\, (\mathfrak{h}_\mathbb{C}^\ast)^{W_\fh}$. Then 
$\Phi (G)|_{\mathfrak{a}_{\mathbb{C}}^*} = G\, (\, \cdot\, + \mu_0 + \rho_0)$ is $W$-invariant, i.e. 
if $w \in W$ and $\mu \in \mathfrak{a}_\mathbb{C}^\ast$, then
\[G\, (w\, \mu + \mu_0 + \rho_0) = G\, (\mu + \mu_0 + \rho_0).\]
\end{lemma}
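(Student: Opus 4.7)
The plan is to reduce the desired $W$-invariance on $\fa_\C^*$ to the given $W_\fh$-invariance of $G$ on $\fh_\C^*$ by lifting each $w\in W$ to a carefully chosen representative $\tilde w\in\widetilde W$. By standard structure theory one has $\widetilde W/W_0\cong W$, so every $w\in W$ admits some lift $\tilde w\in\widetilde W\subseteq W_\fh$ with $\tilde w|_{\fa}=w$. For $\mu\in\fa_\C^*$ the element $\mu+\mu_0+\rho_0$ lies in $\fh_\C^*$, so the $W_\fh$-invariance of $G$ gives
\[
G(\mu+\mu_0+\rho_0)=G\bigl(\tilde w\mu+\tilde w\mu_0+\tilde w\rho_0\bigr),
\]
and the proof reduces to identifying the right-hand side with $G(w\mu+\mu_0+\rho_0)$.

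Two of the three summands are handled immediately. Since $\mu\in\fa_\C^*$ and $\tilde w|_{\fa}=w$, we have $\tilde w\mu=w\mu$. Lemma \ref{lem:mu0} gives $\tilde w\mu_0=\mu_0$. The only nontrivial point is therefore
\[
\tilde w\rho_0=\rho_0,
\]
which in general fails for an arbitrary lift.

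I would resolve this by exploiting the freedom of choice of $\tilde w$ inside its coset modulo $W_0$. First, any $\tilde w\in\widetilde W$ permutes the subsystem $\Delta_0=\{\alpha\in\Delta:\alpha|_{\fa}=0\}$: for $\alpha\in\Delta_0$ and $H\in\fa$, $(\tilde w\alpha)(H)=\alpha(\tilde w^{-1}H)=0$ because $\tilde w^{-1}\fa=\fa$. Hence $\tilde w\Delta_0^+$ is again a positive system of $\Delta_0$. Next, $W_0$ coincides with the Weyl group $W(\Delta_0)$ of the subsystem $\Delta_0$, and therefore acts simply transitively on its positive systems. Consequently the coset $\tilde wW_0$ contains a unique element sending $\Delta_0^+$ to $\Delta_0^+$; replacing $\tilde w$ by this representative yields $\tilde w\rho_0=\rho_0$, and combining the three identities gives the claim.

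The main obstacle is precisely the equation $\tilde w\rho_0=\rho_0$; the rest is bookkeeping. The argument hinges on two structural facts that need to be invoked explicitly, namely the identification $W_0=W(\Delta_0)$ and the simple transitivity of a Weyl group on the positive systems of its root system. Without the freedom to modify $\tilde w$ inside $W_0$, one would obtain only a weaker statement involving a $\rho_0$-shift that need not vanish.
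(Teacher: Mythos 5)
Your proof is correct and follows essentially the same route as the paper: lift $w$ to an element of $\widetilde{W}$, use Lemma \ref{lem:mu0} to handle $\mu_0$, and correct by an element of $W_0$ (acting as the Weyl group of $\Delta_0$, simply transitively on its positive systems) so that $\rho_0$ is also fixed. The paper applies the correcting element $w_0\in W_0$ after restricting, rather than absorbing it into the choice of lift as you do, but this is the same argument.
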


\begin{proof}	
Let $w \in W$. Let $\widetilde{w} \in \widetilde{W}$ be such that $\widetilde{w}\, |_\fa = w$.
Since $G$ is $W_\fh$-invariant, it is thus $\widetilde{W}$-invariant. In view of Lemma \ref{lem:mu0} we get
\[G\, (w\, \mu + \mu_0 + \rho_0) = G\, (\mu + \widetilde{w}^{-1}\, \mu_0 + \widetilde{w}^{-1}\, \rho_0) 
= G\, (\mu + \mu_0 + \widetilde{w}^{-1}\, \rho_0).\]
Note that $\widetilde{w}^{-1}\, \Delta_0^+ = \Delta_0^+$, and we can choose $w_0 \in W_0$ such that 
$w_0\, (\widetilde{w}^{-1}\, \Delta_0^+) = \Delta_0^+$, in particular, choose $w_0$ s.t. 
$w_0\, \widetilde{w}^{-1}\, (\rho_0) = \rho_0$. 
Moreover, $w_0\, \mu = \mu$. It follows that 
\[G\, (w\, \mu + \mu_0 + \rho_0) = G\, (w_0\, (\mu + \mu_0 + \widetilde{w}^{-1}\, \rho_0)) 
= G\, (\mu + \mu_0 + \rho_0).\]
\end{proof}
 
Let $G \in \PW_r(\fh_\C^\ast)$. Let $k = |W_\fh|$. Let $P_1,\ldots ,P_k$ be a basis for $S(\fh)$ over
$S(\fh)^{W_\fh}$. Here, $S\, (\fh)$ is the symmetric algebra of $\C$-valued polynomials on $\fh_\C^\ast$, and 
$S(\fh)^{W_\fh}$ consists of $W_\fh$-invariant elements in $S\, (\fh)$.
According to Rais \cite{R83}, published proof due
to L. Clozel and P. Delorme, \cite{CD90},  there exists
$G_1,\ldots ,G_k\in \PW_r(\fh_\C^\ast)^{W_\fh}$ 
such that
\[G=P_1G_1+\ldots + P_kG_k\, .\] 
We are now ready to prove that the $\chi_l$-spherical Fourier transform $\cS_l$ (\ref{eq22}) is surjective.

\begin{proof}(Part $(2)$ of Theorem \ref{thm1})
Let $F\in \PW_r(\fb_\C^\ast)^{R(W)}$. Then $\Psi\, (F) \in \text{PW}_r\, (\mathfrak{b}_{\mathbb{C}}^\ast)^W$.
It follows from Cowling \cite{C86} that there exists $E\in \mathrm{PW}_r(\mathfrak{h}_{\mathbb{C}}^\ast)$ 
such that $\Phi (E)|_{\mathfrak{a}_{\mathbb{C}}^*}=\Psi(F)\, (\, \cdot\, + \mu_0)$, i.e. 
\[E\, (\mu + \mu_0 + \rho_0) = \Phi (E)|_{\mathfrak{a}_{\mathbb{C}}^*}\, (\mu) = \Psi\, (F)\, (\mu + \mu_0),\; \mu \in \mathfrak{a}_{\mathbb{C}}^\ast.\] 
By the above results of Rais, there exist polynomials $P_j\in S(\fh)$
and $G_j\in \PW_r(\fh_\C^\ast)^{W_\fh}$ such that $E = \sum_{j=1}^k\, P_j\, G_j$. Hence
\begin{equation}\label{eq:Psi}
\Psi (F)\, (\, \cdot \, + \mu_0) =\sum_{j=1}^k  \Phi (P_j)|_{\fa_\C^*}\; \Phi (G_j)|_{\fa_\C^*}\, .
\end{equation}
Taking the average of (\ref{eq:Psi}) over $W$ gives that
\begin{eqnarray*}
\Psi\, (F)\, (\mu + \mu_0) & = & \sum_{j=1}^k\, \left(\frac{1}{|W|}\, \sum_{w \in W}\, \Phi (P_j)|_{\mathfrak{a}_{\mathbb{C}}^*}\, 
(w\, \mu)\, \Phi (G_j)|_{\mathfrak{a}_{\mathbb{C}}^*}\, (w\, \mu)\right)\\
& = & \sum_{j=1}^k\, \underbrace{\left(\frac{1}{|W|}\, \sum_{w \in W}\, \Phi (P_j)|_{\mathfrak{a}_{\mathbb{C}}^*}\, 
(w\, \mu)\right)}_{=:\; q_j\, (\mu)}\, \Phi (G_j)|_{\mathfrak{a}_{\mathbb{C}}^*}\, (\mu)
\end{eqnarray*}
where by Lemma \ref{lem:inv}
\[\Phi (G_j)|_{\mathfrak{a}_{\mathbb{C}}^*}\, (w\, \mu) = G_j\, (w\, \mu + \mu_0 + \rho_0) = G_j\, (\mu + \mu_0 + \rho_0)
= \Phi (G_j)|_{\mathfrak{a}_{\mathbb{C}}^*}\, (\mu),\; w \in W.\]
Note that $q_j \in S\, (\fa)^W$. Let $D_j\in \cD_l(\bY)$ be such that $q_j(\lambda )=\overline{\zeta_l(D^*_j,\lambda)}$,
$\lambda \in \fa_\C^\ast$, see the discussion at the beginning of Section \ref{s:holExt}. 

By the Paley-Wiener theorem for $C_r^\infty (U)^U$, there exists $\varphi_j\in C_r^\infty (U)^U$
with spherical Fourier transform $\Psi_{\fh}^{-1}G_j$. Then $f_j=Q_l(\varphi_j)\in C_r^\infty (U// K ,\cL_l)$
has the $\chi_l$-spherical Fourier transform:
\[\mathcal{S}_l\, (f_j)\, (\mu + \mu_0) = \Phi (G_j)|_{\mathfrak{a}_{\mathbb{C}}^*}\, (\mu + \rho).\]
It follows that $F$ is the $\chi_l$-spherical Fourier transform of $f:=D_1f_1+\ldots + D_k\, f_k$. 
The surjectivity now follows from the
fact that differentiation does not increase supports and hence $f\in C_r^\infty (U// K; \cL_l)$.
\end{proof}

\appendix

\section{Estimates for the Heckman-Opdam Hypergeometric Functions}
\label{hyper}

\noindent
We first review some facts on the theory of Heckman-Opdam hypergeometric functions. 
We refer to \cite{hs} for notations and basic definitions. 
We do not assume that $\Sigma$ corresponds to a Hermitian symmetric space. 
Recall that a multiplicity function $m:\Sigma \to \C$ is a $W$-invariant function. It is said
to be positive if $m(\alpha )\ge 0$ for all $\alpha$. The set of multiplicity functions is denoted by $\cM$ and the subset
of positive multiplicity functions is denoted by $\cM^+$. The Harish-Chandra series corresponding to
a multiplicity function $m$ is denoted by $\Phi (\lambda, m; a)$ and
the $c$-function is denoted by $c(\lambda, m)$.

\begin{definition}
\label{def:hg}
The function
$$F (\lambda, m; a) = \sum_{w \in W} c (w \lambda, m) \Phi (w \lambda, m; a)$$
is called the \textit{hypergeometric function} on $A$ associated with the triple $(\g{a},\, \Sigma,\,  m)$. 
\end{definition}

\begin{theorem} Let
\[\cM_{\ge} =\{m\in\cM\mid (\forall \alpha\in\Sigma_\ast)\, \, m_\alpha + m_{\alpha/2} \ge 0,\, m_\alpha \ge 0\}\, .\]
Then the following hold:
\begin{itemize}
\item[(1)] There exists an open set $\cM_{\text{reg}}$ containing $\cM_{\ge}$ and an open set
$\mathcal{V} \subset A_\C$ containing $A$ such that $F(\lambda , m; a)$ is holomorphic on
$\fa^*_\C\times \cM_{\text{reg}}\times \mathcal{V}$, and satisfies 
\begin{eqnarray*}
F\, (w\, \lambda, m;\, a) = F\, (\lambda, m;\, a),\; \forall\, w \in W\\
F\, (\lambda, m;\, w\, a) = F\, (\lambda, m;\, a),\; \forall\, w \in W
\end{eqnarray*}
with $(\lambda, m ; a) \in \fa^*_\C\times \cM_{\text{reg}}\times \mathcal{V}$.
\item[(2)] One can take $\mathcal{V}$ in (1) to be $\exp\, (\fa+\Omega)$, where $\Omega$ is defined as in (\ref{eq:Omega}).
\end{itemize}
\end{theorem}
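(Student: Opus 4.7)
My plan is to construct $F(\lambda,m;a)$ as a Weyl-symmetrized sum of Harish-Chandra series and then establish its holomorphic extension. First I would define $\cM_{\text{reg}}$ as the open subset of $\cM$ on which the recursion coefficients $\Gamma_\mu(\lambda,m)$ for the Harish-Chandra series $\Phi(\lambda,m;a)$ are regular in $m$ for every $\mu$ in the relevant monoid, and on which the $c$-function $c(\cdot,m)$ is holomorphic. The condition $m\in\cM_{\ge}$ (namely $m_\alpha+m_{\alpha/2}\ge 0$ and $m_\alpha\ge 0$) is the classical Heckman-Opdam condition guaranteeing convergence of $\Phi$ and regularity of $c$; openness of $\cM_{\text{reg}}$ is a perturbation argument, since the obstructions (poles of $c$, singularities in the recursion) are algebraic conditions on $m$.

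For part (1), the function $F(\lambda,m;a)=\sum_{w\in W}c(w\lambda,m)\Phi(w\lambda,m;a)$ is a priori only defined on the open Weyl chamber $A^+$, where each Harish-Chandra series converges. The standard cancellation mechanism of Heckman-Opdam shows that the individual singularities of the summands along the walls $\alpha(H)=0$ cancel after $W$-symmetrization, producing a joint-holomorphic function on $\fa^*_\C\times\cM_{\text{reg}}\times \mathcal{V}_0$ for some tubular complex neighborhood $\mathcal{V}_0$ of $A$. Weyl-invariance in $\lambda$ is built in by the symmetrization, and $W$-invariance in $a$ follows from the uniqueness of the $W$-symmetric solution of the hypergeometric system normalized to take value $1$ at the identity.

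For part (2), the task is to enlarge $\mathcal{V}_0$ to $\exp(\fa+\Omega)$. The singular hypersurfaces of the Heckman-Opdam hypergeometric system are $\{a\in A_\C:a^\alpha=1\}$ for $\alpha\in\Sigma$. Writing $a=\exp(Y+iX)$ with $Y\in\fa$ and $X\in\fb$, the equation $a^\alpha=1$ reduces to $\alpha(Y)=0$ and $\alpha(X)\in 2\pi\Z$; for $X\in\Omega$ the bound $|\alpha(X)|<\pi$ forces $\alpha(X)=0$. Hence the only singular locus encountered in $\exp(\fa+\Omega)$ consists of the complex walls $\{\alpha(\log a)=0\}$, which are exactly the places where the cancellation mechanism of step (1) applies. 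The plan is therefore to propagate the local extension of $F$ from $\mathcal{V}_0$ to $\exp(\fa+\Omega)$ along paths in the complement of the complex walls, and then to remove the walls as apparent singularities using the Weyl-invariant structure, together with uniform convergence estimates on the Harish-Chandra series on compacts in the larger tube.

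The main obstacle will be obtaining uniform convergence of the Harish-Chandra series on compacts in the enlarged tube $\exp(\fa+\Omega)$, precisely because this doubles the classical domain where Opdam's estimates apply (the domain $|\alpha(X)|<\pi/2$). The difficulty is compounded by the fact that we only assume $m\in\cM_{\ge}$, which allows some multiplicities to be negative. I would handle this by combining the explicit power-series recursion for $\Gamma_\mu$ with a careful decomposition of $\cM_{\ge}$ (isolating the sign-changes allowed), and then importing Opdam's scheme of bounding $|\Gamma_\mu|$ by the analogous coefficients at a positive reference multiplicity, adapted here by the Weyl-averaging to take advantage of cancellations that only appear in the symmetrized sum $F$ rather than in individual series $\Phi$.
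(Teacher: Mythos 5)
The paper does not actually prove this theorem: part (1) is quoted from \cite[Theorem 4.4.2 and Remark 4.4.3]{hs} (see also \cite[Theorem 3.15]{op}) and part (2) from \cite[Remark 3.17]{bop}. Your outline for part (1) is essentially the argument behind those citations (symmetrization of the Harish--Chandra series, cancellation of the apparent singularities along the walls, openness of the regularity conditions in $m$), so there is nothing to object to there beyond its being a sketch.

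For part (2) there is a genuine gap. You correctly isolate the key geometric fact: writing $\log a = Y+X$ with $Y\in\fa$, $X\in\fb$, the singular hypersurfaces of the hypergeometric system met by $\exp(\fa+\Omega)$ are only the complex walls $\{\alpha(\log a)=0\}$, because $|\alpha(X)|<\pi$ excludes the other branches. (Check your normalization here: with the radial operator written in terms of $e^{-2\alpha}$ as in this paper, the singularities sit at $\alpha(\log a)\in\pi i\Z$, not $2\pi i\Z$; the bound $\pi$ in (\ref{eq:Omega}) is exactly calibrated to this, and with your convention $\Omega$ would not be the natural domain.) But the step you actually need --- why $F$ extends holomorphically \emph{across} the complex walls inside the enlarged tube, and not merely near the real points of $A$ where the classical cancellation argument lives --- is asserted rather than proved: ``remove the walls as apparent singularities using the Weyl-invariant structure'' requires either a local boundedness estimate near the walls (so that the Riemann removable singularity theorem applies) or a monodromy/regular-singularities argument, and the Weyl cancellation established near $A$ does not automatically propagate to wall points with large imaginary part. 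Moreover, your closing paragraph conflates this domain-of-holomorphy statement with the growth estimate of Proposition \ref{pro1}/\ref{hypfun:estim}: convergence of the Harish--Chandra series is not the obstruction here (the series converges on the whole tube over the positive chamber, uniformly on compacta, by the standard Gangolli-type bounds on the recursion coefficients, irrespective of the imaginary part); the doubling of the domain from $\Omega/2$ to $\Omega$ and the relaxation to $m\in\cM_{\ge}$ are issues for the differential-inequality estimate proved later in the appendix, not for the present theorem.
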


\begin{proof} For (1) see \cite[Theorem 4.4.2 and Remark 4.4.3]{hs}. See also \cite[Theorem 3.15]{op}. 
For (2) see Remark 3.17 in \cite{bop}.
\end{proof}

\begin{proposition} Let the notation be as above. Let $l \in \Z$ and let $\eta_l^\pm$ be as in (\ref{eq:eta}).
Then the following hold:
\begin{itemize}
\item[(1)] The multiplicity functions $m_{\pm} (l)$ are in $\cM_{\ge}$.
\item[(2)] For $\lambda \in \g{a}_{\C}^\ast$,
\[\displaystyle \varphi_{\lambda,\, l}\, |_A = \eta_l^\pm\, F\, (\lambda,\, m_\pm(l);\; \cdot)\]
where the $\pm$ sign indicates that both possibilities are valid. 
\end{itemize}
\end{proposition}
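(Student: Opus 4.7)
\emph{Part (1).} I will verify the two defining inequalities of $\cM_{\ge}$ by checking them separately for each $W$-orbit in $\Sigma_\ast$. Since $\Sigma$ is of type $BC_n$, the unmultiplicable positive roots split as $\Sigma_\ast^+ = \{\eps_j\pm\eps_i:i<j\}\cup\{2\eps_j\}$. For $\alpha=\eps_j\pm\eps_i$ one has $\alpha/2\notin\Sigma$, so $m_\pm(l)_{\alpha/2}=0$, and both required inequalities reduce to $m_m\ge 0$. For $\alpha=2\eps_j$ one has $\alpha/2=\eps_j$, and the shifts defined in (\ref{mplus})--(\ref{mminus}) give
\[m_\pm(l)_\alpha + m_\pm(l)_{\alpha/2} \;=\; (m_l\pm 2|l|)+(m_s\mp 2|l|) \;=\; m_l+m_s\;\ge\;0,\]
which is non-negative by the entries of Table \ref{t2}. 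The individual positivity $m_\pm(l)_\alpha\ge0$ at $\alpha=2\eps_j$ can then be read off case by case from the same table.

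\emph{Part (2).} My plan is to show that both sides coincide as $W$-invariant joint eigenfunctions of a commuting algebra of differential operators normalized to $1$ at the identity. The decisive step is the intertwining identity
\[(\eta_l^\pm)^{-1}\circ R_l(D)\circ \eta_l^\pm \;=\; D(\lambda,m_\pm(l)),\]
where $R_l(D)$ denotes the radial part on $\chi_l$-bicovariant functions of an operator $D\in\cD_l(\bX)$, and $D(\lambda,m_\pm(l))$ is the corresponding hypergeometric differential operator associated with the shifted multiplicity $m_\pm(l)$. For the Laplace--Beltrami operator this follows by comparing the explicit form of $L(l)$ already displayed in the text with the Heckman--Opdam Laplacian at multiplicity $m_\pm(l)$; for the full algebra, conjugation by $\eta_l^\pm$ redistributes $2|l|$ units of weight between the short and long roots at the level of the Harish-Chandra image, and the precise statement is contained in \cite{hs,sh}. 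Granting this, the function $(\eta_l^\pm)^{-1}\varphi_{\lambda,l}|_A$ is a $W$-invariant joint eigenfunction of the hypergeometric system at $m_\pm(l)$ with spectral parameter $\lambda$, and it equals $1$ at $e$ because $\varphi_{\lambda,l}(e)=\eta_l^\pm(e)=1$. The standard uniqueness characterization of $F(\lambda,m_\pm(l);\cdot)$ (cf.\ Definition \ref{def:hg}, together with the uniqueness assertion of the lemma containing (\ref{eq:sDiff})) then forces
\[(\eta_l^\pm)^{-1}\varphi_{\lambda,l}|_A \;=\; F(\lambda,m_\pm(l);\,\cdot\,),\]
which is the required identity.

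\emph{Main obstacle.} The essential work is the intertwining identity for the whole algebra $\cD_l(\bX)$, not merely for the Laplacian. One must carefully track how conjugation by $\eta_l^\pm$ modifies the first-order coefficients of each radial operator and check that the resulting coefficients are exactly those of the hypergeometric operators at $m_\pm(l)$; the specific redistribution $m_s\mapsto m_s\mp 2|l|$, $m_l\mapsto m_l\pm 2|l|$ in (\ref{mplus})--(\ref{mminus}) is dictated precisely by this matching. The two signs in the identity are linked by the reciprocal relation $\eta_l^-=(\eta_l^+)^{-1}$, so once one version is proved the other follows immediately, equivalently via the shift relation $F(\lambda,m_-(l);\cdot)=(\eta_l^+)^2 F(\lambda,m_+(l);\cdot)$ between the two hypergeometric functions.
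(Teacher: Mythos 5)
The paper's own proof is a one-liner: part (1) is said to follow from the definitions (\ref{mplus})--(\ref{mminus}), and part (2) is a citation of Heckman--Schlichtkrull. Your proof of (2) essentially re-derives that cited theorem by the standard route (conjugation of the radial system by $\eta_l^{\pm}$ plus uniqueness of the normalized $W$-invariant joint eigenfunction), while deferring the full intertwining identity back to \cite{hs,sh}; that is consistent with the paper's level of detail and is fine as a sketch, though note that the uniqueness assertion you invoke from the lemma containing (\ref{eq:sDiff}) concerns $\varphi_{\lambda,l}$ on $G$, not the hypergeometric system, so the uniqueness you actually need is the Heckman--Opdam one for $F(\lambda,m;\cdot)$.

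There is, however, a concrete problem in your part (1), at the minus sign. You assert that the individual positivity $m_{-}(l)_{2\eps_j}\ge 0$ ``can be read off case by case'' from Table (\ref{t2}). It cannot: every row of that table has $m_l=1$, so $m_{-}(l)_{2\eps_j}=m_l-2|l|=1-2|l|<0$ whenever $l\neq 0$, and $2\eps_j$ does lie in $\Sigma_\ast$ (since $4\eps_j\notin\Sigma$). Hence $m_{-}(l)$ violates the clause $m_\alpha\ge 0$ for all $\alpha\in\Sigma_\ast$ in the stated definition of $\cM_{\ge}$. What your computation actually establishes is: both $m_{\pm}(l)$ satisfy the sum condition $m_\alpha+m_{\alpha/2}=m_l+m_s\ge 0$, but only $m_{+}(l)$ satisfies the individual positivity on $\Sigma_\ast$. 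This discrepancy is really in the proposition as stated rather than in your strategy (and it is harmless for the rest of the paper, which only ever feeds $m(l)=m_{+}(l)$ into Remark \ref{eta:estim} and Proposition \ref{hypfun:estim}), but a proof must not claim a verification that fails; you should either restrict (1) to $m_{+}(l)$ or flag that $m_{-}(l)\in\cM_{\ge}$ requires weakening the definition of $\cM_{\ge}$ to the sum condition alone.
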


\begin{proof} (1) follows from the definition of $m_\pm (l)$ (cf. (\ref{mplus}) and (\ref{mminus})), and 
(2) is \cite[p.76, Theorem $5.2.2$]{hs}.
\end{proof}

\begin{remark}
\label{eta:estim}
{\rm For $X \in \Omega$ and $\lambda \in \g{a}_{\C}^\ast$ we have 
\[\varphi_{\lambda,\, l} (\exp X) = \eta_l  (\exp X) F (\lambda, m (l); \exp X).\] 
Since $\alpha (\g{b}) \subset i  \R$ for $\alpha \in \Sigma$,
$$0 < |\eta_l (\exp X)| = \prod_{\alpha \in \mathcal{O}_s^+} \left|\cos\, \im\, \alpha\, (X)\right|^{2|l|} \leq 1.$$
Thus $\eta_l$ is holomorphic on $A (\exp \Omega)$, $W$-invariant on $A (\exp \Omega)$, and bounded on $\exp \Omega$.}
\end{remark}
 
In the main part showing that the $\chi_l$-spherical Fourier transform maps the space
$C_r^\infty (U//K;\cL_l)$ into the Paley-Wiener space, a good control over
the growth of the hypergeometric functions was needed. Proposition $6.1$ in \cite{op} gives a
uniform estimate both in $\lambda \in \g{a}_{\C}^\ast$ and in $Z \in \g{a} + \overline{\Omega/2}$
(recall the difference in the notation) in case all multiplicities are positive. 
But we need similar estimates where some multiplicities are allowed
to be negative. In the following
we will generalize Opdam's results to multiplicities in $\cM_{\ge}$. We
also point out, that Opdam's estimates holds for $Y\in  \overline{\frac{1}{2}\Omega}$,
but our estimates, with possibly a different constant in the exponential
growth, holds for $Y\in \Omega$. 
Our proof is based on ideas from \cite{op} but uses a
different regrouping of terms as we will point out later.

For $m \in \mathcal{M}_{\text{reg}}$ let
$$\wt{\rho} = \wt{\rho} (m) = \frac{1}{2} \sum_{\alpha \in \Sigma^+} |m_\alpha| \alpha.$$
 
\begin{proposition}
\label{pro1}
Let $m \in \mathcal{M}_{\ge}$. Let $F$ be the hypergeometric function associated with $\Sigma$ and $m$.
Let $\eps > 0$. Then there is a constant $C = C_\eps > 0$
depending on $\eps$ such that
$$|F\, (\lambda,m; \exp\, Z)| \leq |W|^{\frac{1}{2}}\, \exp\, (- \min_{w \in W} \im\, (w \lambda\, (Y)) + \frac{C}{2}\, \max_{w \in
W}\, w\, \wt{\rho}\, (Y) + \max_{w \in W}\, \re (w \lambda\, (X)))$$
where $Z = X + i\, Y$ with $X,\, Y \in \g{a}$ and $|\alpha\, (Y)| \leq \pi - \eps$ for all $\alpha \in \Sigma$.
\end{proposition}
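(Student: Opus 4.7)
The plan is to follow the strategy of Opdam \cite[Prop.~6.1]{op} with two crucial modifications: (i) to replace $\rho(m)=\tfrac{1}{2}\sum_\alpha m_\alpha\alpha$ by the unsigned $\wt{\rho}(m)=\tfrac{1}{2}\sum_\alpha|m_\alpha|\alpha$ wherever positivity of $m$ was used, and (ii) to enlarge the tube domain from $|\alpha(Y)|\le\pi/2$ to $|\alpha(Y)|\le\pi-\eps$ via a regrouping of terms in the Weyl sum. The two starting ingredients are the Harish-Chandra expansion
$$\Phi(\lambda,m;\exp Z)=e^{(\lambda-\rho(m))(Z)}\sum_{\kappa}\Gamma_\kappa(\lambda,m)\,e^{-\kappa(Z)},$$
where $\kappa$ ranges over non-negative integer combinations of positive roots, together with $F(\lambda,m;a)=\sum_{w\in W}c(w\lambda,m)\,\Phi(w\lambda,m;a)$.

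The first step is to bound the coefficients $\Gamma_\kappa(\lambda,m)$. These satisfy a recursion that is linear in $m$, so by induction on the height of $\kappa$ and the triangle inequality one obtains a majorization $|\Gamma_\kappa(\lambda,m)|\le\Gamma_\kappa(\lambda,|m|)$, where $|m|$ has entries $|m_\alpha|$ and lies in $\cM^+$. This step forces $\wt{\rho}(m)$ to appear in place of $\rho(m)$ and accounts for the contribution $\tfrac{C}{2}\max_w w\wt{\rho}(Y)$ in the final bound.

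The second step addresses the enlarged domain. A term-by-term absolute value estimate of the series converges only for $Y\in\tfrac{1}{2}\Omega$, which is the Opdam range. To reach $\Omega_\eps$ one must not estimate each summand individually but instead regroup the Weyl sum into $W$-symmetric blocks, exploiting the symmetries from Theorem~\ref{th:symm} together with the hypothesis $m\in\cM_{\ge}$: the condition $m_\alpha+m_{\alpha/2}\ge 0$ for $\alpha\in\Sigma_\ast$ ensures that apparent singularities in the individual $c$-factors cancel across the paired terms, yielding a uniform bound throughout $\Omega_\eps$ with a constant $C=C_\eps$ depending on the distance to the boundary.

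The remaining step is to assemble the exponential bound: the leading term $e^{(w\lambda-\rho(m))(Z)}$ in each $w$-summand, maximized over $w$, produces the factor $\exp(-\min_w\im(w\lambda(Y))+\max_w\re(w\lambda(X)))$, while the $|W|^{1/2}$ prefactor arises from a Cauchy–Schwarz estimate applied to the regrouped Weyl sum viewed in $\ell^2(W)$. The main obstacle — and the genuine novelty over Opdam's argument — is the regrouping in the second step: controlling the Weyl sum uniformly on the doubled strip $\Omega_\eps$, with a clean bound in $\wt{\rho}$, is exactly where the class $\cM_{\ge}$ (strictly larger than $\cM^+$, but carefully avoiding the true singular locus of $c$) proves to be the natural hypothesis, since it is what matches the cancellation pattern of the pairing.
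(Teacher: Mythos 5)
Your proposal takes a genuinely different route from the paper --- a term-by-term analysis of the Harish--Chandra expansion --- and as described it has a fatal gap. The series $\Phi(\lambda,m;\exp Z)=e^{(\lambda-\rho(m))(Z)}\sum_\kappa\Gamma_\kappa(\lambda,m)e^{-\kappa(Z)}$ converges only where $\re\,\alpha(Z)=\alpha(X)>0$ for all $\alpha\in\Sigma^+$, i.e.\ deep inside a chamber; its convergence has nothing to do with the size of $\alpha(Y)$, so your diagnosis that term-by-term estimation ``converges only for $Y\in\tfrac{1}{2}\Omega$'' misidentifies where the $\pi/2$ barrier comes from. Worse, the case the proposition is actually needed for is $X=0$, $\exp(iY)\in B$ (this is what feeds into the compact Paley--Wiener theorem), and there $|e^{-\kappa(iY)}|=1$ for every $\kappa$, so the series does not converge at all. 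In addition, the individual summands $c(w\lambda,m)\Phi(w\lambda,m;a)$ and the coefficients $\Gamma_\kappa(\lambda,m)$ have poles in $\lambda$ along affine hyperplanes where $F$ itself is regular (the recursion for $\Gamma_\kappa$ has small denominators in $\lambda$), so neither the majorization $|\Gamma_\kappa(\lambda,m)|\le\Gamma_\kappa(\lambda,|m|)$ nor the leading-term bound can be made uniform in $\lambda\in\fa_\C^*$ without making the asserted ``cancellation across paired terms'' completely explicit --- and no mechanism for that is given. Uniformity in $\lambda$ is essential for the Paley--Wiener application.

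The paper's proof avoids all of this. It writes $F(\lambda,m;\exp Z)=|W|^{-1}\sum_w G(\lambda,m,w^{-1}Z)$ in terms of Opdam's nonsymmetric function $G$, sets $\phi_w=G(\lambda,m,w^{-1}Z)$, and uses the first-order Cherednik-type system satisfied by the $\phi_w$ to derive differential inequalities for $\sum_w|\phi_w|^2$: first $e^{-2\mu(X)}\sum_w|\phi_w|^2$ is shown to be non-increasing along a chamber direction (giving the factor $\max_w\re(w\lambda(X))$), then $H(iY)=e^{(2\mu-C\nu)(Y)}\sum_w|\phi_w(\exp iY)|^2$ is shown to be non-increasing (giving the other two factors and, via $H(0)=|W|$ and $|\phi_e|^2\le\sum_w|\phi_w|^2$, the prefactor $|W|^{1/2}$). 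The hypothesis $m\in\cM_{\ge}$ enters through explicit pointwise inequalities such as $m_\alpha\frac{1+e^{-2\alpha(X)}}{|1+e^{-\alpha(Z)}|^2}+\frac{1}{2}m_{\alpha/2}\ge 0$, and the passage from $|\alpha(Y)|\le\pi/2$ to $|\alpha(Y)|\le\pi-\eps$ is achieved not by any resummation but by grouping the possibly positive term, which is controlled by $|\tan(\alpha(Y)/2)|\le C_1(\eps)$, with the negative term $-(C-2)(\nu,\eta)\sum_w|\phi_w|^2$ and choosing $C\ge 2+2C_1(\eps)$. To salvage your outline you would need to replace the series expansion by a monotonicity argument of this kind.
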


\begin{proof}
Let $\phi_w (\exp Z) = G (\lambda, m,w^{-1} Z)$ where $G$ is the nonsymmetric hypergeometric function defined as in \cite[Theorem
$3.15$]{op}, so that 
\[F (\lambda,\, m;\, \exp Z) = |W|^{-1} \sum_{w \in W}\, G (\lambda, m, w^{-1}Z)\, .\] 
In the following we will often write $\phi_w$ instead of
$\phi_w(\exp Z)$. By Definition $3.1$
and Lemma $3.2$ in \cite{op} we have

\[\partial_\xi \phi_w = - \frac{1}{2} \sum_{\alpha \in \Sigma^+} m_\alpha \alpha (\xi) \left[\frac{1 + e^{-2 \alpha
(Z)}}{1 - e^{-2 \alpha (Z)}}\, (\phi_w - \phi_{r_\alpha w}) - \mathrm{sgn} (w^{-1} \alpha) \phi_{r_\alpha w}\right]
+(w \lambda,\xi) \phi_w\, .\]
Here, $\mathrm{sgn}\, (\alpha) = 1$ if $\alpha \in \Sigma^+$, and $\mathrm{sgn}\, (\alpha) = -1$ if $\alpha \in -\Sigma^+$.
We get by taking complex conjugates,
\[\partial_\xi \overline{\phi}_w = - \frac{1}{2}  \sum_{\alpha \in \Sigma^+} m_\alpha \alpha (\overline{\xi})\left[\frac{1
+ e^{-2 \alpha (\overline{Z})}}{1 - e^{-2 \alpha (\overline{Z})}} (\overline{\phi}_w - \overline{\phi}_{r_\alpha w}) -
\mathrm{sgn} (w^{-1} \alpha) \overline{\phi}_{r_\alpha w}\right] 
+ (w \overline{\lambda},\overline{\xi}) \overline{\phi}_w\, .\]
It follows that
\begin{eqnarray*}
& & \partial_\xi\, \sum_w\, |\phi_w|^2 \\
& = & \sum_w [(\partial_\xi \phi_w) \overline{\phi}_w + \phi_w (\partial_\xi \overline{\phi}_w)]\\
& = & - \frac{1}{2}\, \sum_{\alpha \in \Sigma^+, w} [m_\alpha\, \alpha (\xi)
 \left(\frac{1 + e^{-2 \alpha\,
(Z)}}{1 - e^{-2 \alpha\, (Z)}} (\phi_w - \phi_{r_\alpha w}) \overline{\phi}_w - \mathrm{sgn} (w^{-1}\, \alpha)
\phi_{r_\alpha\, w} \overline{\phi}_w\right)\\
& & + m_\alpha \alpha (\overline{\xi}) \left(\frac{1 + e^{-2 \alpha (\overline{Z})}}{1 - e^{-2 \alpha (\overline{Z})}}
(\overline{\phi}_w - \overline{\phi}_{r_\alpha w}) \phi_w - \mathrm{sgn} (w^{-1} \alpha) \overline{\phi}_{r_\alpha\,
w} \phi_w\right)]\\
& & + 2 \sum_w \re (w \lambda (\xi)) |\phi_w|^2.
\end{eqnarray*}
For fixed $\alpha$, we add the terms with index $w$ and $r_\alpha w$. Then
\begin{eqnarray*}
& & \partial_\xi \sum_w |\phi_w|^2 \\
& = & - \frac{1}{4}  \sum_{\alpha \in \Sigma^+,  w} m_\alpha \left[\alpha (\xi) \frac{1 + e^{-2 \alpha (Z)}}{1 - e^{-2
\alpha (Z)}} + \alpha (\overline{\xi}) \frac{1 + e^{-2 \alpha (\overline{Z})}}{1 - e^{-2 \alpha
(\overline{Z})}}\right] |\phi_w - \phi_{r_\alpha w}|^2\\
& & + \sum_{\alpha \in \Sigma^+, w} m_\alpha \mathrm{sgn} (w^{-1} \alpha) \im (\alpha (\xi)) \im
(\overline{\phi}_w \phi_{r_\alpha w}) + 2 \sum_w \re (w \lambda (\xi)) |\phi_w|^2.
\end{eqnarray*}
Observe that
$$|1 - e^{- 2 \alpha (Z)}|^2 = (1 - e^{- 2 \alpha (Z)}) \overline{1 - e^{- 2 \alpha (Z)}} = (1 - e^{- 2 \alpha (Z)})
(1 - e^{- 2 \alpha (\overline{Z})}),$$
which gives
\begin{eqnarray*}
& & \alpha (\xi) \frac{1 + e^{-2 \alpha (Z)}}{1 - e^{-2 \alpha (Z)}} + \alpha (\overline{\xi}) \frac{1 + e^{-2
\alpha (\overline{Z})}}{1 - e^{-2 \alpha (\overline{Z})}}\\
& = & \frac{\alpha (\xi) (1 + e^{- 2 \alpha (Z)}) (1 - e^{- 2 \alpha (\overline{Z})}) + \alpha (\overline{\xi}) (1 +
e^{-2 \alpha (\overline{Z})}) (1 - e^{-2 \alpha (Z)})}{|1 - e^{- 2 \alpha (Z)}|^2}.
\end{eqnarray*}
Let $\eps > 0$ and write $Z = X + i Y$ with $X, Y \in \g{a}$ and $|\alpha (Y)| \leq \pi - \eps$, for all $\alpha \in \Sigma$.
Let $\alpha (X) = t \in \R$ and $\alpha (Y) = s \in \R$. Then $\alpha (Z) = t + i s$. We have
\[
(1 + e^{- 2 \alpha (Z)}) (1 - e^{- 2 \alpha (\overline{Z})}) =   1 - e^{-4 t} - 2 i e^{- 2 t} \sin (2 s)\, .\] 
Similarly,
$$(1 + e^{-2 \alpha (\overline{Z})}) (1 - e^{-2 \alpha (Z)}) = 1 - e^{-4 t} + 2 i e^{- 2 t} \sin (2 s)\, .$$
A simple calculation the shows that 
\begin{align*}
& \alpha (\xi) (1 + e^{- 2 \alpha (Z)}) (1 - e^{- 2 \alpha (\overline{Z})}) + \alpha (\overline{\xi}) 
(1 + e^{-2 \alpha (\overline{Z})}) (1 - e^{-2 \alpha (Z)})\\ 
=\; & 2 \re (\alpha (\xi)) (1 - e^{-4 \alpha (X)}) + 4 \im (\alpha (\xi)) e^{-2 \alpha (X)} \sin (2\alpha (Y)).
\end{align*}
Hence,
\begin{align}
& \partial_\xi \sum_w |\phi_w|^2 \label{eq37}\\
= & - \frac{1}{2} \sum_{\alpha \in \Sigma^+, w} m_\alpha \left[\frac{\re (\alpha (\xi)) (1 - e^{-4 \alpha (X)}) +
\frac{2 \im (\alpha (\xi))  \sin (2\alpha (Y))}{e^{2 \alpha (X)}}}{|1 - e^{- 2 \alpha (Z)}|^2}\right] |\phi_w -
\phi_{r_\alpha w}|^2 \nonumber\\
& + \sum_{\alpha \in \Sigma^+, w} m_\alpha \mathrm{sgn} (w^{-1} \alpha) \im (\alpha (\xi)) \im
(\overline{\phi}_w \phi_{r_\alpha w}) + 2 \sum_w \re (w \lambda (\xi)) |\phi_w|^2. \nonumber
\end{align}
We first take $X, \xi \in \g{a}^{\mathrm{reg}}$ such that they are in the same Weyl chamber. Let $\mu \in \{w \re \lambda\}_{w
\in W}$ be such that $\mu (\xi) = \max_w \re (w \lambda) (\xi)$. Then $(w \re \lambda - \mu) (\xi) \leq 0$. The
formula (\ref{eq37}) gives
\begin{eqnarray}
& & \partial_\xi (e^{- 2 \mu (X)}\, \sum_{w \in W}\, |\phi_w (\exp\, Z)|^2) \nonumber\\
& = & - \frac{1}{2} \sum_{\alpha \in \Sigma^+, w} m_\alpha \frac{\alpha (\xi) (1 - e^{- 4 \alpha (X)})}{|1 - e^{- 2
\alpha (Z)}|^2} |\phi_w - \phi_{r_\alpha w}|^2 e^{- 2 \mu (X)} \label{eq30}\\
& & + 2 \sum_{w \in W} (w \re \lambda - \mu) (\xi) |\phi_w|^2 e^{- 2 \mu (X)} \label{eq38},
\end{eqnarray}
Observe that the term (\ref{eq38}) is clearly nonpositive. In the term (\ref{eq30}), the factor 
\[|\phi_w - \phi_{r_\alpha w}|^2 e^{- 2 \mu (X)} \geq 0.\]
We let $m_{\alpha/2} = 0$ if $\alpha/2$ is not a root. Consider
\begin{eqnarray}
& & \sum_{\alpha \in \Sigma^+,\, w} m_\alpha \frac{\alpha (\xi) (1 - e^{- 4 \alpha (X)})}{|1 - e^{- 2 \alpha
(Z)}|^2}\, |\phi_w - \phi_{r_\alpha w}|^2 e^{- 2 \mu (X)} \nonumber\\
& = & \sum_{\alpha \in \Sigma_\ast^+,\, w} \left[m_\alpha \frac{\alpha (\xi) (1 - e^{-4 \alpha (X)})}{|1 - e^{- 2\alpha (Z)}|^2}
+ m_{\alpha/2} \frac{\frac{1}{2}\, \alpha (\xi) (1 - e^{-2\alpha (X)})}{|1 - e^{- \alpha (Z)}|^2}\right]\,
|\phi_w - \phi_{r_\alpha w}|^2 e^{- 2 \mu (X)} \nonumber\\
& = & \sum_{\alpha \in \Sigma_\ast^+,\, w} \alpha (\xi) \frac{1 - e^{- 2\alpha (X)}}{|1 - e^{- \alpha (Z)}|^2}
\left[m_\alpha \frac{1 + e^{- 2\alpha (X)}}{|1 + e^{- \alpha (Z)}|^2} + \frac{1}{2} m_{\alpha / 2}\right]\,
|\phi_w - \phi_{r_\alpha w}|^2 e^{- 2 \mu (X)}. \label{eq39}
\end{eqnarray}
Since $X, \xi$ are in the same Weyl chamber, $\alpha (\xi) (1 - e^{-2 \alpha (X)}) \geq 0$ for all $\alpha \in \Sigma^+$. 
Since $m_{\alpha / 2} \geq - m_\alpha$ and $m_\alpha \geq 0$ for all $\alpha \in \Sigma_\ast^+$, then
$$m_\alpha \frac{1 + e^{- 2\alpha (X)}}{|1 + e^{- \alpha (Z)}|^2} + \frac{1}{2} m_{\alpha / 2} \geq m_\alpha \frac{1 +
e^{- 2\alpha (X)}}{|1 + e^{- \alpha (Z)}|^2} - \frac{1}{2} m_\alpha = m_\alpha
\left[\frac{1 + e^{- 2\alpha (X)}}{|1 + e^{- \alpha (Z)}|^2} - \frac{1}{2}\right] \geq 0.$$
The reason is as follows:
$$\frac{1 + e^{- 2\alpha (X)}}{|1 + e^{- \alpha (Z)}|^2} - \frac{1}{2} = \frac{2 (1 + e^{- 2 t}) - |1 + e^{-t} e^{-i
s}|^2}{2 |1 + e^{-t} e^{-i s}|^2} \geq 0$$
if and only if the numerator is nonnegative, which is clearly as
\begin{eqnarray*}
2 (1 + e^{- 2t}) - |1 + e^{-t} e^{-i s}|^2 & = & 1 + e^{- 2t} - 2 e^{-t} \cos (s/2)\\
& \geq & 1 + e^{- 2 t} - 2 e^{-t} \\
& = & (1 - e^{- t})^2 \geq 0.
\end{eqnarray*}
It follows that (\ref{eq39}) is nonnegative. Thus the term (\ref{eq30}) is nonpositive and hence
$$\partial_\xi (e^{- 2 \mu (X)} \sum_{w \in W} |\phi_w (\exp\, Z)|^2) \leq 0.$$
This implies 
\begin{eqnarray*}
e^{- 2 \max_w \re (w \lambda (X))} \sum_w |\phi_w (\exp\, Z)|^2 & \leq & e^{- 2 \max_w \re (w \lambda (0))} \sum_w
|\phi_w (\exp\, (0 + i Y))|^2\\
& = & \sum_w |\phi_w (\exp\, (i Y))|^2
\end{eqnarray*}
if $X \in \g{a}^{\text{reg}}$, and by continuity this holds for all $X \in \g{a}$. Note that
$$|\phi_e (\exp\, Z)| = |G (\lambda, m, e^{-1} Z)| = |G (\lambda, m, Z)|$$
and $|\phi_e (\exp\, Z)|^2 \leq \sum_w |\phi_w (\exp\, Z)|^2$ which implies 
$|\phi_e (\exp\, Z)| \leq (\sum_w |\phi_w (\exp\, Z)|^2)^{1/2}$. Hence, we have
\begin{equation}
\label{eq41}
|G (\lambda, m, X + i Y)| \leq e^{\max_w \re (w \lambda (X))} \left(\sum_w |\phi_w (\exp\, (i Y))|^2\right)^{1/2}.
\end{equation}
Substituting $Y = 0$ yields
$$|G (\lambda, m, X)| \leq |W|^{1/2} e^{\max_w \re (w \lambda (X))},$$
where we use the fact that $G (\lambda, m, 0) = 1$ (cf. \cite[Theorem $3.15$]{op}).

\vskip3mm

Next, we take $Y \in \g{a}^{\mathrm{reg}}$ such that $|\alpha (Y)| \leq \pi - \eps$ for all $\alpha \in \Sigma$,
and $\eta \in \g{a}^{\mathrm{reg}}$ belonging to the same Weyl chamber, and let $\xi = i \eta$. Then 
$$\re (w \lambda (\xi)) = - \im (w \lambda (\eta))\qquad \text{and}\qquad\im (\alpha (\xi)) = \re (\alpha
(\eta)).$$ 
Take $\mu \in \{w \im \lambda\}_{w \in W}$ such that $- \im (w \lambda (\eta)) \leq - \mu (\eta)$ for all $w \in W$.
This is to say, $\mu = \min_w \im (w \lambda)$. Observe that
$$\sum_{\alpha \in \Sigma^+} |m_\alpha| |\alpha (\eta)| \leq \max_w \sum_{\alpha \in \Sigma^+} |m_\alpha| \alpha
(w \eta) = 2 \max_w (w \wt{\rho}, \eta).$$
We have
\begin{eqnarray*}
\left|\sum_{\alpha \in \Sigma^+, w} m_\alpha \mathrm{sgn} (w^{-1} \alpha) \im (\alpha (\xi)) \im
(\overline{\phi_w} \phi_{r_\alpha w})\right| & \leq & \sum_{\alpha \in \Sigma^+, w} |m_\alpha|\, |\alpha (\eta)| |\phi_w|
|\phi_{r_\alpha w}|\\
& \leq & 2 \max_w (w \wt{\rho}, \eta) \sum_w |\phi_w|^2.
\end{eqnarray*}
Choose $\nu \in \{w \wt{\rho}\}_{w \in W}$ such that $(\nu, \eta) = \max_w (w \wt{\rho}, \eta)$. Let 
$C > 2$ be a constant to be determined and let
$$H (i Y) = e^{2 \mu (Y)} e^{- C \nu (Y)} \sum_w |\phi_w (\exp\, (i Y))|^2\,.$$
Now using the formula (\ref{eq37}) we obtain 
\begin{align}
(\partial_\xi H) (&i Y) \nonumber \\
 =&  - \sum_{\alpha \in \Sigma^+, w} m_\alpha \frac{\alpha (\eta) \sin 2 \alpha (Y)}{|1 - e^{- 2 \alpha (i Y)}|^2} 
|\phi_w - \phi_{r_\alpha w}|^2 \cdot e^{(2 \mu - C \nu) (Y)}\tag{I}\\
&  - (C-2) (\nu, \eta) \sum_w |\phi_w|^2 e^{(2 \mu - C \nu) (Y)}\tag{II}\\
& + \left[\sum_{\alpha \in \Sigma^+, w} m_\alpha \mathrm{sgn} (w^{-1} \alpha) \im (\alpha (\xi)) \im
(\overline{\phi_w} \phi_{r_\alpha w}) - 2 (\nu, \eta) \sum_w |\phi_w|^2\right] e^{(2 \mu - C \nu)
(Y)}\tag{III}\\
& + \left[2 \sum_w (\mu - w \im \lambda) (\eta) |\phi_w|^2\right] e^{(2 \mu - C \nu) (Y)}, \tag{IV}
\end{align}
where (III) and (IV) are clearly nonpositive.
In the original proof in \cite{op} $\alpha (\eta)\, \sin 2\alpha (Y)\ge 0$ and $m_\alpha \ge 0$ for all $\alpha \in \Sigma^+$. 
Hence (I) is nonpositive. 
Then the author takes $C=2$ to let (II) vanish. In fact any $C\ge 2$ would be
good enough. Our assumptions allows for $\pi \le |2 \alpha (Y)|\le 2\pi - 2\epsilon$ which 
would imply that $\alpha (\eta)\, \sin 2\alpha (Y)\le 0$ and so (I)$\ge 0$. Similar
problems arise since $m_\alpha$ might be nonpositive for some $\alpha \in \Sigma^+$. Thus the rest of the proof differs from the original proof in \cite{op} by  
grouping (I) and (II) together and then choosing the constant $C$ such that the sum becomes
nonpositive. 

As earlier we set $m_{\alpha/2}=0$ if $\alpha/2$ is not a root. For (I) observe that 
\[\sum_{\alpha \in \Sigma^+,\, w \in W}\, m_\alpha\, \frac{\alpha\, (\eta)\, \sin\, 2\alpha\, (Y)}{|1 - e^{- 2\alpha\, (i\, Y)}|^2}\,
|\phi_w - \phi_{r_\alpha w}|^2 \cdot e^{(2 \mu - C \nu) (Y)}\]
\[=\sum_{\alpha \in \Sigma_\ast^+,\, w}\, \left[m_\alpha\, \frac{\alpha\, (\eta)\, \sin\, 2\alpha\, (Y)}{|1 - e^{- 2 \alpha\, (i\,
Y)}|^2} + \frac{1}{2}\, m_{\alpha/2}\, \frac{\alpha\, (\eta)\, \sin\, (\alpha\, (Y))}{|1 - e^{- \alpha\, (i\, Y)}|^2}\right]\,
|\phi_w - \phi_{r_\alpha w}|^2 \cdot e^{(2 \mu - C \nu) (Y)}.\]
Fix a root $\alpha\in\Sigma_\ast^+$. Let  $s_\alpha=s:=\alpha (Y) $. Then
$$|1 - e^{- 2 \alpha (i Y)}|^2 = 4 \sin^2 (s)\, .$$ 
Using that $m_\alpha +m_{\alpha/2} \ge 0$ and $m_\alpha \ge 0$ for all $\alpha \in \Sigma_\ast^+$ we get: 
\begin{eqnarray*}
m_\alpha \frac{\alpha (\eta)  \sin (2s)}{|1 - e^{- 2is }|^2} +\frac{m_{\alpha/2}}{2}
\frac{\alpha (\eta)  \sin (s)}{|1 - e^{- is }|^2} & \geq & m_\alpha \frac{\alpha (\eta) \sin (2s)}{4\, \sin^2\, (s)}
- \frac{1}{2} m_\alpha \frac{\alpha (\eta) \sin (s)}{4\, \sin^2\, (s/2)} \\ 
& = & \frac{m_\alpha \alpha (\eta)}{4} \left[\frac{\sin (2 s)}{\sin^2 (s)} -
\frac{1}{2} \frac{\sin (s)}{\sin^2 (s/2)}\right]\\
& = & - \frac{m_\alpha \alpha (\eta)}{4}\tan (s/2),
\end{eqnarray*}
where we use the formulas $\sin (2 s) = 2 \sin (s) \cos (s)$ and $\cos (2 s) = \cos^2 (s) - \sin^2 (s)$.
Since $\eta, Y$ belong to the same chamber, we see that $\alpha (\eta) \tan (s/2) \geq 0$. 
Since $|s| = |\alpha (Y)| \leq \pi - \eps$, then there is a constant $C_1 = C_1 (\eps) > 0$ depending on $\eps$ such that 
\[\max_{\alpha\in \Sigma_{\ast}^+}|\tan (s_\alpha /2)| \leq C_1\, .\]
Also,
\[\sum_{w \in W}\, |\phi_w - \phi_{r_\alpha\, w}|^2 \leq \sum_{w \in W}\, (|\phi_w| + |\phi_{r_\alpha\, w}|)^2 
= \sum_{w \in W}\, 4\, |\phi_w|^2.\]
It follows that  
\begin{eqnarray*}
\mathrm{(I)} & \leq & C_1 \sum_{\alpha \in \Sigma_\ast^+, \, w} m_\alpha |\alpha (\eta)| |\phi_w|^2
e^{(2 \mu - C \nu) (Y)}\\
& \leq & 2 C_1 \max_w (w \wt{\rho}, \eta) \sum_w |\phi_w|^2e^{(2 \mu - C\nu) (Y)}.
\end{eqnarray*}
Hence (I)+(II)$\leq 0$ if we take  $C \ge 2 + 2 C_1$ ($C$ thus depends on $\eps$). 
It follows that $(\partial_\xi\, H)\, (i\, Y) \leq 0$. Therefore, $H\, (i\, Y) \leq H\, (0) = |W|$. So 
$$\sum_w\, |\phi_w\, (\exp\, (i\, Y))|^2 \leq |W|\, e^{(C\, \nu - 2 \mu)\, (Y)}.$$ 
Together with (\ref{eq41}), we get
$$|G(\lambda,m, Z)| \leq |W|^{\frac{1}{2}} \exp (- \min_{w \in W} \im (w \lambda (Y)) + \frac{C}{2}\max_{w \in
W} w \wt{\rho} (Y) + \max_{w \in W} \re (w \lambda (X))).$$
But note that $|G(\lambda, m,Z)| = |F(\lambda , m;\, \exp Z)|$, we have therefore proved the desired estimate for $F$.
\end{proof}

Since $\wt{\rho}\, (\, \cdot\, )$ (as $\wt{\rho}$ is independent of $\lambda$) and $|W|$ are constants, we restate
Proposition \ref{pro1} as

\begin{proposition}
\label{hypfun:estim}
Let $m \in \mathcal{M}_{\ge}$. 
Let $\eps > 0$. Then there exists a constant $C = C_\eps$ such that 
$$|F(\lambda, m;\exp( X + i Y))| \leq C \exp (\max_{w \in W}\, \re w \lambda (X) - \min_{w \in W} \im
w \lambda (Y)),$$
for all $X \in \Omega_\eps$, $Y \in \g{b}$, and $\lambda \in \g{a}_{\C}^\ast$.
\end{proposition}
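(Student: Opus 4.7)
The plan is to deduce Proposition \ref{hypfun:estim} directly from Proposition \ref{pro1} by a change of variables using the identification $\mathfrak{a} = i\mathfrak{b}$, and then absorb the bounded $\tilde\rho$-dependent prefactor (along with $|W|^{1/2}$) into a single $\epsilon$-dependent constant. Since Proposition \ref{pro1} is already proved above, no further analytic work with the nonsymmetric hypergeometric system is needed.

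Concretely, given $X \in \Omega_\epsilon \subset \mathfrak{b}$ and $Y \in \mathfrak{b}$, I would rewrite the argument as $X + iY = X' + iY'$ with $X' := iY \in \mathfrak{a}$ and $Y' := -iX \in \mathfrak{a}$. This uses $i\mathfrak{b} = \mathfrak{a}$ in $\mathfrak{a}_\C$. Because every root $\alpha \in \Sigma$ is real on $\mathfrak{a}$ and purely imaginary on $\mathfrak{b}$, we have
\[
|\alpha(Y')| = |{-i\alpha(X)}| = |\alpha(X)| \leq \pi - \epsilon,
\]
which is exactly the hypothesis of Proposition \ref{pro1}. Applying that proposition with the pair $(X',Y')$ yields
\[
|F(\lambda, m; \exp(X+iY))| \leq |W|^{1/2}\exp\Bigl(-\min_w \im(w\lambda(Y')) + \tfrac{C}{2}\max_w w\tilde\rho(Y') + \max_w \re(w\lambda(X'))\Bigr).
\]

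The next step is to translate this exponent back into the $(X,Y)$ variables. For $\lambda \in \mathfrak{a}_\C^*$ extended $\C$-linearly, $\lambda(iY) = i\lambda(Y)$ and $\lambda(-iX) = -i\lambda(X)$, which give
\[
\re(w\lambda(X')) = \re(iw\lambda(Y)) = -\im(w\lambda(Y)), \qquad \im(w\lambda(Y')) = \im(-iw\lambda(X)) = -\re(w\lambda(X)).
\]
Thus the $\lambda$-dependent portion of the exponent collapses to $\max_w \re(w\lambda(X)) - \min_w \im(w\lambda(Y))$, as required. The remaining term $\tfrac{C}{2}\max_w w\tilde\rho(Y')$ depends only on $Y' = -iX$; since $\tilde\rho$ is a fixed real-linear form on $\mathfrak{a}$ and $|\alpha(Y')| \leq \pi - \epsilon$ for all $\alpha$, it is uniformly bounded by a quantity depending only on $\epsilon$ (and $m$). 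Together with the factor $|W|^{1/2}$, this bound defines the constant $C_\epsilon$.

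The only subtlety, and hence the main place to be careful, is the bookkeeping of factors of $i$ when passing between the $\mathfrak{a}$-presentation used in Proposition \ref{pro1} and the $\mathfrak{b}$-presentation used in Proposition \ref{hypfun:estim}. Once the identifications $X' = iY$, $Y' = -iX$ are adopted consistently and the reality/imaginarity of $\alpha$ on $\mathfrak{a}$ and $\mathfrak{b}$ is invoked, the estimate is an immediate corollary, with no further analysis required.
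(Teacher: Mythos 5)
Your proposal is correct and follows essentially the paper's own route: the paper obtains Proposition \ref{hypfun:estim} simply by restating Proposition \ref{pro1}, absorbing the factor $|W|^{1/2}$ and the uniformly bounded $\wt{\rho}$-term into the constant $C_\eps$. You merely make explicit the $\fa$-versus-$\fb$ bookkeeping (the substitution $X'=iY$, $Y'=-iX$ and the resulting exchange of $\re$ and $\im$ in the exponent) that the paper leaves implicit, and that bookkeeping is carried out correctly.
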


\end{document}